\newtheorem{theorem}{\sc Theorem}[section]
\newtheorem{proposition}[theorem]{\sc Proposition}
\newtheorem{lemma}[theorem]{\sc Lemma}
\newtheorem{corollary}[theorem]{\sc Corollary}
\theoremstyle{definition}
\theoremstyle{remark}
\newtheorem{remark}[theorem]{\sc Remark}
\newenvironment{invisible}{{\noindent\sc \colorbox{yellow}{Invisible:}\;}\color{gray}}{\medskip}
\newcommand{\Cc}{\mathcal{C}}
\newcommand{\Mm}{\mathcal{M}}
\newcommand{\Ff}{\mathcal{F}}
\newcommand{\Rr}{\mathcal{R}}
\newcommand{\ot}{\otimes}
\newcommand{\rd}[1]{{\color{blue}{#1}}}
\newcommand{\mg}[1]{{\color{magenta}{#1}}}
\definecolor{darkgreen}{RGB}{10,210,60}
\newcommand{\gr}[1]{{\color{darkgreen}{#1}}}
\newcommand{\op}{\mathrm{op}}
\newenvironment{sistema}%
{
\left\{\begin{aligned}}
{\end{aligned}
\right.
}
\begin{document}

\title[Infinitesimal $\Rr$-matrices for some families of Hopf algebras]{Infinitesimal $\Rr$-matrices for some families of Hopf algebras}

\author{Lucrezia Bottegoni, Fabio Renda, Andrea Sciandra}

\address{\parbox[b]{\linewidth}{University of Turin, Department of Mathematics ``G. Peano'', via
Carlo Alberto 10, I-10123 Torino, Italy}}
\email{lucrezia.bottegoni@edu.unito.it}

\address{\parbox[b]{\linewidth}{University of Ferrara, Department of Mathematics and Computer Science, Via Machiavelli 30, 44121 Ferrara, Italy}}
\email{fabio.renda@unife.it}

\address{\parbox[b]{\linewidth}{University of Turin, Department of Mathematics ``G. Peano'', via
Carlo Alberto 10, I-10123 Torino, Italy}}
\email{andrea.sciandra@unito.it}

\subjclass{Primary 16T05; Secondary 16E40}

\keywords{Quasitriangular bialgebras, Hopf algebras, infinitesimal $\Rr$-matrices, 2-cocycles}

\begin{abstract}
Given a  bialgebra $H$ such that the associated trivial topological bialgebra $H[[\hbar]]$ admits a quasitriangular structure $\tilde{\mathcal{R}}=\mathcal{R}(1\otimes 1+\hbar\chi+\mathcal{O}(\hbar^2))$, one gets a distinguished element $\chi \in H \otimes H$ which is an infinitesimal $\mathcal{R}$-matrix, according to the definition given in \cite{ABSW}. In this paper we classify infinitesimal $\mathcal{R}$-matrices for some families of well-known Hopf algebras, among which are the generalized Kac--Paljutkin Hopf algebras $H_{2n^2}$, the Radford Hopf algebras $H_{(r,n,q)}$, and the Hopf algebras $E(n)$. 
\end{abstract}

\maketitle

\tableofcontents

\section{Introduction}
Quasitriangular bialgebras, i.e. bialgebras together with a solution $\Rr$ of the quantum Yang–Baxter equation  \cite{Dr87}, have been widely studied in the literature.
Recently, in \cite{ABSW}, 
\emph{pre-Cartier} quasitriangular bialgebras $(H,\Rr, \chi)$ have been introduced. They are quasitriangular bialgebras $(H,\Rr)$ equipped with a so-called \emph{infinitesimal $\mathcal{R}$-matrix} $\chi\in H\otimes H$, whose defining axioms involve the given quasitriangular structure $\Rr$. The motivating example is provided by the topological bialgebra: given a bialgebra $H$ and a quasitriangular structure $\tilde{\mathcal{R}}=\mathcal{R}(1\otimes 1+\hbar\chi+\mathcal{O}(\hbar^2))$ on the trivial topological bialgebra $H[[\hbar]]$ of formal power series in a parameter $\hbar$, by reading the axioms of $\tilde{\mathcal{R}}$ in the first order of $\hbar$ one gets precisely the defining axioms of the infinitesimal $\mathcal{R}$-matrix $\chi$. 
Among their main properties, infinitesimal $\Rr$-matrices are $2$-cocycles in the cohomology for bialgebras as in \cite[XVIII.5]{Ka} and satisfy an infinitesimal quantum Yang–Baxter equation, see \cite[Theorem 2.21]{ABSW}. 
As shown in \cite{ABSW}, examples of infinitesimal $\Rr$-matrices arise for instance on Sweedler's Hopf algebra, quantum $2\times 2$-matrices, $GL_q(2)$, and via Drinfel'd twist deformation. In this paper, we classify infinitesimal $\Rr$-matrices for some known families of quasitriangular Hopf algebras, such as the generalized Kac--Paljutkin Hopf algebras $H_{2n^2}$ \cite{Kac,Masuoka,P19}, the Radford Hopf algebras $H_{(r,n,q)}$  \cite{Rad}, and the Hopf algebras $E(n)$ \cite{BDG, CD}.

We begin our investigation by looking at the class of Hopf algebras of dimension $8$. In fact, 
according to the classification proven in \cite[Theorem 3.5]{St2} for Hopf algebras of dimension up to $11$ over an algebraically closed field of characteristic $0$, the only ones admitting a quasitriangular structure are Sweedler's Hopf algebra, 
group algebras, and a list of few $8$-dimensional Hopf algebras. 
Among the last-mentioned, the only semisimple Hopf algebra is the \emph{Kac--Paljutkin Hopf algebra} $H_8$ defined in \cite{KP}, whose quasitriangular structures are described in \cite{Wa3}. A class of semisimple $2n^2$-dimensional Hopf algebras $H_{2n^2}$ was introduced in \cite{Kac} as a generalization of $H_8$. They are quasitriangular as well, see \cite{ZL}. In Theorem \ref{prop:H2n2} and Theorem \ref{thm:H8} we prove that $(H_{2n^2}, \mathcal{R})$ has no non-zero infinitesimal $\Rr$-matrices for every $n \geq 2$. 

With respect to the class of quasitriangular non-semisimple Hopf algebras of dimension $8$, we first classify infinitesimal $\Rr$-matrices for the pointed quasitriangular Hopf algebras $((A''_{C_{4}})^{*},\Rr)$ (Proposition \ref{prop:infinitesimalA''C4}) and $(A_{C_{2}\times C_{2}},\Rr_a)$ (Proposition \ref{prop:chiAC2xC2}), and then in Proposition \ref{AC2-n} we extend the classification obtained for $A_{C_{2}\times C_{2}}$ to $A_{C_{2}^n}$. 
Instead of classifying infinitesimal $\Rr$-matrices just for the remaining non-semisimple $8$-dimensional quasitriangular Hopf algebras $(E(2),R_A)$ and $(H_{(2,2,q)},\Rr_{s,\beta})$, we deal with the general cases and in Subsections \ref{radfordalgebras} and \ref{E(n)algebras} we determine the pre-Cartier structures of $(H_{(r,n,q)},\Rr)$, and $(E(n),R_A)$, respectively. When $r=1$, Radford Hopf algebras $H_{(1,n,q)}$ coincide with Taft Hopf algebras and they are not quasitriangular for $n>2$, see  \cite{Ge2}. On the other hand, when $r \geq 2$, some of these algebras admit a quasitriangular structure \cite{Rad}. In Theorem \ref{prop:Rad} we prove that $\chi=0$ is the only infinitesimal $\Rr$-matrix for any quasitriangular Radford algebra $(H_{(r,n,q)},\Rr)$.

We then consider the family of Hopf algebras $E(n)$ over a field $\Bbbk$ of characteristic $\mathrm{char}(\Bbbk)\neq 2$ \cite{BDG, CD}. This is the most relevant example considered in our paper as it provides us with an entire class of infinitesimal $\Rr$-matrices. In order to classify all these structures for $E(n)$, we first give a classification of its $2$-cocycles. To this aim we make use of some results on cohomology of smash products. In particular, since $E(n)\cong R\#\Bbbk G$ \--- where $R=\Bbbk\langle x_{P}\ |\ P\subseteq\{1,...,n\}\rangle$ and $G=\{1,g\}$ \--- we are able to determine the dimension of $\mathrm{H}^2(E(n), \Bbbk)$ and explicitly give a complement of $\mathrm{B}^2(E(n),\Bbbk)$ in $\mathrm{Z}^2(E(n),\Bbbk)$ (Theorem \ref{prop:2cocycle}). 
In Theorem \ref{prop:class-chi}, we prove that the $E(n)$'s admit an exhaustive $n^2$-dimensional space of infinitesimal $\Rr$-matrices given by
	\[
		\chi=\sum_{p,q=1}^n \gamma_{pq}gx_p \otimes x_q,
\]
and that they can be regarded as Cartier bialgebras when equipped with infinitesimal $\Rr$-matrices whose associated square matrix of coefficients is anti-symmetric. Moreover, in Proposition \ref{prop:2cob} we show that $E(n)$ is Cartier if, and only if, its infinitesimal $\Rr$-matrix is a $2$-coboundary. Finally, we give an affirmative answer to the quantization problem highlighted in \cite[Question 2.10]{ABSW}  for $E(n)$ and $A_{C_{2}^n}$.
\par

\medskip

The paper is organized as follows. In Section \ref{sub:Hoch} we recall some basics on pre-Cartier bialgebras and 2-cocycles for the cohomology described in \cite[XVIII.5]{Ka}. In Section \ref{sec:theory} we further investigate properties of infinitesimal $\Rr$-matrices in the Hopf algebra case. In particular, we point out that the 2-cocycle condition can replace either axiom \eqref{cqtr2} or axiom \eqref{cqtr3} in the definition of an infinitesimal $\Rr$-matrix (Proposition \ref{prop:newcharacterization}) and we prove that the Cartier condition is independent from the universal $\Rr$-matrix (Proposition \ref{cor:SSRchi}), which in turn will yield a necessary and sufficient condition for the infinitesimal $\Rr$-matrices of a quasitriangular Hopf algebra which are 2-coboundaries to be Cartier (Corollary \ref{corCartiercob}). Section \ref{sect:infinit} is devoted to the classification of infinitesimal $\Rr$-matrices for the aforementioned families of Hopf algebras. In Section \ref{sect:quant} we treat the quantization problem for the Hopf algebras considered in Section \ref{sect:infinit} that admit a non-zero pre-Cartier structure. 

\medskip

\noindent\textit{Notations and conventions}. All vector spaces are understood to be over a fixed field $\Bbbk$ and by linear maps we mean $\Bbbk$-linear maps. The unadorned tensor product $\otimes$ stands for $\otimes_{\Bbbk}$. We denote by $\tau$ the canonical flip $\tau_{X,Y}: X\otimes Y\to Y\otimes X$, $x\otimes y\mapsto y\otimes x$, for vector spaces $X,Y$. All linear maps whose domain is a tensor product will usually be defined on generators and understood to be extended by linearity. Algebras over $\Bbbk$ will be associative and unital and coalgebras over $\Bbbk$ will be coassociative and counital.
For an algebra the multiplication and the unit are denoted by $m$ and $u$, respectively, while for a coalgebra the comultiplication and the counit are denoted by $\Delta$ and $\varepsilon$, respectively. We write $H$ for a bialgebra over $\Bbbk$. In case $H$ admits an antipode, it is denoted by $S:H\to H$. We use the classical Sweedler’s notation 
and we shall write $\Delta(h)= h_{1}\otimes h_{2}$ for any $h\in H$, where we omit the summation symbol, and $\Delta^{\mathrm{op}}=\tau\Delta$. The cyclic group with $n$ elements will be denoted by $C_n$. The notation $C_n^m$ will be used to denote the direct product $C_n \times \dots \times C_n$ with $m$ factors. For an element $T=\sum_i T^i \otimes T_i \in H \otimes H$, we will also adopt the standard notation $T_{12}=\sum_i T^i  \otimes T_i \otimes 1=T \otimes 1$, $T_{23}=\sum_i 1 \otimes T^i  \otimes T_i =1 \otimes T$, $T_{13}=\sum_i T^i \otimes 1 \otimes T_i$, $T^{\mathrm{op}}=\tau_{H,H}(T)=T_i\otimes T^i$.

\section{Preliminaries on pre-Cartier bialgebras}
\label{sub:Hoch}
In this section we remind from \cite{ABSW} some preliminary definitions and results concerning infinitesimal $\Rr$-matrices.\medskip

\noindent\textbf{Pre-Cartier 
bialgebras.}
Let us recall (see e.g. \cite[Ch. $2$]{Majid-book}) that a bialgebra $H$ is called \textit{quasitriangular} if there is an invertible element $\Rr=\Rr^i \otimes \Rr_i \in H\otimes H$ \--- called the \textit{universal $\Rr$-matrix} or \textit{quasitriangular structure} \--- such that $H$ is quasi-cocommutative, i.e.
\begin{equation}\label{qtr1}
    \Delta^\mathrm{op}(\cdot)=\Rr\Delta(\cdot)\Rr^{-1},
\end{equation}
and the hexagon equations
\begin{align}
    (\mathrm{Id}_H\otimes\Delta)(\Rr)&=\Rr_{13}\Rr_{12},\label{qtr2}\\
    (\Delta\otimes\mathrm{Id}_H)(\Rr)&=\Rr_{13}\Rr_{23},\label{qtr3}
\end{align}
are satisfied. If in addition $\Rr^{-1}=\Rr^\op$, then $(H,\Rr)$ is called \textit{triangular}.

We also recall that $(\varepsilon\ot\mathrm{Id})(\Rr)=1_{H}=(\mathrm{Id}\ot\varepsilon)(\Rr)$. Moreover, if $H$ is a Hopf algebra, the following equalities are satisfied:
\begin{align}
    (S\ot\mathrm{Id})(\Rr)&=\Rr^{-1}\label{eq:antipodeRinv}\\
    (\mathrm{Id}\ot S)(\Rr^{-1})&=\Rr\label{eq:antipodeR}
\end{align}
and hence $(S\ot S)(\Rr)=\Rr$, $(S\ot S)(\Rr^{-1})=\Rr^{-1}$, see e.g. \cite[Lemma 2.1.2]{Majid-book}.

It is known (see \cite[Theorem 9.2.4 and paragraph thereafter]{Majid-book}) that a bialgebra $H$ is quasitriangular if and only if the monoidal category ${}_H\Mm$ of left $H$-modules is braided, with triangular structures corresponding to ${}_{H}\Mm$ being symmetric.

A (quasi)triangular bialgebra $(H,\Rr)$ is said to be \emph{pre-Cartier} \cite[Definition 2.1]{ABSW} if there is an element $\chi\in H\otimes H$ such that
\begin{align}
    \chi\Delta(\cdot)&=\Delta(\cdot)\chi\label{cqtr1}\\
    (\mathrm{Id}_H\otimes\Delta)(\chi)&=\chi_{12}+\Rr^{-1}_{12}\chi_{13}\Rr_{12}\label{cqtr2}\\
    (\Delta\otimes\mathrm{Id}_H)
    (\chi)&=\chi_{23}+\Rr^{-1}_{23}\chi_{13}\Rr_{23}\label{cqtr3}
\end{align}
hold. The element $\chi$ is called an \emph{infinitesimal $\Rr$-matrix}. A (quasi)triangular bialgebra $(H,\Rr)$ is \emph{Cartier} if it is pre-Cartier and the corresponding infinitesimal $\Rr$-matrix $\chi$ satisfies
\begin{equation}\label{eq:ctr2}
    \mathcal{R}\chi=\chi^\mathrm{op}\mathcal{R}
\end{equation}
in addition. By \cite[Remark 2.2, (iii)]{ABSW} the following equalities
    \begin{align}
(\mathrm{Id}\otimes \varepsilon)(\chi)&=0,\label{eq:eps-chi1}\\
(\varepsilon\otimes\mathrm{Id})(\chi)&=0,  \label{eq:eps-chi2}
\end{align} hold true for every infinitesimal $\Rr$-matrix $\chi$. In \cite[Theorem 2.6]{ABSW} it is shown that the existence of an infinitesimal $\Rr$-matrix has a categorical interpretation: a quasitriangular bialgebra $(H,\Rr)$ is (pre-)Cartier if, and only if, the category $_{H}\Mm$ is braided (pre-)Cartier in the sense of \cite[Definition 1.1]{ABSW}, i.e. it is a braided monoidal category equipped with an \textit{infinitesimal braiding}. 
The same bijective correspondence holds if one considers (pre-)Cartier triangular bialgebras $H$ and symmetric (pre-)Cartier structures on ${}_H\mathcal{M}$. \medskip

\noindent\textbf{2-cocycles. }
Infinitesimal $\Rr$-matrices also happen to be related to Homology Theory. In \cite[Theorem 2.21]{ABSW} it is proven that any infinitesimal $\Rr$-matrix is also a $2$-cocycle in the cohomology for coalgebras defined as in \cite[XVIII.5]{Ka} (sometimes also called Hochschild cohomology for coalgebras in the finite-dimensional case, see e.g. \cite[page 44]{Doi}). We will use this fact for the classification of infinitesimal $\Rr$-matrices in some of our examples. 

Consider a bialgebra $(H, m, u,\Delta,\varepsilon)$ and regard the base field $\Bbbk $ as an $H$-bicomodule via the unit $u$.
Then, one can consider the cobar complex of $H$:
$$\xymatrix{\Bbbk\ar[r]^-{b^0}& H\ar[r]^-{b^1}& H\otimes H\ar[r]^-{b^2}& H\otimes H\otimes H\ar[r]^-{b^3}&\cdots }$$
where the differential $b^n:H^{\otimes n}\to H^{\otimes (n+1)}$ is given by $b^n=\sum_{i=0}^{n+1}(-1)^i\delta_n^i$ and $\delta^i_n : H^{\otimes n}\to H^{\otimes (n+1)}$ are the linear maps
\[
\delta^i_n(x_1\otimes \cdots\otimes x_n) = \begin{cases}
1\otimes x_1\otimes\cdots\otimes x_n, \text{ if } i=0\\
x_1\otimes\cdots\otimes x_{i-1}\otimes\Delta(x_i)\otimes x_{i+1}\otimes\cdots\otimes x_n, \text{ if } 1\leq i \leq n\\
x_1\otimes \cdots\otimes x_{n}\otimes 1, \text{ if } i=n+1.
\end{cases}
\]
Here, if $n=0$, we set $H^{\otimes 0}=\Bbbk$ and $\delta^0_0 (1_{\Bbbk})=\delta^1_0(1_{\Bbbk})=1_{H}$. For instance, for small values of $n$, we have $b^0(k)=0$ for $k\in\Bbbk$, $b^1(x)=1\otimes x-\Delta(x)+x\otimes 1$ for $x\in H$, $b^2(x\otimes y)=1\otimes x\otimes y-(\Delta\otimes\mathrm{Id}_H)(x\otimes y)+(\mathrm{Id}_H\otimes \Delta)(x\otimes y)-x\otimes y\otimes 1$, for $x\otimes y\in H\otimes H$, and so on. 

The elements in $\mathrm{Z}^n(H,\Bbbk):=\mathrm{Ker}(b^n)$ are the \textit{$n$-cocycles}, while the elements in $\mathrm{B}^n(H,\Bbbk):=\mathrm{Im}(b^{n-1})$ are the \textit{$n$-coboundaries}. The $n$-th cohomology group is $\mathrm{H}^n( H,\Bbbk)=\frac{\mathrm{Z}^n(H,\Bbbk)}{\mathrm{B}^n(H,\Bbbk)}$, for $n\geq1$. 
Observe that $\mathrm{Z}^{1}(H, \Bbbk)=P(H)$, the space of primitive elements of $H$, while $\mathrm{B}^{1}(H,\Bbbk)=\{0\}$, so $\mathrm{H}^{1}(H,\Bbbk)\cong P(H)$. Moreover, a 2-cocycle is an element $\chi\in H\otimes H$ such that
\begin{equation}\label{eq:Hoch2cocy}
\chi_{12}+(\Delta\otimes\mathrm{Id})(\chi)=\chi_{23}+(\mathrm{Id}\otimes\Delta)(\chi).
\end{equation}

In the next section we provide some results regarding the infinitesimal $\Rr$-matrices of a pre-Cartier quasitriangular Hopf algebra, that will be useful in the following.

\section{Some results on infinitesimal $\Rr$-matrices for Hopf algebras}\label{sec:theory}
We start by proving an equivalent characterization of infinitesimal $\Rr$-matrices for bialgebras, which highlights the fact that they are naturally $2$-cocycles in the cohomology for coalgebras described above.

\begin{proposition}\label{prop:newcharacterization}
An element $\chi \in H \otimes H$ is an infinitesimal $\Rr$-matrix for the quasitriangular bialgebra $(H,\Rr)$ if and only if it is a $2$-cocycle satisfying \eqref{cqtr1} and either \eqref{cqtr2} or \eqref{cqtr3} holds.
\end{proposition}

\begin{proof}
If $(H,\Rr,\chi)$ is a pre-Cartier quasitriangular bialgebra, then \eqref{cqtr1}-\eqref{cqtr3} hold and $\chi$ is a $2$-cocycle, by \cite[Theorem 2.21]{ABSW}.

Now suppose that $\chi \in H \otimes H$ is a $2$-cocycle satisfying \eqref{cqtr1} and \eqref{cqtr2}. We want to prove that \eqref{cqtr3} holds. We have
\begin{align*}
(\Delta\otimes\mathrm{Id}_H)
    (\chi)&\overset{\eqref{eq:Hoch2cocy}}{=}\chi_{23}+(\mathrm{Id}_H \otimes \Delta) (\chi)-\chi_{12}\\
    &\overset{\eqref{qtr1}}{=}\chi_{23}+\Rr_{23}^{-1}(\mathrm{Id}_H \otimes \Delta^{\mathrm{op}}) (\chi)\Rr_{23}-\chi_{12}\\
    &\overset{\eqref{cqtr2}}{=}\chi_{23}+\Rr_{23}^{-1}(\chi_{13}+\Rr^{-1}_{13}\chi_{12}\Rr_{13})\Rr_{23}-\chi_{12}\\
&=\chi_{23}+\Rr_{23}^{-1}\chi_{13}\Rr_{23}+\Rr_{23}^{-1}\Rr^{-1}_{13}\chi_{12}\Rr_{13}\Rr_{23}-\chi_{12}\\
     &\overset{\eqref{qtr3}}{=}\chi_{23}+\Rr_{23}^{-1}\chi_{13}\Rr_{23}+(\Delta \ot \mathrm{Id}_H)(\Rr^{-1})\chi_{12}(\Delta \ot \mathrm{Id}_H)(\Rr)-\chi_{12}\\
    &\overset{\eqref{cqtr1}}{=}\chi_{23}+\Rr_{23}^{-1}\chi_{13}\Rr_{23}+(\Delta \ot \mathrm{Id}_H)(\Rr^{-1})(\Delta \ot \mathrm{Id}_H)(\Rr)\chi_{12}-\chi_{12}\\
&=\chi_{23}+\Rr_{23}^{-1}\chi_{13}\Rr_{23}.
\end{align*}
Similarly, one can show that if $\chi \in H \ot H$ is a $2$-cocycle satisfying \eqref{cqtr1} and \eqref{cqtr3}, then $\chi$ also satisfies \eqref{cqtr2} and therefore $(H,\Rr, \chi)$ is a pre-Cartier quasitriangular bialgebra.
\end{proof}

\begin{remark}\label{rmk:Drinfeldtwist}
    By \cite[Theorem 2.17]{ABSW} and \cite[Remark 2.18]{ABSW} 
    with $\Ff=\Rr$ one obtains that 
    $(H,\Rr,\chi)$ is a (pre-)Cartier quasitriangular bialgebra if and only if $(H^{\mathrm{cop}},\Rr^{\mathrm{op}},\Rr\chi\Rr^{-1})$ is a (pre-)Cartier quasitriangular bialgebra, where $H^{\mathrm{cop}}$ is $H$ with opposite coproduct. Moreover, if $H$ is a Hopf algebra with antipode $S$, which is bijective by \cite[Proposition 1]{Radfordantipode}, then $H^{\mathrm{cop}}$ has antipode $S^{-1}$ (see e.g. \cite[Corollary III.3.5]{Ka}). Hence $(H,\Rr,\chi)$ is a (pre-)Cartier quasitriangular Hopf algebra if and only if $(H^{\mathrm{cop}},\Rr^{\mathrm{op}},\Rr\chi\Rr^{-1})$ is a (pre-)Cartier quasitriangular Hopf algebra.
\end{remark}

Now we focus on the product $\Rr\chi\Rr^{-1}$ in case $(H,\Rr,\chi)$ is a pre-Cartier quasitriangular Hopf algebra.
We will prove that in this case the Cartier property \eqref{eq:ctr2} can be restated without using the quasitriangular structure.

\begin{lemma}
    Let $(H,\Rr,\chi)$ be a pre-Cartier quasitriangular Hopf algebra. Then, the following equalities
\begin{align}
\chi\Rr^{-1}
=-(S \otimes \mathrm{Id}_H)(\Rr\chi)\label{eq:chi}\\
\Rr\chi
=-( \mathrm{Id}_H \otimes S)(\chi \Rr^{-1})\label{eq:chi2}
\end{align}
are satisfied.
\end{lemma}

\begin{proof}
    We compute
\[
0\overset{\eqref{eq:eps-chi2}}{=}1_{H}\varepsilon(\chi^{i})\ot\chi_{i}=S(\chi^{i}_{1})\chi^{i}_{2}\ot\chi_{i}\overset{\eqref{cqtr3}}{=}S(1)\chi^i\ot\chi_i+S(\chi^i)\overline{\Rr}^j\Rr^k\ot\overline{\Rr}_j\chi_i\Rr_k=\chi+(S(\chi^i)\overline{\Rr}^j\ot\overline{\Rr}_j\chi_i)\Rr.
\]
Then, we obtain 
\[
\chi\Rr^{-1}=-S(\chi^i)\overline{\Rr}^j\ot\overline{\Rr}_j\chi_i\overset{\eqref{eq:antipodeRinv}}{=}-S(\chi^i)S(\Rr^j)\ot\Rr_j\chi_i=-S(\Rr^{j}\chi^{i})\ot\Rr_{j}\chi_{i}=-(S \otimes \mathrm{Id}_H)(\Rr\chi),
\]
so we get \eqref{eq:chi}. Similarly, we find
\[
0\overset{\eqref{eq:eps-chi1}}{=}\chi^{i}\ot\varepsilon(\chi_{i})1_{H}=\chi^{i}\ot \chi_{i_{1}}S(\chi_{i_{2}})\overset{\eqref{cqtr2}}{=}\chi^i\ot\chi_iS(1)+\overline{\Rr}^j\chi^i\Rr^k \ot\overline{\Rr}_j\Rr_k S(\chi_i)=\chi+\Rr^{-1}(\chi^i\Rr^k\ot\Rr_kS(\chi_i)).
\]
Therefore, we get 
\[
\Rr\chi=-\chi^i\Rr^k\ot\Rr_kS(\chi_i)\overset{\eqref{eq:antipodeR}}{=}-\chi^{i}\overline{\Rr}^{k}\ot S(\overline{\Rr}_{k})S(\chi_{i})=-\chi^{i}\overline{\Rr}^{k}\ot S(\chi_{i}\overline{\Rr}_{k})=-(\mathrm{Id}_{H}\ot S)(\chi\Rr^{-1}),
\]
so  \eqref{eq:chi2} holds true.
\end{proof}
\begin{proposition}\label{cor:SSRchi}
In any pre-Cartier quasitriangular Hopf algebra $(H,\Rr,\chi)$ the following equalities
\begin{align}
(S \otimes S)(\chi)&=\Rr\chi\Rr^{-1}\label{eq:RchiRinv}\\
(S^2 \otimes S^2)(\chi)&=\chi\label{eq:S^2chi}
\end{align}
hold. As a consequence, a pre-Cartier quasitriangular Hopf algebra $(H,\Rr,\chi)$ is Cartier if and only if 
\begin{equation}\label{eq:HopfCartier}
\tau(S \otimes S)(\chi)=\chi.   
\end{equation}
\end{proposition}

\begin{proof}
We compute $(S\ot S)(\Rr\chi)\overset{\eqref{eq:chi}}{=}-(\mathrm{Id}_H\ot S)(\chi\Rr^{-1})\overset{\eqref{eq:chi2}}{=}\Rr\chi$. Therefore, recalling \eqref{eq:antipodeRinv} and \eqref{eq:antipodeR}, we obtain 
\[
\Rr\chi\Rr^{-1}=(S\ot S)(\Rr\chi)\Rr^{-1}=(S\ot S)(\Rr\chi)(S\ot S)(\Rr^{-1})=(S\ot S)(\Rr^{-1}\Rr\chi)=(S\ot S)(\chi),
\]
so we get \eqref{eq:RchiRinv}. Furthermore
\[(S^2 \ot S^2)(\chi)\overset{\eqref{eq:RchiRinv}}{=}(S \ot S)(\Rr\chi\Rr^{-1})=(S \ot S)(\Rr^{-1})(S \ot S)(\chi)(S \ot S)(\Rr) \overset{ \eqref{eq:RchiRinv}}{=}\Rr^{-1}\Rr\chi\Rr^{-1}\Rr=\chi,\]
so \eqref{eq:S^2chi} is satisfied.
\end{proof}

\begin{remark}
Given the \textit{Drinfel'd element} $u:=S(\Rr_i)\Rr^i$ of $(H,\Rr)$, since $S^{2}(a)=uau^{-1}$ for any $a\in H$, by \cite[Theorem 1]{Radfordantipode}, equality \eqref{eq:S^2chi} can also be restated as $(u \otimes u)\chi=\chi(u \otimes u)$.
\end{remark}

As a byproduct of Remark \ref{rmk:Drinfeldtwist} and Proposition \ref{cor:SSRchi}, we obtain the following result:

\begin{corollary}\label{cor:infinitesimalcop}
    Let $(H,\Rr)$ be a quasitriangular Hopf algebra. Then, $(H,\Rr,\chi)$ is (pre-)Cartier if and only if $(H^{\mathrm{cop}},\Rr^{\mathrm{op}},(S\ot S)(\chi))$ is (pre-)Cartier.
\end{corollary}

Proposition \ref{cor:SSRchi} allows us to obtain further information on the \textit{Casimir element} $\gamma:=S(\chi^{i})\chi_{i}$, see \cite[p. 26]{ABSW}. 

\begin{corollary}
 Let $(H,\Rr,\chi)$ be a pre-Cartier quasitriangular Hopf algebra. Then: 
 \begin{enumerate}
     \item[1)] $\gamma=\chi^iS(\chi_i)$,
     \item[2)] $S(\gamma)-\gamma$ is a primitive element. As a consequence, $S^{2}(\gamma)=\gamma$.
 \end{enumerate}
\end{corollary}
\begin{proof}
In view of \eqref{eq:RchiRinv} we have
\[S(\chi^i)\Rr^j \otimes S(\chi_i)\Rr_j = \Rr^j\chi^i \otimes \Rr_j\chi_i.\]
If we apply  $m(\mathrm{Id}_{H}\ot S)$ on both sides we find
\[S(\chi^i)\Rr^j S(\Rr_j)S^2(\chi_i) = \Rr^j\chi^iS(\chi_i)S(\Rr_j).\]
Since $S(u)=\Rr^{j}S(\Rr_{j})$ and $S^{2}(a)=uau^{-1}$ for all $a\in H$, the latter equality is clearly equivalent to $S(\chi^i)S(u)u\chi_i u^{-1} = \Rr^j\gamma'S(\Rr_j)$. The element $\chi^iS(\chi_i)$ is central 
\cite[p. 26]{ABSW} 
 and the element $S(u)u=uS(u)$ is as well (see \cite[Corollary VIII.4.2]{Ka} or \cite[Corollary 2.1.9]{Majid-book}), thus we get that
$\gamma S(u)uu^{-1} = \chi^iS(\chi_i)S(u)$, hence $\gamma=\chi^iS(\chi_i)$.

Moreover, we have
\begin{align*}
    b^1(S(\gamma))&=1_H \otimes S(\gamma)+ S(\gamma) \otimes 1_H-\Delta(S(\gamma))=\tau(S \otimes S)[\gamma \otimes 1_H+1_H \otimes \gamma -\Delta(\gamma)]\\
    &=\tau(S \otimes S)(b^1(\gamma))\overset{(\star)}{=}\tau(S \otimes S)[\chi +\tau(S \otimes S)(\chi)]=\tau(S \otimes S)(\chi) +(S^2 \otimes S^2)(\chi)\\
   \overset{\eqref{eq:S^2chi}}&{=}\tau(S \otimes S)(\chi) +\chi\overset{(\star)}{=}b^1(\gamma),
\end{align*}
where $(\star)$  uses that $b^{1}(\gamma)=\chi+\tau(S\ot S)(\chi)$ by \cite[Proposition 2.29]{ABSW}. As a consequence, $S(\gamma)-\gamma$ is a primitive element and then $S(S(\gamma)-\gamma)=-(S(\gamma)-\gamma)$, so $S^{2}(\gamma)=\gamma$.
\end{proof}

In \cite[Theorem 2.31]{ABSW}, it was shown that, given a pre-Cartier triangular Hopf algebra $(H,\Rr,\chi)$ (and $\mathrm{char}(\Bbbk)\not=2$), if $(H,\Rr,\chi)$ is Cartier then $\chi$ is a 2-coboundary. This was obtained observing that $b^1(\gamma)=\chi+\tau(S\ot S)(\chi)$ by \cite[Proposition 2.29]{ABSW} and $\tau(S\otimes S)(\chi)=\chi$ by \cite[Lemma 2.28]{ABSW}. Proposition \ref{cor:SSRchi} ensures that this is true even in the quasitriangular case. We can further extend this result providing a necessary and sufficient condition for the infinitesimal $\Rr$-matrices of a quasitriangular Hopf algebra which are 2-coboundaries to be Cartier.

\begin{corollary}\label{corCartiercob}
    Let $(H,\Rr,\chi)$ be a pre-Cartier quasitriangular Hopf algebra, where $\chi=b^1(a)$ for $a\in H$. Then $(H,\Rr,\chi)$ is Cartier if and only if $a-S(a)$ is a primitive element.
\end{corollary}

\begin{proof}
Since $(H,\Rr,\chi)$ is a pre-Cartier quasitriangular Hopf algebra, by Proposition \ref{cor:SSRchi}, we know that $(H,\Rr,\chi)$ is Cartier if and only if $\tau(S\ot S)(\chi)=\chi$. Since $\chi=b^1(a)$, we have
\[
\tau(S\ot S)(\chi)=\tau(S\ot S)(1\ot a-\Delta(a)+a\ot 1)=S(a)\ot1-\Delta(S(a))+1\ot S(a)=b^1(S(a)),
\]
hence $(H,\Rr,\chi)$ is Cartier if and only if $b^1(S(a))=b^1(a)$.
\end{proof}

\begin{remark}\label{Cartiercob}
If $\dim_{\Bbbk} H < \infty$ and $\mathrm{char}(\Bbbk)=0$, then any primitive element of $H$ is zero (see, for instance, \cite[p. 294]{Radfordprimitive}). In this case, the pre-Cartier quasitriangular Hopf algebra $(H,\Rr, \chi=b^1(a))$ is Cartier if and only if $a=S(a)$.
\end{remark}

\begin{proposition}
Let $(H,\Rr,\chi)$ be a pre-Cartier quasitriangular Hopf algebra. If  $\chi=b^1(a)$, then $\varepsilon(a)=0$ and $a S(a)=S(a)a$. If $(H,\Rr,\chi)$ is Cartier we also have that $S^2(a)=a$.
\end{proposition}

\begin{proof}
By \eqref{eq:eps-chi1}, we have $\varepsilon(a)=0$. Recalling that $\gamma=S(\chi^{i})\chi_{i}=S(1)a-\varepsilon(a)1+S(a)=a+S(a)$ is central (see \cite[p. 26]{ABSW}), we obtain $a^{2}+S(a)a=(a+S(a))a=a(a+S(a))=a^{2}+aS(a)$. Then $aS(a)=S(a)a$.

If $(H,\Rr,\chi=b^1(a))$ is Cartier, then $a-S(a)$ must be a primitive element by Corollary \ref{corCartiercob}  and therefore $S(a-S(a))=-(a-S(a))$. This leads to $S^2(a)=a$. 
\end{proof}

\begin{remark}
We observe that $S^2(a)=a$ is also equivalent to $au=ua$, since $S^{2}(a)=uau^{-1}$ by \cite[Theorem 1]{Radfordantipode}. Moreover, $S^{2}(a)=a$ is also equivalent to $S^{4}(a)=S^{2}(a)$ and, since $S^{4}(a)=uS(u^{-1})aS(u^{-1})^{-1}u^{-1}$ (see e.g. \cite[Proposition 12.3.2 (c)]{Rad3}), this is equivalent to $S(u^{-1})a=aS(u^{-1})$ (hence to $aS(u)=S(u)a$). In particular, $S^{2}(a)=a$ implies that $a\mathsf{g}=\mathsf{g}a$, where $\mathsf{g}:=uS(u^{-1})$ is a group-like element of $H$ (see e.g. \cite[Proposition 12.3.2 (c)]{Rad3}).
\end{remark}

In the next section we classify infinitesimal $\Rr$-matrices for some relevant families of Hopf algebras.

\section{Infinitesimal $\Rr$-matrices for some well-known Hopf algebras}\label{sect:infinit}
We start with Hopf algebras of low dimension. In \cite[Theorem 3.5]{St2} Hopf algebras of dimension up to 11 over an algebraically closed field of characteristic 0 are completely classified. Let us observe that, if we consider Hopf algebras of dimension different from 8, we only have group algebras and the dual $(\Bbbk G)^*$ of group algebras with $G$ nonabelian,  Sweedler's Hopf algebra of dimension 4 and the Taft algebra of dimension 9. Group algebras $\Bbbk G$, with $G$ abelian, have only infinitesimal $\Rr$-matrix $\chi=0$, as it was proved in \cite[Example 2.3 (2)]{ABSW}. 
For any finite nonabelian group $G$, the dual $(\Bbbk G)^*$ has no quasitriangular
structure, see \cite[Proposition 117]{CMZ02}. Infinitesimal $\Rr$-matrices for Sweedler's Hopf algebra are classified in \cite[Proposition 2.7]{ABSW} while, as it is shown in \cite{Ge2}, Taft Hopf algebras of dimension greater than $4$ do not admit a quasitriangular structure. Therefore we will focus on the class of Hopf algebras of dimension 8. Then, we will move our attention to some families of known Hopf algebras, such as Radford Hopf algebras \cite{Rad} and the Hopf algebras $E(n)$ \cite{BDG, CD}. 

\begin{remark}\label{semisimple8}
Semisimple Hopf algebras of dimension 8 over an algebraically closed field of characteristic 0 are classified in \cite{Ma}: they are, up to isomorphism, $\Bbbk G$, $(\Bbbk D_{8})^{*}$, $(\Bbbk Q_{8})^{*}$ and $H_{8}$, where $G$ is a finite group of order 8, $D_{8}$ is the dihedral group of order 8, $Q_{8}$ is the quaternion group, and $H_{8}$ is the unique noncommutative and noncocommutative semisimple Hopf algebra of dimension 8, known as the \emph{Kac--Paljutkin Hopf algebra} \cite{KP}. 
From what we recalled above, 
$(\Bbbk D_{8})^{*}$ and  $(\Bbbk Q_{8})^{*}$ have no quasitriangular structures.
In Subsection \ref{subsect:generKP} we will obtain a classification of infinitesimal $\Rr$-matrices for $H_8$, and more generally for the family of generalized Kac--Paljutkin Hopf algebras $H_{2n^2}$. 
\end{remark}


We now spend a few words on non-semisimple Hopf algebras of dimension 8. 

\subsection{Non-semisimple Hopf algebras of dimension $8$}
 According to \cite[Theorem 3.5]{St2} the class of non-semisimple Hopf algebras of dimension 8 (over an algebraically closed field of characteristic 0) consists of the following, up to isomorphism: 
\begin{itemize}
    \item A pointed Hopf algebra denoted by $A'_{C_{4}}$ which has no quasitriangular structure \cite[Proposition 2.6 (1)]{Wa2}.
    \item A pointed Hopf algebra denoted by $A''_{C_{4}}$ which again has no quasitriangular structure (ibid.).
    \item Its dual $(A''_{C_{4}})^{*}$, which has a unique quasitriangular structure \cite[Proposition 2.10]{Wa2}.
    \item A pointed Hopf algebra denoted by $A_{C_{2}\times C_{2}}$ which has two 1-parameter families of quasitriangular structures \cite[Proposition 2.7]{Wa2}.
    \item A pointed Hopf algebra denoted by $E(2)$, which is part of the family $E(n)$ and has a 1-parameter family of quasitriangular structures (see \cite{PVO}).
    \item The family of Radford pointed Hopf algebras $H_{(2,2,q)}$, that are also quasitriangular (see \cite{Rad}).
\end{itemize}

We start by classifying infinitesimal $\Rr$-matrices for $(A''_{C_{4}})^{*}$ and $A_{C_{2}\times C_{2}}$.  
We refer to \cite{Wa2} for the description of these algebras and their respective quasitriangular structures. The base field $\Bbbk$ will be assumed to be of characteristic different from 2. 
\medskip

\noindent\textbf{The Hopf algebra $(A''_{C_{4}})^{*}$}. Let $\omega\in\Bbbk$ be a primitive 4th root of unity. Then $(A''_{C_{4}})^{*}$ is the algebra generated by $g,x$ modulo the relations
\[
g^{4}=1,\qquad x^{2}=0,\qquad xg=\omega gx.
\]
The comultiplication and counit on $g$ and $x$ are given by
\[
\Delta(g)=g\otimes g-2gx\otimes g^{3}x,\quad \Delta(x)=1\otimes x+x\otimes g^{2},\quad \varepsilon(g)=1_{\Bbbk},\quad \varepsilon(x)=0,
\]
while the antipode is defined by $S(g)=g^{3}$ and $S(x)=-xg^{2}$. By \cite[Theorem 1.4]{Wa2} its unique quasitriangular structure has the form
\[
\Rr=\frac{1}{2}\big(1\otimes 1+g^{2}\otimes1+1\otimes g^{2}-g^{2}\otimes g^{2}\big)-x\otimes x-x\otimes g^{2}x+g^{2}x\otimes x-g^{2}x\otimes g^{2}x.
\]
Let us observe that a basis of $(A''_{C_{4}})^{*}$ is given by $\{1,g,g^{2},g^{3},x,xg,xg^{2},xg^{3}\}$. 

\begin{proposition}\label{prop:infinitesimalA''C4}
    The Hopf algebra $(A''_{C_{4}})^{*}$ admits no non-zero infinitesimal $\Rr$-matrices.
\end{proposition}

\begin{proof}
    Consider the group $G:=\{1,g,g^{2},g^{3}\}$ and the group algebra $A:=\Bbbk G$ (here considered just as an algebra). Then $(A''_{C_{4}})^{*}=A\oplus xA$, so that 
\[
\chi\in(A''_{C_{4}})^{*}\otimes(A''_{C_{4}})^{*}=(A\oplus xA)\otimes(A\oplus xA)=(A\otimes A)\oplus(A\otimes xA)\oplus(xA\otimes A)\oplus(xA\otimes xA).
\]
Thus, we can write
\[
\chi=a\otimes b+c\otimes xd+ xe\otimes f+xp\otimes xq,
\]
for $a,b,c,d,e,f,p,q\in A$. By imposing \eqref{cqtr1} on $g$ 
we obtain
\[
\begin{split}
    ag\otimes bg+&cg\otimes xdg+xeg\otimes fg+xpg\otimes xqg-2agx\otimes bg^{3}x\\&=ga\otimes gb+gc\otimes gxd+gxe\otimes gf+gxp\otimes gxq-2gxa\otimes g^{3}xb.
\end{split}
\]
By linear independence, considering the terms with $x$ only in the right tensorand, we must have $cg\otimes xdg=gc\otimes gxd$. But, by the defining relations, $cg\otimes xdg=
gc\otimes xgd=\omega gc\otimes gxd$. Hence, since $\omega\not=1$, we must have $gc\otimes gxd=0$, i.e. $c\otimes xd=0$. Similarly, searching for terms with $x$ only in the left tensorand, we find that $xeg\otimes fg=gxe\otimes gf$. Once again $xeg\otimes fg=
xge\otimes gf=\omega gxe\otimes gf$,
hence $xe\otimes f=0$. Then $\chi=a\otimes b+xp\otimes xq$. 

Now imposing \eqref{cqtr1} on $x$
we obtain
\[
a\otimes bx+ax\otimes bg^{2}=a\otimes xb+xa\otimes g^{2}b.
\]
By linear independence, looking at the terms with no $x$ on the left tensorand, we obtain $a\otimes bx=a\otimes xb$. Hence $a=0$ or $bx=xb$. Suppose $a\not=0$. By writing an arbitrary $b=\eta_{0}1+\eta_{1}g+\eta_{2}g^{2}+\eta_{3}g^{3}\in A$ with $\eta_{0},\eta_{1},\eta_{2},\eta_{3}\in\Bbbk$, we have
\[
bx=\eta_{0}x+\eta_{1}gx+\eta_{2} g^{2}x+\eta_{3} g^{3}x
\]
while
\[
xb=\eta_{0} x+\eta_{1} xg+\eta_{2} xg^{2}+\eta_{3} xg^{3}=\eta_{0} x+\omega\eta_{1} gx-\eta_{2} g^{2}x-\omega\eta_{3} g^{3}x.
\]
By linear independence, since $\omega$ is different from 1, we obtain  $\eta_{1}=\eta_{2}=\eta_{3}=0$, i.e. $b=\eta_{0}1$. Therefore we can write $\chi=\eta_{0}a\otimes1+xp\otimes xq$. But then $0\overset{\eqref{eq:eps-chi1}}{=}(\mathrm{Id}\otimes\varepsilon)(\chi)=\eta_{0}a$, so $\eta_{0}=0$ and we find $\chi=xp\otimes xq$. 
Finally,
\[
\chi\Delta(g)=xpg\otimes xqg=xgp\otimes xgq=\omega^{2}gxp\otimes gxq=-\Delta(g)\chi,
\]
so that, by \eqref{cqtr1} on $g$, we get $\chi\Delta(g)=0$ and then $\chi=xp\otimes xq=0$.
\end{proof}

\medskip

\noindent\textbf{The Hopf algebra $A_{C_{2}\times C_{2}}$}. As an algebra $A_{C_{2}\times C_{2}}$ is generated by $g,h,x$ modulo the relations 
\[
g^{2}=h^{2}=1,\quad x^{2}=0,\quad gh=hg,\quad gx=-xg,\quad hx=-xh.
\]
The comultiplication and counit on $g,h,x$ are given by
\[
\Delta(g)=g\otimes g,\quad \Delta(h)=h\otimes h,\quad \Delta(x)=1\otimes x+x\otimes g,\quad \varepsilon(g)=\varepsilon(h)=1_{\Bbbk},\quad \varepsilon(x)=0,
\]
while the antipode is defined by $S(g)=g$, $S(h)=h$ and $S(x)=-xg$. A basis is given by $\{ 1,g, h, gh, x, gx, hx, ghx\}$. By \cite[Theorem 1.4]{Wa2}, $A_{C_{2}\times C_{2}}$ has two 1-parameter families of universal $\Rr$-matrices of the form
\begin{equation}\label{eq:Ra}
\Rr_a:=\Rr_q(1\otimes 1+ ax\otimes gx)=\Rr_q+a\Rr_q(x\otimes gx), \quad \text{with}\, \, a\in\Bbbk, \, q=0, 1,
\end{equation}
where $\Rr_q=\frac{1}{4}\sum_{i,j,k,l=0,1} (-1)^{-ij-kl}g^ih^k\otimes g^{j+q(j+l)}h^{q(j+l)}.$
Explicitly, they are given by
\begin{equation}\label{eq:R0}
\Rr_\lambda=\Rr_0=\frac{1}{2}\big(1\otimes1+1\otimes g+g\otimes 1-g\otimes g)+\frac{\lambda}{2}\big(x\otimes gx+x\otimes x+gx\otimes gx-gx\otimes x)
\end{equation}
for $\lambda\in\Bbbk$, and
\begin{equation}\label{eq:R1}
\begin{split}
	\Rr_\nu&= \Rr_1+ \nu\Rr_1x\otimes gx=\frac{1}{4}\big(1\otimes 1+1\otimes gh+h\otimes 1-h\otimes gh+1\otimes h+1\otimes g+h\otimes h\\&-h\otimes g+g\otimes 1+g\otimes gh+gh\otimes 1-gh\otimes gh-g\otimes h-g\otimes g-gh\otimes h+gh\otimes g\big)\\&+\frac{\nu}{4}\big( x\otimes gx+x\otimes hx+hx\otimes gx -hx\otimes hx+x\otimes ghx+x\otimes x+hx\otimes ghx-hx\otimes x+gx\otimes gx\\&+gx\otimes hx+ghx\otimes gx-ghx\otimes hx-gx\otimes ghx-gx\otimes x-ghx\otimes ghx+ghx\otimes x \big)
\end{split}
\end{equation}
for $\nu\in \Bbbk$. As observed in \cite[Table III]{Wa2}, \eqref{eq:Ra}  are indeed triangular structures for $A_{C_{2}\times C_{2}}$.
 
We now classify the infinitesimal $\Rr$-matrices for $A_{C_{2}\times C_{2}}$.
\begin{proposition}\label{prop:chiAC2xC2}
	The Hopf algebra $A_{C_{2}\times C_{2}}$ has an exhaustive 1-parameter family of infinitesimal $\Rr$-matrices $\chi_\alpha$ given by $$\chi_\alpha=\alpha x\otimes xg, \quad \text{with}\,\, \alpha\in\Bbbk.$$
\end{proposition}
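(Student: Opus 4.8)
The plan is to determine $\chi$ by imposing the defining relations \eqref{cqtr1}--\eqref{cqtr3} in succession, exploiting the $\mathbb{Z}_{2}$-grading (by the number of factors of $x$) of $A_{C_{2}\times C_{2}}=A\oplus xA$, where $A=\Bbbk[\langle g,h\rangle]\cong\Bbbk[C_{2}\times C_{2}]$ is the commutative group subalgebra. Writing $\chi=a\otimes b+c\otimes xd+xe\otimes f+xp\otimes xq$ with all letters in $A$, I first read \eqref{cqtr1} on the grouplikes $g$ and $h$. Since $g^{2}=h^{2}=1$, this says that $\chi$ is invariant under conjugation by $g\otimes g$ and by $h\otimes h$; as conjugation by $g$ (or $h$) fixes $A$ and negates $xA$, the two ``mixed'' summands $c\otimes xd\in A\otimes xA$ and $xe\otimes f\in xA\otimes A$ lie in the $(-1)$-eigenspace and hence vanish. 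This leaves $\chi=a\otimes b+xp\otimes xq$ with $a\otimes b\in A\otimes A$ and $xp\otimes xq\in xA\otimes xA$.

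Next I impose \eqref{cqtr1} on $x$. The crucial observation is that $xp\otimes xq$ commutes with $\Delta(x)=1\otimes x+x\otimes g$ automatically: because $x^{2}=0$, every resulting product vanishes, so both $\Delta(x)(xp\otimes xq)$ and $(xp\otimes xq)\Delta(x)$ are zero. Hence \eqref{cqtr1} on $x$ constrains only $a\otimes b$; separating the two graded components of the resulting identity and using $\mathrm{char}(\Bbbk)\neq2$ forces $a$ and $b$ into $\mathrm{span}\{1,gh\}$ (or one of them to vanish). The counit conditions \eqref{eq:eps-chi1}--\eqref{eq:eps-chi2}, which annihilate the $xp\otimes xq$ part, then give $\varepsilon(a)=\varepsilon(b)=0$, so $a\otimes b=\beta(1-gh)\otimes(1-gh)$ for some $\beta\in\Bbbk$. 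Thus $\chi=\beta(1-gh)\otimes(1-gh)+\tau$ with $\tau\in xA\otimes xA$.

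The heart of the argument, and the step I expect to be the main obstacle, is the hexagon relation \eqref{cqtr2}, which involves conjugation by $\Rr$ and must be handled uniformly for all structures in \eqref{eq:R0} and \eqref{eq:R1}. The key computation is that, for every such $\Rr$,
\[
\Rr^{-1}(xm\otimes1)\Rr=xm\otimes g,\qquad \Rr^{-1}(1\otimes xm)\Rr=g\otimes xm\qquad(m\in\{1,g,h,gh\}).
\]
These hold because each $xm$ satisfies $\gamma(xm)\gamma^{-1}=\sigma(\gamma)\,xm$ for the character $\sigma$ of $\langle g,h\rangle$ with $\sigma(g)=\sigma(h)=-1$, and the grouplike bicharacter underlying $\Rr_{q}$ represents $\sigma$ precisely by the element $g$ (on both sides); moreover the ``odd'' part of $\Rr$, namely its $xA\otimes xA$ summand in \eqref{eq:R0} and \eqref{eq:R1}, drops out of these conjugations since every product it would contribute contains $x^{2}=0$. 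Granting this, \eqref{cqtr2} splits by parity: its purely even component forces $\beta=0$ (the two sides differ by a factor $2$ on $1\otimes1\otimes1$, using that $1-gh$ is central and so unaffected by the conjugation), while its odd components force $\tau=\sum_{m}c_{m}\,xm\otimes xg$. Finally \eqref{cqtr3}, together with the mirror conjugation identity above, eliminates all terms with $m\neq1$ and yields $\tau=\alpha\,x\otimes xg$.

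It then remains to check exhaustiveness, i.e.\ that $\chi_{\alpha}=\alpha x\otimes xg$ really is an infinitesimal $\Rr$-matrix for each structure \eqref{eq:R0}, \eqref{eq:R1}. Relation \eqref{cqtr1} holds since $\chi_{\alpha}\in xA\otimes xA$ (grouplikes by the parity argument, $x$ by $x^{2}=0$), and \eqref{cqtr2}--\eqref{cqtr3} follow from a direct expansion using $\Delta(xg)=g\otimes xg+xg\otimes1$ together with the two conjugation identities displayed above. The whole computation is thus driven by the single input $\Rr^{-1}(x\otimes1)\Rr=x\otimes g$, whose verification for the $q=1$ family, where $\Rr_{1}$ genuinely mixes $g$ and $h$, is the only delicate point.
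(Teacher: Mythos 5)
Your proof is correct, but it follows a genuinely different route from the paper's in its two key steps. Where you reduce the even part to $\beta(1-gh)\otimes(1-gh)$ via \eqref{cqtr1} on $x$ and the counit conditions \eqref{eq:eps-chi1}--\eqref{eq:eps-chi2}, and then kill $\beta$ with the even component of the hexagon \eqref{cqtr2}, the paper instead invokes \cite[Proposition 2.24]{ABSW}: composing with the Hopf projection $\pi:H\to H_0$ onto the commutative coradical forces $(\pi\otimes\pi)(\chi)\in P(H_0)\otimes P(H_0)=0$, which kills the whole $H_0\otimes H_0$ component at once. And where you pin down $\tau\in xH_0\otimes xH_0$ to $\alpha x\otimes xg$ using the odd components of \eqref{cqtr2}--\eqref{cqtr3} together with your conjugation identities, the paper uses the fact that every infinitesimal $\Rr$-matrix is a Hochschild $2$-cocycle (\cite[Theorem 2.21]{ABSW}) and extracts the form $\alpha x\otimes xg$ from \eqref{eq:Hoch2cocy} alone, which has the advantage of being independent of the $\Rr$-matrix; only the final exhaustiveness check involves $\Rr_a$, done there by explicit computation with \eqref{eq:R0} and \eqref{eq:R1}. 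Your approach is more self-contained (no cohomological input) and treats uniqueness and existence uniformly across both families \eqref{eq:Ra} through the single pair of identities $\Rr_a^{-1}(x\otimes 1)\Rr_a=x\otimes g$ and $\Rr_a^{-1}(1\otimes x)\Rr_a=g\otimes x$; the paper's approach outsources the hard work to general theorems and keeps the hexagon computations to a minimum.

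The one place your write-up needs tightening is exactly the point you flag: the conjugation identities for the $q=1$ family are asserted via an informal bicharacter remark rather than proved. They are true, and in fact follow for \emph{every} structure \eqref{eq:Ra} directly from quasi-cocommutativity, with no inspection of \eqref{eq:R1} at all: the odd summand $a\Rr_q(x\otimes gx)$ of $\Rr_a$ annihilates against $x\otimes 1$, $1\otimes x$ on either side because $x^2=0$, so it suffices to treat $\Rr_q\in H_0\otimes H_0$; then \eqref{qtr1} on $x$ reads $\Rr_q(1\otimes x)+\Rr_q(x\otimes g)=(x\otimes 1)\Rr_q+(g\otimes x)\Rr_q$, and since conjugation by $\Rr_q$ preserves the splitting $xH_0\otimes H_0\oplus H_0\otimes xH_0$, the two components give $\Rr_q(x\otimes g)\Rr_q^{-1}=x\otimes 1$ and $\Rr_q(1\otimes x)\Rr_q^{-1}=g\otimes x$; commutativity of $H_0$ (so that $1\otimes g$ and $g\otimes 1$ commute with $\Rr_q$) then yields exactly your two identities. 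With that substitution your argument is complete and, if anything, more uniform than the paper's verification.
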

\begin{proof}
	Denote $H:=A_{C_{2}\times C_{2}}$. If $(H, \Rr_{a})$ is pre-Cartier, then there exists $\chi=\chi^i\otimes\chi_i\in H\otimes H$ fulfilling \eqref{cqtr1}, \eqref{cqtr2}, \eqref{cqtr3}. 
Let $H_{0}:=\Bbbk\langle g, h \rangle$ be the coradical of $H$. Then, $H=H_{0}\oplus xH_{0}$. Equation \eqref{cqtr1} on $g$
is equivalent to $g\chi^i g^{-1}\otimes g\chi_ig^{-1}=\chi^{i}\otimes\chi_{i}$. 
The conjugation by $g$ is trivial on $H_{0}$ and it is the multiplication by $-1$ on $xH_{0}$. 
Since $\mathrm{char}(\Bbbk)\neq2$, we have $\chi\in H_{0}\otimes H_{0}+xH_{0}\otimes xH_{0}$.
 We further observe that the Hopf algebra $H$ has a Hopf algebra projection $\pi:H\to H_0$, given on the basis by $\pi(x^mg^ah^b)=\delta_{m,0}g^a h^b$. 
\begin{invisible}
We show that $\pi$ is a Hopf algebra map. 
Indeed, $\pi(1)=1$; given arbitrary elements $x^mg^ah^b$ and $x^{m'}g^{a'}h^{b'}$, if $m\neq 0$ or $m'\neq 0$, one has $\pi(x^mg^a h^b x^{m'}g^{a'}h^{b'})=0=\pi(x^mg^ah^b)\pi( x^{m'}g^{a'}h^{b'})$; if $m=0=m'$, then $\pi(g^ah^bg^{a'}h^{b'})=\pi(g^ag^{a'}h^{b}h^{b'})=\pi(g^{a+a'}h^{b+b'})=g^{a+a'}h^{b+b'}=g^ag^{a'}h^bh^{b'}=g^ah^bg^{a'}h^{b'}=\pi(g^ah^b)\pi(g^{a'}h^{b'})$. Moreover, $\varepsilon\pi(x^mg^a h^b)=\varepsilon(\delta_{m,0}g^a h^b)=\delta_{m,0}\varepsilon(g^a h^b)=\varepsilon (x^m)\varepsilon (g^a h^b)=\varepsilon (x^{m}g^a h^b)$; if $m\neq 0$ (we can suppose $m=1$), then $(\pi\otimes \pi)\Delta (xg^a h^b)=(\pi\otimes \pi)\Delta (x)\Delta(g^a h^b)=(\pi\otimes \pi)(g^ah^b\otimes xg^a h^b+xg^a h^b\otimes g^{a+1}h^b)=0=\Delta \pi(xg^ah^b)$, while if $m=0$, then $(\pi\otimes\pi)\Delta (g^a h^b)=(\pi\otimes \pi)(g^ah^b\otimes g^ah^b)=g^a h^b\otimes g^a h^b=\Delta(g^a h^b)=\Delta\pi(g^ah^b)$.
\end{invisible}
By \cite[Proposition 2.24]{ABSW} (cf. also \cite[Remark 2.25]{ABSW}), since $H_0$ is a commutative Hopf algebra, we have that $(\pi\otimes\pi)(\chi)\in P(H_0)\otimes P(H_0)=0$. Then, $\chi\in xH_0\otimes xH_0$. 
%
By \cite[Theorem 2.21]{ABSW} any infinitesimal $\Rr$-matrix $\chi$ is a $2$-cocycle, i.e. $\chi$ fulfills \eqref{eq:Hoch2cocy}. 
\begin{invisible}
Writing $\chi=xl\otimes xl'$, for $l, l'\in H_0$ given on the basis by $l=\alpha_01+\alpha_1g+\alpha_2 h+\alpha_3 gh,l'=\alpha'_01+\alpha'_1g+\alpha'_2 h+\alpha'_3 gh$, then $\chi_{12}+(\Delta\otimes\mathrm{Id})(\chi)=\chi_{23}+(\mathrm{Id}\otimes\Delta)(\chi)$ becomes
\[
\begin{split}
	(\alpha_0x&+\alpha_1xg+\alpha_2 xh+\alpha_3 xgh)\otimes (\alpha'_0x+\alpha'_1xg+\alpha'_2 xh+\alpha'_3 xgh)\otimes 1\\&+(\alpha_01\otimes x+\alpha_1g\otimes xg+\alpha_2 h\otimes xh+\alpha_3 gh\otimes xgh+\alpha_0x\otimes g+\alpha_1xg\otimes 1+\alpha_2 xh\otimes gh\\&+\alpha_3 xgh\otimes h)\otimes (\alpha'_0x+\alpha'_1xg+\alpha'_2 xh+\alpha'_3 xgh)=1\otimes (\alpha_0x+\alpha_1xg+\alpha_2 xh+\alpha_3 xgh)\otimes (\alpha'_0x\\&+\alpha'_1xg+\alpha'_2 xh+\alpha'_3 xgh)+(\alpha_0x+\alpha_1xg+\alpha_2 xh+\alpha_3 xgh)\otimes (\alpha'_01\otimes x+\alpha'_1g\otimes xg\\&+\alpha'_2h\otimes xh+\alpha'_3 gh\otimes xgh
	+\alpha'_0x\otimes g+\alpha'_1xg\otimes 1+\alpha'_2 xh\otimes gh+\alpha'_3 xgh\otimes h).
\end{split}	
\]
Looking at terms with $1$ on the first entry, by linear independence we get $\alpha_1\alpha'_0=\alpha_1\alpha'_1=\alpha_1\alpha'_2=\alpha_1\alpha'_3=0$, $\alpha_2\alpha'_0=\alpha_2\alpha'_1=\alpha_2\alpha'_2=\alpha_2\alpha'_3=0$, $\alpha_3\alpha'_0=\alpha_3\alpha'_1=\alpha_3\alpha'_2=\alpha_3\alpha'_3=0$, from which it follows that $\alpha_1=0$, $\alpha_2=0$, $\alpha_3=0$, respectively. Moreover, looking at terms with $1$ on the third entry, by linear independence we obtain $\alpha_0\alpha'_0=\alpha_1\alpha'_0=\alpha_2\alpha'_0=\alpha_3\alpha'_0=0$, $\alpha_0\alpha'_2=\alpha_1\alpha'_2=\alpha_2\alpha'_2=\alpha_3\alpha'_2=0$,  $\alpha_0\alpha'_3=\alpha_1\alpha'_3=\alpha_2\alpha'_3=\alpha_3\alpha'_3=0$, from which it follows that $\alpha'_0=0$, $\alpha'_2=0$, $\alpha'_3=0$, respectively. 
\end{invisible}
By writing $\chi=xl\otimes xl'$, for $l,l'\in H_{0}$, \eqref{eq:Hoch2cocy} becomes
\begin{equation}\label{eq:xl}
xl\otimes xl'\otimes1+(1\otimes x)\Delta(l)\otimes xl'+(x\otimes g)\Delta(l)\otimes xl'=1\otimes xl\otimes xl'+xl\otimes(1\otimes x)\Delta(l')+xl\otimes(x\otimes g)\Delta(l').
\end{equation}
Then, looking at those terms with no $x$ in the first tensorand, 
by linear independence we get $(1\otimes x)\Delta(l)\otimes xl'=1\otimes xl\otimes xl'$. Hence, writing $l=\beta 1+\gamma g+\delta h+\eta gh$ in terms of the basis of $H_{0}$, for $\beta,\gamma,\delta,\eta\in\Bbbk$, we get 
\[
\begin{split}
&\beta 1\otimes x\otimes xl'+\gamma g\otimes xg\otimes xl'+\delta  h\otimes xh\otimes xl'+\eta gh\otimes xgh\otimes xl'=1\otimes x(\beta 1+\gamma g+\delta h+\eta gh)\otimes xl'.
\end{split}
\]
Hence, by linear independence, $\gamma=\delta=\eta=0$ and then $l=\beta 1$. Analogously, looking at those terms in \eqref{eq:xl}  with no $x$ in the third tensorand, we must have $x\beta\otimes xl'\otimes 1=x\beta\otimes (x\otimes g)\Delta(l')$. Since $\beta \in \Bbbk$, we can rewrite this as $x\otimes xl'\otimes 1=x\otimes (x\otimes g)\Delta(l')$ by renaming $l'$ the product $\beta l'$. Writing $l'$ in terms of the basis of $H_0$ as done for $l$ before, one gets, by linear independence, that $l'=\alpha g$, for $\alpha\in\Bbbk$.

Thus, we have $\chi=\alpha x\otimes xg$. As in \cite[Remark 2.25]{ABSW}, we did not use the specific form of the universal $\Rr$-matrix. It remains to check that $\chi$ is an infinitesimal $\Rr$-matrix. Axioms \eqref{cqtr1} and \eqref{eq:Hoch2cocy} are trivially satisfied. Then, by Proposition \ref{prop:newcharacterization}, it remains to verify \eqref{cqtr2},
which is equivalent to $\Rr_{12}(\mathrm{Id}\otimes\Delta)(\chi)=\Rr_{12}\chi_{12}+\chi_{13}\Rr_{12}$.  
We have 
\[(\Rr_{a})_{12}\chi_{12}=(\Rr_{q})_{12}\chi_{12},\quad \chi_{13}(\Rr_{a})_{12}=\chi_{13}(\Rr_{q})_{12},\quad (\Rr_{a})_{12}(\mathrm{Id}\otimes \Delta)(\chi)=(\Rr_{q})_{12}(\mathrm{Id}\otimes \Delta)(\chi),\] 
so that, since  $\Rr_q^{-1}=\Rr_q^{\mathrm{op}}=\Rr_q$, for $q=0,1$, the axiom \eqref{cqtr2} for the triangular structures \eqref{eq:Ra} is equivalent to $(\mathrm{Id}\otimes \Delta)(\chi)=\chi_{12}+(\Rr_{q})_{12}\chi_{13}(\Rr_{q})_{12}$. 

For the case $q=0$, the universal $\Rr$-matrix is given as in \eqref{eq:R0} and one can easily show that these equalities are satisfied. 
\begin{invisible}
We have $(\mathrm{Id}\otimes\Delta)(\chi)=\alpha(x\otimes g\otimes xg+x\otimes xg\otimes 1)$ and $\chi_{12}+\Rr_{0_{12}}\chi_{13}\Rr_{0_{12}} = \alpha x\otimes xg \otimes 1+\frac{1}{4}\alpha (x\otimes 1\otimes xg +x\otimes g\otimes xg+ xg\otimes 1\otimes xg-xg\otimes g\otimes xg+x\otimes g\otimes xg+x\otimes 1\otimes xg+xg\otimes g\otimes xg-xg\otimes 1\otimes xg-xg\otimes 1\otimes xg-xg\otimes g\otimes xg-x\otimes 1\otimes xg+x\otimes g\otimes xg+xg\otimes g\otimes xg+xg\otimes 1\otimes xg+x\otimes g\otimes xg-x\otimes 1\otimes xg)=\alpha(x\otimes g\otimes xg+x\otimes xg\otimes 1)$, so \eqref{cqtr2} holds true. Moreover, $(\Delta\otimes\mathrm{Id})(\chi)=\alpha(1\otimes x\otimes xg+x\otimes g\otimes xg)$ and $\chi_{23}+\Rr_{0_{23}}\chi_{13}\Rr_{0_{23}}=\alpha 1\otimes x\otimes xg+\frac{1}{4}\alpha (x\otimes 1\otimes xg+x\otimes 1\otimes x+x\otimes g\otimes xg-x\otimes g\otimes x-x\otimes 1\otimes x-x\otimes 1\otimes xg-x\otimes g\otimes x+x\otimes g\otimes xg+x\otimes g\otimes xg+x\otimes g\otimes x+x\otimes 1\otimes xg-x\otimes 1\otimes x+x\otimes g\otimes x+x\otimes g\otimes xg+x\otimes 1\otimes x-x\otimes 1\otimes xg=\alpha(1\otimes x\otimes xg+x\otimes g\otimes xg)$, so \eqref{cqtr3} holds true.
\end{invisible}
One can also observe that 
\[
\begin{split}
(\Rr_{0})_{12}\chi_{13}(\Rr_{0})_{12}&=\alpha\Rr_{0}(x\otimes1)\Rr_{0}\otimes xg\\&=\frac{\alpha}{4}(1\otimes1+1\otimes g+g\otimes1-g\otimes g)(x\otimes1)(1\otimes1+1\otimes g+g\otimes1-g\otimes g)\otimes xg\\&=\frac{\alpha}{4}(x\otimes1)(1\otimes1+1\otimes g-g\otimes1+g\otimes g)(1\otimes1+1\otimes g+g\otimes1-g\otimes g)\otimes xg\\&=\alpha(x\otimes1)(1\otimes g)\otimes xg=\alpha x\otimes g\otimes xg,
\end{split}
\]
hence \eqref{cqtr2} is satisfied.
For the case $q=1$, the universal $\Rr$-matrix is given as in \eqref{eq:R1}, and it is not hard to check that $(\Rr_{1})_{12}(\mathrm{Id}\otimes\Delta)(\chi)=(\Rr_{1})_{12}\chi_{12}+\chi_{13}(\Rr_{1})_{12}$.
\begin{invisible}
Indeed, we have \[
\begin{split}
\Rr_{1_{12}}&(\mathrm{Id}\otimes\Delta)(\chi)=\frac{\alpha}{4}(x\otimes g\otimes xg+x\otimes xg\otimes 1+x\otimes h\otimes xg-x\otimes hx\otimes 1+hx\otimes g\otimes xg\\&+hx\otimes xg\otimes 1-hx\otimes h\otimes xg+hx\otimes hx \otimes 1+x\otimes hg\otimes xg-x\otimes hgx\otimes 1+x\otimes 1 \otimes xg\\&-x\otimes x\otimes 1+hx\otimes hg\otimes xg-hx\otimes hgx\otimes 1- hx\otimes 1\otimes xg+ hx\otimes x\otimes 1+gx\otimes g\otimes xg\\&+gx\otimes xg\otimes 1+gx\otimes h\otimes xg-gx\otimes hx\otimes 1+ghx\otimes g \otimes xg+ghx\otimes xg \otimes 1-ghx\otimes h\otimes xg\\&+ ghx\otimes hx\otimes 1-gx\otimes hg\otimes xg+gx\otimes hgx\otimes 1-gx\otimes 1\otimes xg+gx\otimes x\otimes 1-ghx\otimes hg\otimes xg\\&+ghx\otimes hgx\otimes 1+ghx\otimes 1\otimes xg- ghx\otimes x\otimes 1)
\end{split}
\]
which is equal to \[
\begin{split}
\Rr_{1_{12}}&\chi_{12}+\chi_{13}\Rr_{1_{12}}=\frac{\alpha}{4}(x\otimes xg\otimes 1-x\otimes hx\otimes 1+hx\otimes xg\otimes 1+hx\otimes hx\otimes 1-x\otimes hgx\otimes 1\\&-x\otimes x\otimes 1-hx\otimes hgx\otimes 1+hx\otimes x\otimes 1+gx\otimes xg\otimes 1-gx\otimes hx\otimes 1+ghx\otimes xg\otimes 1\\&+ghx\otimes hx\otimes 1+gx\otimes hgx\otimes 1+gx\otimes x\otimes 1+ ghx\otimes hgx\otimes 1-ghx\otimes x\otimes 1+x\otimes 1\otimes xg\\&+x\otimes gh\otimes xg-hx\otimes 1\otimes xg+hx\otimes gh\otimes xg+x\otimes h\otimes xg+x\otimes g\otimes xg-hx\otimes h\otimes xg\\&+hx\otimes g\otimes xg+xg\otimes 1\otimes xg+xg\otimes gh\otimes xg+ghx\otimes 1\otimes xg-ghx\otimes gh\otimes xg\\&+gx\otimes g\otimes xg-ghx\otimes h\otimes xg+ghx\otimes g\otimes xg).
\end{split}
\]
We now show that $\Rr_{1_{23}}(\Delta\otimes \mathrm{Id})(\chi)=\Rr_{1_{23}}\chi_{23}+\chi_{13}\Rr_{1_{23}}$. We have 
\[
\begin{split}
	\Rr_{1_{23}}&(\Delta\otimes \mathrm{Id})(\chi)=\frac{\alpha}{4}(1\otimes x\otimes xg+x\otimes g\otimes xg-1\otimes x\otimes hx-x\otimes g\otimes hx+1\otimes hx\otimes xg\\&+x\otimes hg\otimes xg+1\otimes hx\otimes hx+x\otimes hg\otimes hx-1\otimes x\otimes hgx-x\otimes g\otimes hgx-1\otimes x\otimes x\\&-x\otimes g\otimes x-1\otimes hx\otimes hgx -x\otimes hg\otimes hgx+1\otimes hx \otimes x+x\otimes hg\otimes x-1\otimes xg\otimes xg\\&+x\otimes 1\otimes xg-1\otimes gx\otimes hx-x\otimes 1\otimes hx+1\otimes ghx\otimes xg+x\otimes h\otimes xg+1\otimes ghx\otimes hx\\&+x\otimes h\otimes hx+1\otimes gx\otimes hgx+x\otimes 1\otimes hgx+1\otimes gx\otimes x+x\otimes 1\otimes x+1\otimes ghx\otimes hgx\\&+x\otimes h\otimes hgx-1\otimes ghx\otimes x-x\otimes h\otimes x)
\end{split}
\]
which is equal to 
\[
\begin{split}
	\Rr_{1_{23}}&\chi_{23}+\chi_{13}\Rr_{1_{23}}=\frac{\alpha}{4}(1\otimes x\otimes xg-1\otimes x\otimes hx	+1\otimes hx\otimes xg+1\otimes hx\otimes hx-1\otimes x\otimes hgx\\&-1\otimes x\otimes x-1\otimes hx\otimes hgx+1\otimes hx\otimes x+1\otimes gx\otimes xg-1\otimes gx\otimes hx+1\otimes ghx\otimes xg\\&+1\otimes ghx\otimes hx+1\otimes gx\otimes hgx+1\otimes gx\otimes x+1\otimes ghx\otimes hgx-1\otimes ghx\otimes x+x\otimes 1\otimes xg\\&-x\otimes 1\otimes hx+x\otimes h\otimes xg+x\otimes h\otimes hx+x\otimes 1\otimes hgx+x\otimes 1\otimes x+x\otimes h\otimes hgx\\&-x\otimes h\otimes x+x\otimes g\otimes xg-x\otimes g\otimes hx+x\otimes gh\otimes xg+x\otimes gh\otimes hx-x\otimes g\otimes hgx\\&-x\otimes g\otimes x-x\otimes gh\otimes hgx+x\otimes gh\otimes x).
\end{split}
\]
\end{invisible}
Thus, $A_{C_{2}\times C_{2}}$ has a 1-parameter family of infinitesimal $\Rr$-matrices $\chi_\alpha=\alpha x\otimes xg$, $\alpha\in\Bbbk$.
\end{proof}

\begin{remark}\label{rmk:chiAC2xC2-Cart}
	Notice that 
\begin{invisible}
	Indeed, for $q=0$ one has $\Rr_a=\Rr_\lambda$ as in \eqref{eq:R0} and
		\[
	\begin{split}
		\Rr_\lambda&\chi_\alpha=\Rr_0\chi_\alpha=\frac{1}{2}\alpha(x\otimes xg-x\otimes x+gx\otimes xg+gx\otimes x)=-\chi_\alpha^{\mathrm{op}}\Rr_0=-\chi_\alpha^{\mathrm{op}}\Rr_\lambda;		
	\end{split}
	\]
	for $q=1$ one has $\Rr_a=\Rr_\nu$ as in \eqref{eq:R1} and
	\[
	\begin{split}
		\Rr_\nu&\chi_\alpha=\Rr_1\chi_\alpha=\frac{1}{4}\alpha(x\otimes xg-x\otimes hx+hx\otimes xg+hx\otimes hx-x\otimes hgx-x\otimes x-hx\otimes hgx\\&+hx\otimes x+gx\otimes xg-gx\otimes hx+ghx\otimes xg+ghx\otimes hx+gx\otimes hgx+gx\otimes x+ghx\otimes hgx\\&-ghx\otimes x)=-\chi_\alpha^{\mathrm{op}}\Rr_1=-\chi_\alpha^{\mathrm{op}}\Rr_\nu.		
	\end{split}
\]
\end{invisible}
$\tau(S \otimes S)(\chi_{\alpha})= -\alpha x \otimes xg=-\chi_{\alpha}$,
thus, by Proposition \ref{cor:SSRchi},
 $(H,\Rr_a,\chi_{\alpha})$ is Cartier if and only if $\chi_{\alpha}=0$.
Thus, as for Sweedler's Hopf algebra (cf. \cite[Remark 2.8]{ABSW}), $A_{C_{2}\times C_{2}}$ is pre-Cartier through a 1-parameter family of infinitesimal $\Rr$-matrices $\chi_\alpha$, while it is Cartier only  with $\chi=0$.
\end{remark}	

Let us observe that $A_{C_{2}\times C_{2}}$ has coradical given by $\Bbbk(C_{2}\times C_{2})$.  As stated in \cite[p. 559]{CDR}, 
$A_{C_{2}\times C_{2}}$ is  isomorphic to $H\otimes\Bbbk C_{2}$, where $H$ is Sweedler's Hopf algebra, see also \cite[Proposition 4]{BDG00}. One can ``enlarge'' $A_{C_{2}\times C_{2}}$ by adding another group-like element, hence considering a Hopf algebra of dimension 16 with coradical given by $\Bbbk(C_{2}\times C_{2}\times C_{2})$. Let us observe that the pointed Hopf algebras of dimension 16 over an algebraically closed field of characteristic 0 are classified in \cite{CDR} and the (unique) Hopf algebra with coradical $\Bbbk(C_{2}\times C_{2}\times C_{2})$ is isomorphic to $H\otimes\Bbbk(C_{2}\times C_{2})$, see \cite[Theorem 6.1]{CDR}. 
In general one can build a Hopf algebra which has coradical $\Bbbk C_{2}^n$, dimension $2^{n+1}$ and is isomorphic to $H\otimes\Bbbk C_{2}^{n-1}$ with $2\leq n\in\mathbb{N}$, see \cite{BDG00}. We denote it by $A_{C_{2}^n}$. 
On the other hand $A_{C_{2}^n}$ can also be regarded as $A_{C_{2}\times C_{2}} \otimes \Bbbk C_2^{n-2}$ via the Hopf algebra isomorphism $f:A_{C_{2}\times C_{2}} \otimes \Bbbk C_2^{n-2} \rightarrow A_{C_{2}^n}$ given by
\begin{eqnarray*}
    f(a \otimes 1)&=&a \quad \textrm{for every } a \in A_{C_{2}\times C_{2}},\\
    f(1 \otimes g_i)&=&gg_i \quad \textrm{for every } g_i, \ i=1, \ldots, n-2,
\end{eqnarray*}
where $g$ is one of the grouplike generators of $A_{C_{2}\times C_{2}}$ and $g_{i}$ are the grouplike elements of order 2 generating each factor $C_{2}$ in $\Bbbk C_{2}^{n-2}$.

\begin{remark}
The (quasi)triangular structures $\Rr_a=\Rr^{i}\otimes\Rr_{i}$ of the algebra $A_{C_{2}\times C_{2}}$, are given by \eqref{eq:Ra}. Moreover, $\Bbbk  C_2^{n-2}$ is triangular with $\Rr$-matrix $1\otimes 1$ since it is cocommutative, hence $\Rr^{i}\otimes1\otimes\Rr_{i}\otimes 1$ is a (quasi)triangular structure for $A_{C_{2}\times C_{2}} \otimes \Bbbk C_2^{n-2}$, see e.g. \cite[Theorem 2.2]{Ch98}. From this we can immediately deduce that $A_{C_{2}^n}$ has triangular structures $\Rr_{a}$ given as in \eqref{eq:Ra}, since $f(\Rr^{i}\otimes1)\otimes f(\Rr_{i}\otimes 1)=\Rr^i \otimes \Rr_i=\Rr_a$. 
\end{remark}

\begin{remark}
We observe that there can be other quasitriangular structures on $A_{C_{2}^n}$ different from \eqref{eq:Ra} and therefore that the family of universal $\Rr$-matrices obtained in the last remark might not be exhaustive. Indeed if one, for example, repeats the same reasoning considering the isomorphism $A_{C_{2}\times C_{2}} \cong H \otimes \Bbbk C_{2}$, it becomes clear how to recover \eqref{eq:R0} from the quasitriangular structures of Sweedler's Hopf algebra $H$, although we cannot achieve a similar result for \eqref{eq:R1}.
\end{remark}

Nontheless we obtain a partial result on the infinitesimal $\Rr$-matrices of $A_{C_{2}^n}$.

\begin{proposition}\label{AC2-n}
    The triangular Hopf algebra $(A_{C_{2}^n}, \Rr_a)$ has a 1-parameter family of infinitesimal $\Rr$-matrices given by $\alpha x\otimes xg$, for $\alpha\in\Bbbk$.
\end{proposition}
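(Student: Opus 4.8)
The plan is to run the same three-step argument used in the proof of Proposition \ref{prop:chiAC2xC2}, now with the larger coradical $H_0 := \Bbbk C_2^n$ in place of $\Bbbk(C_2 \times C_2)$, and then to reduce the verification of the axioms to the computation already carried out there. Recall from the preceding discussion that $A_{C_2^n} = H_0 \oplus xH_0$ as a vector space, where $x$ is the odd generator with $\Delta(x) = 1 \otimes x + x \otimes g$, and that $H_0$ is the commutative cocommutative Hopf subalgebra $\Bbbk C_2^n$; conjugation by $g$ is the identity on $H_0$ and multiplication by $-1$ on $xH_0$, since $gxg^{-1} = -x$ and $g$ commutes with all the group-likes.

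First I would write an arbitrary candidate $\chi = \chi^i \otimes \chi_i \in A_{C_2^n} \otimes A_{C_2^n}$ and impose \eqref{cqtr1} on $g$, which is equivalent to $g\chi^i g^{-1} \otimes g\chi_i g^{-1} = \chi^i \otimes \chi_i$. As in Proposition \ref{prop:chiAC2xC2}, because $\mathrm{char}(\Bbbk) \neq 2$ this confines $\chi$ to the $(+1)$-eigenspace of the conjugation, namely $H_0 \otimes H_0 + xH_0 \otimes xH_0$. Next I would use the Hopf algebra projection $\pi : A_{C_2^n} \to H_0$ that kills the $x$-part (defined exactly as before on the basis $x^m e$, $e \in C_2^n$): by \cite[Proposition 2.24]{ABSW}, since $H_0$ is commutative, $(\pi \otimes \pi)(\chi) \in P(H_0) \otimes P(H_0)$, and $P(\Bbbk C_2^n) = 0$ since a group algebra has no nonzero primitives. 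This forces the $H_0 \otimes H_0$ component to vanish, so $\chi \in xH_0 \otimes xH_0$.

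The third step is the Hochschild $2$-cocycle condition \eqref{eq:Hoch2cocy}, which $\chi$ must satisfy by \cite[Theorem 2.21]{ABSW}. Writing $\chi = \sum_{e,f \in C_2^n} c_{ef}\, xe \otimes xf$ and using $\Delta(xe) = e \otimes xe + xe \otimes ge$, I would expand both sides of \eqref{eq:Hoch2cocy} and split by the parity pattern (even/odd in $x$) of the three tensor slots. Comparing the three matching parity blocks by linear independence of the basis yields, respectively, $c_{ef} = 0$ unless $f = g$, unless $e = 1$, and unless $f = ge$; these are simultaneously satisfied only by $(e,f) = (1, g)$, so $\chi = \alpha\, x \otimes xg$ with $\alpha \in \Bbbk$. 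This is the only place where the enlarged coradical enters, and the extra basis elements of $C_2^n$ cause no difficulty since the argument is purely linear-independence and parity bookkeeping; I expect this to be the main, though still routine, computational point.

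Finally, for sufficiency I would note that both $\Rr_a$ (of the form \eqref{eq:Ra}) and $\chi_\alpha = \alpha x \otimes xg$ lie in $A_{C_2 \times C_2} \otimes A_{C_2 \times C_2}$, regarded as a Hopf subalgebra of $A_{C_2^n}$ via $f(a \otimes 1) = a$. Since $A_{C_2 \times C_2}$ is closed under the structure maps, verifying \eqref{cqtr1}, \eqref{cqtr2} and \eqref{cqtr3} for $(A_{C_2^n}, \Rr_a, \chi_\alpha)$ is identical to the verification already completed in the proof of Proposition \ref{prop:chiAC2xC2}; in particular \eqref{cqtr1} is trivial and the remaining identities follow from $\Rr_q^{-1} = \Rr_q^{\mathrm{op}} = \Rr_q$ exactly as there. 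Hence the infinitesimal $\Rr$-matrices of $(A_{C_2^n}, \Rr_a)$ are precisely the $\chi_\alpha = \alpha x \otimes xg$, $\alpha \in \Bbbk$, the word \emph{partial} in the statement referring only to the fact that the family \eqref{eq:Ra} need not exhaust all quasitriangular structures on $A_{C_2^n}$.
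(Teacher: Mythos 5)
Your proposal is correct in substance but takes a genuinely different route from the paper. The paper's own proof is a three-line transport argument: it invokes \cite[Proposition 2.15]{ABSW} (a tensor product of pre-Cartier bialgebras is pre-Cartier) to conclude that $\alpha x\otimes 1\otimes xg\otimes 1$ is an infinitesimal $\Rr$-matrix for $A_{C_2\times C_2}\otimes\Bbbk C_2^{n-2}$, and then pushes it through the isomorphism $f$ to obtain $\alpha x\otimes xg$ on $A_{C_2^n}$; this yields existence only, which is all the proposition asserts (note that, unlike Proposition \ref{prop:chiAC2xC2}, the statement does not say ``exhaustive''). You instead re-run the intrinsic classification: conjugation by $g$, the coradical projection $\pi$ together with \cite[Proposition 2.24]{ABSW}, and the Hochschild $2$-cocycle condition \eqref{eq:Hoch2cocy} split into the three parity blocks. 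Your bookkeeping there is correct (the blocks force $c_{ef}=0$ unless $f=g$, unless $e=1$, and unless $f=ge$, leaving only $(e,f)=(1,g)$), and since none of these steps uses the particular form of the universal $\Rr$-matrix, your argument buys something the paper does not claim: the family $\alpha x\otimes xg$ exhausts the infinitesimal $\Rr$-matrices of $(A_{C_2^n},\Rr_a)$, and in fact of $A_{C_2^n}$ equipped with any quasitriangular structure.

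One step in your sufficiency argument needs patching. You claim that, because $\Rr_a$ and $\chi_\alpha$ lie in $B\otimes B$ for the Hopf subalgebra $B:=f(A_{C_2\times C_2}\otimes 1)$, verifying all three axioms is ``identical to'' the verification in Proposition \ref{prop:chiAC2xC2}. This is valid for \eqref{cqtr2} and \eqref{cqtr3}: each is an equality between two fixed elements of $B^{\otimes 3}$, and $B^{\otimes 3}$ embeds injectively into $A_{C_2^n}^{\otimes 3}$, so the identities hold upstairs if and only if they hold in $B$. But \eqref{cqtr1} is quantified over all $h\in A_{C_2^n}$, not just over $B$, so the earlier computation does not cover it. The step does not fail, it is merely missing: since \eqref{cqtr1} is multiplicative and linear in $h$, it suffices to check it on the additional group-like generators $k$ of $A_{C_2^n}$, and conjugation by any group-like $k$ sends $x\mapsto \pm x$ and $xg\mapsto \pm xg$ with the \emph{same} sign, so $(k\otimes k)$ centralizes $x\otimes xg$. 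This one-line check must be stated; the paper's route avoids the issue altogether because \cite[Proposition 2.15]{ABSW} delivers \eqref{cqtr1} over the whole tensor product at once.
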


\begin{proof}
   The Hopf algebra $A_{C_{2}\times C_{2}}$ has infinitesimal $\Rr$-matrices given by $\alpha x\otimes xg$, with $\alpha\in\Bbbk$, while the group Hopf algebra $\Bbbk C_2^{n-2}$ has only the infinitesimal $\Rr$-matrix $\chi=0$. Hence, by \cite[Proposition 2.15]{ABSW} we obtain that $\alpha x\otimes 1\otimes xg\otimes 1$ is an infinitesimal $\Rr$-matrix for $A_{C_{2}\times C_{2}}\otimes\Bbbk C_2^{n-2}$ (see also \cite[Example 2.16]{ABSW}). Thus, using the isomorphism $f$ above, we have that $f(\alpha x\otimes 1)\otimes f(xg\otimes 1)=\alpha x\otimes xg$ is an infinitesimal $\Rr$-matrix for $A_{C_{2}^n}$.
\end{proof}


\subsection{The generalized Kac--Paljutkin Hopf algebras $H_{2n^2}$}\label{subsect:generKP}
We recall from \cite{Masuoka, P19} a class of semisimple Hopf
algebras $H_{2n^2}$ of dimension $2n^2$ (they also already appeared in \cite{Kac}, under the name ``ring groups''). These can be viewed as a generalization of the $8$-dimensional \emph{Kac--Paljutkin Hopf algebra} $H_8$ defined in \cite{KP}. They are not pointed, neither commutative, nor cocommutative.

Let $n > 1$ and $q$ be a primitive $n$-th root of unity of the field $\Bbbk$, whose characteristic does not divide $2n^2$. The Hopf algebra $H_{2n^2}$ is the algebra generated by $x$, $y$, and $z$, subject to the following relations: 
\[
x^n=1,\quad y^n=1,\quad xy=yx,\quad zx=yz,\quad zy=xz,\quad z^2=\frac{1}{n}\sum_{i,j=0}^{n-1}q^{-ij}x^iy^j.
\]
The Hopf algebra structure is defined by
\[
\begin{split}
&\Delta(x)=x\otimes x,\qquad \Delta (y)=y\otimes y,\qquad \Delta(z)=\frac{1}{n}\sum_{i,j=0}^{n-1}q^{-ij}(x^i\otimes y^j)(z\otimes z),\\&\varepsilon(x)=1,\quad \varepsilon(y)=1,\quad  \varepsilon (z)=1,\quad S(x)=x^{-1}=x^{n-1},\quad S(y)=y^{-1}=y^{n-1},\quad S(z)=z.
\end{split}
\]
One can check that $\{x^iy^j, x^iy^jz\,\vert\, 0\leq i,j\leq n-1\,\}$ is a basis of $H_{2n^2}$. Moreover, all universal $\Rr$-matrices for $H_{2n^2}$, with $n\geq 3$, are given in \cite[Proposition 3.11]{ZL}.
We prove that the infinitesimal $\Rr$-matrices on $(H_{2n^2}, \Rr)$ with $n \geq 3$ consist only of the zero one. The following proof does not depend on the explicit form of $\Rr$ but only on the fact that $z$ does not appear in the quasitriangular structure. 

\begin{theorem}\label{prop:H2n2}
The Hopf algebra $H_{2n^2}$ with $n\geq 3$, has no non-zero infinitesimal $\Rr$-matrices.
\end{theorem}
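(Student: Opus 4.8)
The plan is to use the grading of $H_{2n^2}$ coming from the powers of $z$ together with the only feature of the quasitriangular structure that is needed, namely that $z$ does not occur in it, i.e. $\Rr,\Rr^{-1}\in H_0\otimes H_0$ where $H_0:=\Bbbk\langle x,y\rangle\cong\Bbbk(C_n\times C_n)$ is the group Hopf subalgebra (this is read off from \cite[Proposition 3.11]{ZL}). First I would record the vector space decomposition $H_{2n^2}=H_0\oplus H_1$, $H_1:=H_0z$, and check from $\Delta(z)=\frac1n\sum_{i,j}q^{-ij}x^iz\otimes y^jz$ and $\Delta(x^ay^b)=x^ay^b\otimes x^ay^b$ that $\Delta(H_0)\subseteq H_0\otimes H_0$ and $\Delta(H_1)\subseteq H_1\otimes H_1$. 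Thus $H_0,H_1$ are subcoalgebras and $H_{2n^2}$ is a $\mathbb{Z}/2$-graded coalgebra. On the multiplicative side I would only use the elementary inclusions $H_0H_a\subseteq H_a$ and $H_aH_0\subseteq H_a$ for $a\in\{0,1\}$; note that for $n\geq 3$ one need not have a genuine algebra grading (the value of $z^2$ is irrelevant to what follows), but left and right multiplication by $H_0$ does preserve the grading of each tensor slot.

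Writing $\chi=\chi_{00}+\chi_{01}+\chi_{10}+\chi_{11}$ with $\chi_{ab}\in H_a\otimes H_b$, I would then read \eqref{cqtr2} and \eqref{cqtr3} in each tridegree of $H_{2n^2}^{\otimes 3}$, using that $1\in H_0$, that $\mathrm{Id}\otimes\Delta$ and $\Delta\otimes\mathrm{Id}$ respect the coalgebra grading, and --- crucially --- that the conjugations $\chi_{13}\mapsto\Rr^{-1}_{12}\chi_{13}\Rr_{12}$ and $\chi_{13}\mapsto\Rr^{-1}_{23}\chi_{13}\Rr_{23}$ leave the grading of every slot unchanged because $\Rr^{\pm1}\in H_0\otimes H_0$ and $H_0H_aH_0\subseteq H_a$. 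In \eqref{cqtr2} the three summands coming from $\chi_{ab}$ sit in tridegrees $(a,b,b)$, $(a,b,0)$ and $(a,0,b)$ respectively; hence the components in tridegree $(1,1,1)$ and $(0,1,1)$ reduce to $(\mathrm{Id}\otimes\Delta)(\chi_{11})=0$ and $(\mathrm{Id}\otimes\Delta)(\chi_{01})=0$, which give $\chi_{11}=\chi_{01}=0$ since $\mathrm{Id}\otimes\Delta$ is injective. Symmetrically, the tridegree $(1,1,0)$ component of \eqref{cqtr3}, whose three summands lie in $(a,a,b)$, $(0,a,b)$, $(a,0,b)$, reduces to $(\Delta\otimes\mathrm{Id})(\chi_{10})=0$, so $\chi_{10}=0$.

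It remains to treat $\chi=\chi_{00}\in H_0\otimes H_0$. Since $H_0$ is a Hopf subalgebra, $(H_0,\Rr)$ is quasitriangular (indeed $H_0$ is commutative and cocommutative, so \eqref{qtr1} is automatic and \eqref{qtr2}, \eqref{qtr3} restrict), and all terms of \eqref{cqtr1}, \eqref{cqtr2}, \eqref{cqtr3} already lie in tensor powers of $H_0$; hence $\chi_{00}$ is an infinitesimal $\Rr$-matrix for $(H_0,\Rr)=(\Bbbk(C_n\times C_n),\Rr)$. As this is the group algebra of an abelian group, \cite[Example 2.3 (2)]{ABSW} forces $\chi_{00}=0$, and therefore $\chi=0$.

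The main obstacle is entirely in the middle step: one must be certain that conjugating two of the three tensor legs by $\Rr^{\pm1}$ never transports a leg between $H_0$ and $H_1$. This is precisely what $\Rr^{\pm1}\in H_0\otimes H_0$ guarantees, and it is the only way the hypothesis enters; the pleasant consequence is that the three off-diagonal blocks $\chi_{01},\chi_{10},\chi_{11}$ collapse purely from the coalgebra grading, with no reference to the explicit coefficients of $\Rr$, exactly as the remark preceding the statement predicts.
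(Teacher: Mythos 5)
Your proposal is correct and takes essentially the same approach as the paper: your tridegree bookkeeping for \eqref{cqtr2} and \eqref{cqtr3} relative to the splitting $H_{2n^2}=A\oplus zA$ (with $A=\Bbbk\langle x,y\rangle$ and $\Rr^{\pm1}\in A\otimes A$) is exactly the paper's argument that the terms with $z$ in the relevant tensorands must vanish, killing $\chi_{11}$, $\chi_{01}$ and then $\chi_{10}$. The concluding step also coincides: once $\chi\in A\otimes A$, commutativity of $A$ trivializes the conjugation by $\Rr^{\pm1}$, and \cite[Example 2.3 (2)]{ABSW} (equivalently, $\chi\in P(A)\otimes P(A)=\{0\}$) forces $\chi=0$.
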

\begin{proof}
Set $G:=\{x^iy^j\,\vert\, 0\leq i,j\leq n-1\}$ and $A:=\Bbbk G$. Notice that $G$ is an abelian group (so $A$ is commutative), $H_{2n^2}=A\oplus zA$ and $\Delta(A) \subseteq A \otimes A$. Furthermore, all the quasitriangular structures of $H_{2n^2}$ with $n \geq 3$ are contained in $A \otimes A$ (see \cite[Proposition 3.11]{ZL}). Consider an element $$\chi\in H_{2n^2}\otimes H_{2n^2}=(A\oplus zA)\otimes (A\oplus zA)=(A\otimes A)\oplus (A\otimes zA)\oplus (zA\otimes A)\oplus (zA\otimes zA),$$
 that is,
	\[
	\chi=a\otimes b+c\otimes zd+ ze\otimes f+zp\otimes zr,
	\]
for $a,b,c,d,e,f,p,r \in A$. 
If $\chi$ satisfies \eqref{cqtr2}, then we have 
\[
(\mathrm{Id}\otimes \Delta)(\chi)=a\otimes \Delta(b)+c\otimes \Delta(z)\Delta(d)+ze\otimes\Delta (f)+zp\otimes \Delta(z)\Delta(r) 
\]
and 
\[
\begin{split}
\chi_{12}+\Rr^{-1}_{12}\chi_{13}\Rr_{12}&=a\otimes b\otimes 1+c\otimes zd\otimes 1+ze\otimes f\otimes 1+zp\otimes zr\otimes 1\\&+\Rr^{-1}_{12}(a\otimes 1\otimes b+c\otimes 1\otimes zd+ze\otimes 1\otimes f+zp\otimes 1\otimes zr)\Rr_{12}\\
&=a\otimes b\otimes 1+c\otimes zd\otimes 1+ze\otimes f\otimes 1+zp\otimes zr\otimes 1\\&+a\otimes 1\otimes b+c\otimes 1\otimes zd+\Rr^{-1}_{12}(ze\otimes 1\otimes f+zp\otimes 1\otimes zr)\Rr_{12}
\end{split}\]
are equal:
\[
\begin{split}
   a\otimes \Delta(b)&+c\otimes \Delta(z)\Delta(d)+ze\otimes\Delta (f)+zp\otimes \Delta(z)\Delta(r)=a\otimes b\otimes 1+c\otimes zd\otimes 1\\&+ze\otimes f\otimes 1+zp\otimes zr\otimes 1+a\otimes 1\otimes b+c\otimes 1\otimes zd+\Rr^{-1}_{12}(ze\otimes 1\otimes f+zp\otimes 1\otimes zr)\Rr_{12}.  
\end{split}
\]
Observe that $a \otimes \Delta(b) \in A \otimes A \otimes A$, $c\otimes \Delta(z)\Delta(d) \in A \otimes zA \otimes zA$, $ze\otimes\Delta (f) \in zA \otimes A \otimes A$, $zp\otimes \Delta(z)\Delta(r) \in zA \otimes zA \otimes zA$. Furthermore, $\Rr^{-1}_{12}(ze\otimes 1\otimes f)\Rr_{12} \in zA \otimes A \otimes A$ and $\Rr^{-1}_{12}(zp\otimes 1\otimes zr)\Rr_{12} \in zA \otimes A \otimes zA$, since $\Rr \in A \otimes A$.
By linear independence, we immediately get that $c\otimes\Delta(z)\Delta(d)=0=zp\otimes \Delta(z)\Delta(r)$. By applying $\mathrm{Id}\otimes\varepsilon\otimes\mathrm{Id}$ to these equalities, we find $c\otimes zd=0=zp \otimes zr$.
Thus, we obtain $\chi=a\otimes b+ze\otimes f$.  Given that $\chi$ also satisfies \eqref{cqtr3}, we have
\[
\begin{split}
\Delta(a)\otimes b+\Delta(z)\Delta(e)\otimes f&=(\Delta\otimes \mathrm{Id})(\chi)
=\chi_{23}+\Rr^{-1}_{23}\chi_{13}\Rr_{23}\\&=1\otimes a\otimes b+1\otimes ze\otimes f+\Rr^{-1}_{23}(a\otimes 1\otimes b+ze\otimes 1\otimes f)\Rr_{23}\\
&=1\otimes a\otimes b+1\otimes ze\otimes f+a\otimes 1\otimes b+ ze\ot1\ot f.
\end{split}
\]
Observe that $\Delta(a)\otimes b \in A \otimes A \otimes A$, $\Delta(z)\Delta(e)\otimes f \in zA \otimes zA \otimes A$. By linear independence, we get $\Delta(z)\Delta(e)\otimes f =0$.
By applying $\mathrm{Id}\otimes \varepsilon\otimes\mathrm{Id}$, we find $ze\otimes f=0$, hence $\chi=a\otimes b \in A \otimes A$. Then, since $\Rr \in A \otimes A$ and $A$ is commutative, $\Rr_{12}^{-1}\chi_{13}\Rr_{12}=\chi_{13}=\Rr_{23}^{-1}\chi_{13}\Rr_{23}$. Therefore, \eqref{cqtr2} and \eqref{cqtr3} together become equivalent to $\chi\in P(A)\otimes P(A)$, cf. \cite[Example 2.3 (2)]{ABSW}, but $P(A)=\{0\}$, so $\chi=0$.
\end{proof}

\subsubsection{The Kac--Paljutkin Hopf algebra $H_{8}$} When $n=2$, we retrieve the well-known Kac--Paljutkin Hopf algebra $H_8$. Explicitly, $H_8$ is the algebra generated by $x$, $y$, and $z$, subject to the following relations: 
\[
x^2=1,\quad y^2=1,\quad xy=yx,\quad zx=yz,\quad zy=xz,\quad z^2=\frac{1}{2}(1+x+y-xy).
\] 
The Hopf algebra structure is given by
\[
\begin{split}
&\Delta(x)=x\otimes x,\quad \Delta (y)=y\otimes y,\quad \Delta(z)=\frac{1}{2}(z\otimes z)(1\otimes 1+y\otimes 1+1\otimes x-y\otimes x),\\&\varepsilon(x)=1,\quad \varepsilon(y)=1,\quad  \varepsilon (z)=1,\quad S(x)=x,\quad S(y)=y,\quad S(z)=z.
\end{split}
\]
This Hopf algebra is again quasitriangular, but it admits a richer variety of $\Rr$-matrices. Namely $H_8$ is quasitriangular via two families of universal $\Rr$-matrices listed in \cite[Lemma 5.4]{Wa3} and therein denoted by $\Rr^{H_8}_{pq}$ and  $\Rr^{H_8}_{l}$. The second family of $\Rr$-matrices is well-defined if, and only if, $\Bbbk$ contains a primitive $8$-th root of unity $\omega$. In order to write these quasitriangular structures in a more suitable form for our purposes, let us make use of a different presentation of $H_8$ (see \cite[Example $2.3$]{ZL}). One can define
\[
\begin{split}
e_1:=\frac{1}{4}(1+x+y+xy), \quad  & e_x:=\frac{1}{4}(1-x+y-xy),\\
e_y:=\frac{1}{4}(1+x-y-xy), \quad  & e_{xy}:=\frac{1}{4}(1-x-y+xy),\\
\end{split}
\]
$G:=\{1,x,y,xy\}$ and $A:=\Bbbk G$, so that $H_{8}=A\oplus zA$.
Then one can check that $A$ is generated by $e_1$, $e_x$, $e_y$, $e_{xy}$ and that the $e_g$'s, with $g \in G$, are pairwise orthogonal and idempotent. Moreover, we have that all quasitriangular structures on $H_8$ are given by
\begin{eqnarray*}
\Rr_{\alpha,\beta}&=& e_1 \otimes [e_1 + e_x + e_y + e_{xy}] + e_x \otimes [e_1 + \alpha e_x + \beta e_y + \alpha\beta e_{xy}]+\\
&&e_y \otimes [e_1 - \beta e_x + \alpha e_y - \alpha \beta e_{xy}] + e_{xy} \otimes [e_1 - \alpha\beta e_x + \alpha\beta e_y - e_{xy}]
\end{eqnarray*}
for $\alpha, \beta \in \{-1,1 \}$ and
\begin{eqnarray*}
\Rr_{\omega}&=& [e_1 \otimes e_1 + e_1 \otimes e_{xy}+ e_{xy}\otimes e_1 - e_{xy}\otimes e_{xy}]+\\
&&[e_1 \otimes e_x + e_1 \otimes e_y- \omega^2 e_{xy} \otimes e_x + \omega^2 e_{xy}\otimes e_y](z \otimes 1)+\\
&&[e_x \otimes e_1 + e_y \otimes e_1 + \omega^2 e_x \otimes e_{xy} -\omega^2 e_y \otimes e_{xy}](1 \otimes z)+\\
&&[\omega^{-1}e_x \otimes e_x + \omega e_x \otimes e_y + \omega e_y \otimes e_x + \omega^{-1}e_y \otimes e_y](z \otimes z)
\end{eqnarray*}
for all $\omega$ such that $\omega^4=-1$ (i.e. $\omega$ must be a primitive $8$-th root of unity). 

\begin{remark}
The presentation of $H_8$ given in \cite{Wa3} uses generators $g, h$ and $t$. One can rewrite $\Rr^{H_8}_{pq}$ and $\Rr^{H_8}_{l}$ respectively as $\Rr_{\alpha,\beta}$ and $\Rr_{\omega}$ by identifying:
\[
g \leadsto x,\quad  h \leadsto y,\quad t \leadsto \frac{z}{4}\left[1-\omega^2 +\sqrt{2}+(1+\omega^2 +\sqrt{2}\omega^2)x+(1+\omega^2 -\sqrt{2}\omega^2)y+(1-\omega^2 -\sqrt{2})xy \right].
\]
This should be understood as a formal substitution since $\Bbbk$ does not need to contain a square root of $2$ in order for $\Rr_{\omega}$ to be well-defined. Notice also that $\Rr_{\alpha,\beta}$ was already explicitly determined in this form in \cite[Remark $3.12$]{ZL}.
\end{remark}

The first family of quasitriangular structures $\Rr_{\alpha,\beta}$ is of the same form of those of $H_{2n^2}$ for $n \geq 3$ and does not depend on the generator $z$ (see \cite[Remark $3.12$]{ZL}), so the proof of Theorem~\ref{prop:H2n2} can be adapted to show that any infinitesimal $\Rr$-matrix $\chi$ for $(H_8, \Rr_{\alpha,\beta})$, must be zero. Unfortunately, to determine the infinitesimal $\Rr$-matrices for $(H_8, \Rr_{\omega})$, we cannot perform the same steps, since $\Rr_{\omega}$ depends on the generator $z$. On account of this, we prove a list of equalities that will be useful for the computations we will carry out in the sequel. 

\begin{lemma}\label{lemma:useful}
The following equalities hold:
\begin{eqnarray}
&&\Rr_{\omega}^{-1}(e_1 \otimes 1)\Rr_{\omega}=e_1 \otimes 1,\label{eq:e1}\\
&&\Rr_{\omega}^{-1}(e_{xy} \otimes 1)\Rr_{\omega}=e_{xy} \otimes 1,\label{eq:exy}\\
&& \Rr_{\omega}^{-1}(e_x \otimes 1)\Rr_{\omega}=e_y \otimes 1+(e_x-e_y) \otimes (e_1+e_{xy})\label{eq:exl},\\
&& \Rr_{\omega}^{-1}(e_y \otimes 1)\Rr_{\omega}=e_x \otimes 1-(e_x-e_y) \otimes (e_1+e_{xy})\label{eq:eyl},\\
&& \Rr_{\omega}^{-1}(z \otimes 1) \Rr_{\omega}=(z \otimes 1) [1 \otimes 1-(e_x+e_y) \otimes (e_x+e_y-2e_{xy})-\omega^2(e_x -e_y) \otimes (e_x-e_y)]. \label{eq:zl}
\end{eqnarray}
\end{lemma}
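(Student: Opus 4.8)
The plan is to avoid computing $\Rr_\omega^{-1}$ altogether. Since $\Rr_\omega$ is invertible, each asserted identity $\Rr_\omega^{-1}(T)\Rr_\omega=T'$ is equivalent to the plain multiplicative identity $(T)\Rr_\omega=\Rr_\omega T'$ in $H_8\otimes H_8$, and I would verify these one at a time. It is worth recording at the outset that conjugation by $\Rr_\omega$ is an algebra automorphism of $H_8\otimes H_8$, so the five formulae are not independent: once \eqref{eq:e1}--\eqref{eq:eyl} are established, the relation $z^2=e_1+e_x+e_y-e_{xy}$ forces $\big(\Rr_\omega^{-1}(z\otimes1)\Rr_\omega\big)^2=\Rr_\omega^{-1}(z^2\otimes1)\Rr_\omega$ to equal the corresponding combination of the right-hand sides of \eqref{eq:e1}, \eqref{eq:exy}, \eqref{eq:exl}, \eqref{eq:eyl}, which serves as a consistency check on \eqref{eq:zl}.

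The computations rest on three structural facts about the presentation via idempotents. First, $e_1,e_x,e_y,e_{xy}$ are pairwise orthogonal idempotents summing to $1$. Second, since $zx=yz$ and $zy=xz$, conjugation by $z$ fixes the symmetric idempotents and interchanges the others: $ze_1=e_1z$, $ze_{xy}=e_{xy}z$, $ze_x=e_yz$ and $ze_y=e_xz$. Third, rewriting the defining relation $z^2=\tfrac12(1+x+y-xy)$ in the idempotent basis yields $z^2=e_1+e_x+e_y-e_{xy}$. Orthogonality is what makes the whole scheme tractable: multiplying the four-line expression for $\Rr_\omega$ on the left or right by a factor $e_g\otimes1$ annihilates every summand whose first tensorand does not meet $e_g$, so that only a handful of terms survive in each case.

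For \eqref{eq:e1} and \eqref{eq:exy} I would check directly that $e_1\otimes1$ and $e_{xy}\otimes1$ commute with $\Rr_\omega$: left- and right-multiplication by these idempotents isolate the same terms (the $z\otimes z$ block drops out because $e_1,e_{xy}$ annihilate $e_x,e_y$), so the two products coincide. Identities \eqref{eq:exl} and \eqref{eq:eyl} are the genuinely two-sided cases: here the third and fourth lines of $\Rr_\omega$ survive, producing entries involving $e_xz$ and $e_yz$. One expands $(e_x\otimes1)\Rr_\omega$ and, on the other side, $\Rr_\omega\big[e_y\otimes1+(e_x-e_y)\otimes(e_1+e_{xy})\big]$, using $ze_x=e_yz$ to move the $z$'s into standard position, and matches them term by term; the summand $(e_x-e_y)\otimes(e_1+e_{xy})$ is exactly what cancels the unwanted $e_y\otimes e_1z$ and $e_y\otimes e_{xy}z$ contributions. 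The case \eqref{eq:eyl} follows by the symmetry $x\leftrightarrow y$.

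The main obstacle is \eqref{eq:zl}, the only formula in which $z$ itself is conjugated. Forming $(z\otimes1)\Rr_\omega$ forces the leading $z$ past the $z\otimes z$ block of $\Rr_\omega$, so that products $z\cdot e_gz=(ze_g)z$ appear; these collapse through $z^2=e_1+e_x+e_y-e_{xy}$ (for instance $z\,e_xz=e_yz^2=e_y$), converting products of $z$ into idempotents. The delicate bookkeeping is tracking how the phases $\omega^{\pm1}$ carried by the $z\otimes z$ terms recombine, after these collapses, into the single $\omega^2$ coefficient and the two idempotent blocks $(e_x+e_y)\otimes(e_x+e_y-2e_{xy})$ and $(e_x-e_y)\otimes(e_x-e_y)$ on the right. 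I would again verify the equivalent identity $(z\otimes1)\Rr_\omega=\Rr_\omega(z\otimes1)\big[1\otimes1-(e_x+e_y)\otimes(e_x+e_y-2e_{xy})-\omega^2(e_x-e_y)\otimes(e_x-e_y)\big]$ by expanding both sides in the idempotent basis, reducing every occurrence of $z^2$, and comparing; getting all signs and powers of $\omega$ to match is the only real difficulty, and the square-consistency check noted above guards against arithmetic slips.
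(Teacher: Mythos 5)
Your plan is correct in substance and takes a genuinely different route from the paper. The paper first produces the inverse explicitly, via the quasitriangular identity $\Rr_{\omega}^{-1}=(S\otimes\mathrm{Id})(\Rr_{\omega})$ and the fact that $S$ fixes the $e_g$'s and $z$, obtaining $\Rr_{\omega}^{-1}=\Rr_{\omega}^{00}+(z\otimes1)\Rr_{\omega}^{10}+\Rr_{\omega}^{01}(1\otimes z)+(z\otimes1)\Rr_{\omega}^{11}(1\otimes z)$, and then conjugates each element directly. You never compute $\Rr_{\omega}^{-1}$: you check the equivalent identities $T\Rr_{\omega}=\Rr_{\omega}T'$, which replaces this Hopf-theoretic input by pure multiplication in $H_8\otimes H_8$; both arguments then run on the same structural facts (orthogonality of the $e_g$'s, $e_xz=ze_y$, $e_yz=ze_x$, and $z^2=e_1+e_x+e_y-e_{xy}$). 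Your scheme does go through: for instance, for \eqref{eq:exl} both $(e_x\otimes1)\Rr_{\omega}$ and $\Rr_{\omega}\bigl[e_y\otimes1+(e_x-e_y)\otimes(e_1+e_{xy})\bigr]$ reduce to $e_x\otimes e_1z+\omega^2e_x\otimes e_{xy}z+\omega^{-1}e_xz\otimes e_xz+\omega e_xz\otimes e_yz$.

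Two corrections, one minor and one substantive. Minor: \eqref{eq:eyl} does \emph{not} follow from \eqref{eq:exl} ``by the symmetry $x\leftrightarrow y$'', because $\Rr_{\omega}$ is not invariant under that swap (the $\omega^2$-terms in its $z\otimes1$ and $1\otimes z$ blocks change sign). The correct cheap argument is additivity: conjugation fixes $1\otimes1=\sum_g e_g\otimes1$, so \eqref{eq:eyl} is forced by \eqref{eq:e1}, \eqref{eq:exy} and \eqref{eq:exl}. Substantive: if you carry out your verification of \eqref{eq:zl} literally as displayed, it will fail, because the displayed formula has a sign typo; the correct identity is
\[
\Rr_{\omega}^{-1}(z \otimes 1) \Rr_{\omega}=(z \otimes 1) \bigl[1 \otimes 1-(e_x+e_y) \otimes (e_x+e_y+2e_{xy})-\omega^2(e_x -e_y) \otimes (e_x-e_y)\bigr],
\]
which is exactly what the final line of the paper's own computation produces (the lemma and its proof disagree on this sign). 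Pleasingly, the consistency check you propose detects this: writing the candidate as $(z\otimes1)(1\otimes1+C)$ and using $za=\sigma(a)z$, where $\sigma$ swaps $e_x\leftrightarrow e_y$ and fixes $e_1,e_{xy}$, the requirement that the square equal $z^2\otimes1$ reads $C+(\sigma\otimes\mathrm{Id})(C)+(\sigma\otimes\mathrm{Id})(C)\,C=0$; this holds for the $+2e_{xy}$ version, but leaves the nonzero remainder $8\,(e_x+e_y)\otimes e_{xy}$ for the printed $-2e_{xy}$ version. So your method is sound; just expect it to end by proving (and thereby correcting) the identity with $+2e_{xy}$.
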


\begin{proof}
Let us define
\begin{eqnarray*}
\Rr_{\omega}^{00}&:=&e_1 \otimes e_1 + e_1 \otimes e_{xy}+ e_{xy}\otimes e_1 - e_{xy}\otimes e_{xy}\\
\Rr_{\omega}^{10}&:=&e_1 \otimes e_x + e_1 \otimes e_y- \omega^2 e_{xy} \otimes e_x + \omega^2 e_{xy}\otimes e_y\\
\Rr_{\omega}^{01}&:=&e_x \otimes e_1 + e_y \otimes e_1 + \omega^2 e_x \otimes e_{xy} -\omega^2 e_y \otimes e_{xy}\\
\Rr_{\omega}^{11}&:=&\omega^{-1}e_x \otimes e_x + \omega e_x \otimes e_y + \omega e_y \otimes e_x + \omega^{-1}e_y \otimes e_y,
\end{eqnarray*}
so that
\[\Rr_{\omega}=\Rr_{\omega}^{00}+\Rr_{\omega}^{10}(z \otimes 1)+\Rr_{\omega}^{01}(1 \otimes z)+\Rr_{\omega}^{11}(z \otimes z).\]
Clearly each $\Rr_{\omega}^{ij}$ ($i,j=0,1$) is in the center of $A \otimes A$ since it does not contain $z$ and $x$ and $y$ commute. Moreover, the antipode $S$ is the identity on $x$ and $y$, hence $(S \otimes \textrm{Id})(\Rr_{\omega}^{ij})=\Rr_{\omega}^{ij}$. Thus, we can compute
\[\Rr_{\omega}^{-1}=(S \otimes \textrm{Id})(\Rr_{\omega})=\Rr_{\omega}^{00}+(z \otimes 1)\Rr_{\omega}^{10}+\Rr_{\omega}^{01}(1 \otimes z)+(z \otimes 1)\Rr_{\omega}^{11}(1 \otimes z).\]
It is straightforward to check that $e_1z=ze_1$, $e_xz=ze_y$, $e_yz=ze_x$ and $e_{xy}z=ze_{xy}$. Then \eqref{eq:e1} and \eqref{eq:exy} follow immediately.
Next, since the $e_g$'s (with $g\in G$), are pairwise orthogonal, we have
\begin{eqnarray*}
\Rr_{\omega}^{-1}(e_x \otimes 1)\Rr_{\omega}-e_y \otimes 1&=&\Rr_{\omega}^{-1}(e_x \otimes 1)(\Rr_{\omega}^{00}+\Rr_{\omega}^{10}(z \otimes 1)+\Rr_{\omega}^{01}(1 \otimes z)+\Rr_{\omega}^{11}(z \otimes z))-e_y \otimes 1\\
&=&\Rr_{\omega}^{-1}(\Rr_{\omega}^{00}(e_x \otimes 1)+\Rr_{\omega}^{10}(e_xz \otimes 1)+\Rr_{\omega}^{01}(e_x \otimes z)+\Rr_{\omega}^{11}(e_x z \otimes z))-e_y \otimes 1\\
&=&\Rr_{\omega}^{-1}(\Rr_{\omega}^{00}(e_y \otimes 1)+\Rr_{\omega}^{10}(ze_y \otimes 1)+\Rr_{\omega}^{01}(e_x \otimes z)+\Rr_{\omega}^{11}( ze_y \otimes z))-e_y \otimes 1\\
&=&\Rr_{\omega}^{-1}\Rr_{\omega}(e_y \otimes 1) -\Rr_{\omega}^{-1}\Rr_{\omega}^{01}(e_y \otimes z)+\Rr_{\omega}^{-1}\Rr_{\omega}^{01}(e_x \otimes z)-e_y \otimes 1\\
&=&\Rr_{\omega}^{-1}\Rr_{\omega}^{01}[(e_x-e_y) \otimes z]\\
&=&\Rr_{\omega}^{-1}[(e_x - e_y) \otimes e_1 + \omega^2 (e_x + e_y) \otimes e_{xy}](1 \otimes z)\\
&=&[\Rr_{\omega}^{01}+(z \otimes 1)\Rr_{\omega}^{11}][(e_x - e_y) \otimes e_1 + \omega^2 (e_x + e_y) \otimes e_{xy}](1 \otimes z^2)\\
&=&\Rr_{\omega}^{01}[(e_x - e_y) \otimes e_1 + \omega^2 (e_x + e_y) \otimes e_{xy}](1 \otimes z^2)\\
&=&(e_x - e_y) \otimes (e_1 + \omega^4 e_{xy})z^2\\
&=&(e_x - e_y) \otimes (e_1 - e_{xy})(e_1+e_x+e_y-e_{xy})\\
&=&(e_x - e_y) \otimes (e_1 + e_{xy}),\\
\end{eqnarray*}
using that $z^{2}=e_{1}+e_{x}+e_{y}-e_{xy}$, so \eqref{eq:exl} is proven. Equation \eqref{eq:eyl} can be obtained with similar calculations. Finally, in view of the fact that also the $\Rr^{ij}_{\omega}$'s are pairwise orthogonal and $z^{4}=1$, we have 
\begin{eqnarray*}
\Rr_{\omega}^{-1}(z \otimes 1) \Rr_{\omega}&=&[\Rr_{\omega}^{00}(z \otimes 1)+(z \otimes 1)\Rr_{\omega}^{10}(z \otimes 1)+\Rr_{\omega}^{01}(z \otimes z)+(z \otimes 1)\Rr_{\omega}^{11}(z \otimes z)]\Rr_{\omega}\\
&=&[(z \otimes 1)\Rr_{\omega}^{00}+(z^2 \otimes 1)\Rr_{\omega}^{10}+(z \otimes 1)[(e_x+e_y) \otimes e_1 + \omega^2 (e_y-e_x) \otimes e_{xy}](1 \otimes z)+\\
&+&(z^2 \otimes 1)[(\omega^{-1}e_y+\omega e_x) \otimes e_x + (\omega e_y   + \omega^{-1}e_x) \otimes e_y](1 \otimes z)]\Rr_{\omega}\\
&=&\left[(z \otimes 1)\Rr_{\omega}^{00}+(z^2 \otimes 1)\Rr_{\omega}^{10}+(z \otimes z)[(e_x+e_y) \otimes e_1 + \omega^2 (e_y-e_x) \otimes e_{xy}]+ \right.\\
&+&\left. (z^2 \otimes z)[(\omega^{-1}e_y+\omega e_x) \otimes e_y + (\omega e_y   + \omega^{-1}e_x) \otimes e_x]\right]\Rr_{\omega}\\
&=&(z \otimes 1)(\Rr_{\omega}^{00})^2+(z^2 \otimes 1)(\Rr_{\omega}^{10})^2(z \otimes 1)+(z \otimes z)[(e_x+e_y) \otimes (e_1 +e_{xy})](1 \otimes z)+ \\
&+& (z^2 \otimes z)[(\omega^{-2}e_y+\omega^2 e_x) \otimes e_y + (\omega^2 e_y   + \omega^{-2}e_x) \otimes e_x](z \otimes z)\\
&=&(z \otimes 1)[(e_1+e_{xy})\otimes(e_1+e_{xy})]+(z^2 \otimes 1)[(e_1-e_{xy})\otimes(e_x+e_y)](z \otimes 1)+\\
&+&(z \otimes z)[(e_x+e_y) \otimes (e_1 +e_{xy})](1 \otimes z)+\omega^2 (z^2 \otimes z)[(e_x- e_y) \otimes (e_y-e_x)](z \otimes z)\\
&=&(z \otimes 1)[(e_1+e_{xy})\otimes(e_1+e_{xy})]+(z \otimes 1)[(e_1+e_{xy})\otimes(e_x+e_y)]+\\
&+&(z \otimes 1)[(e_x+e_y) \otimes (e_1 -e_{xy})]+\omega^2 (z^3 \otimes z^2)[(e_x- e_y) \otimes (e_y-e_x)]\\
&=&(z \otimes 1)[(e_1+e_{xy})\otimes(e_1+e_{xy})]+(z \otimes 1)[(e_1+e_{xy})\otimes(e_x+e_y)]+\\
&+&(z \otimes 1)[(e_x+e_y) \otimes (e_1 -e_{xy})]+\omega^2 (z \otimes 1)[(e_x- e_y) \otimes (e_y-e_x)]\\
&=&(z \otimes 1)[1\otimes 1-(e_x+e_y) \otimes (e_x+e_y+2e_{xy})-\omega^2 (e_x- e_y) \otimes (e_x-e_y)]\\
\end{eqnarray*}
and therefore \eqref{eq:zl} holds true.
\end{proof}

\begin{remark}\label{conjugacy}
An immediate consequence of the previous lemma is that $\Rr_{\omega}^{-1}(a \otimes 1) \Rr_{\omega} \in A \otimes A$ and $\Rr_{\omega}^{-1}(za \otimes 1) \Rr_{\omega} \in zA \otimes A$ for any $a \in A$.
For a more detailed result one can  further observe that
\[\Rr_{\omega}^{-1}(e_x \otimes 1)\Rr_{\omega}=e_y \otimes 1+(e_x-e_y) \otimes (e_1+e_{xy})=\frac{1}{2}(e_x+e_y) \otimes 1+ \frac{1}{2}(e_x-e_y) \otimes xy\]
and
\[\Rr_{\omega}^{-1}(e_y \otimes 1)\Rr_{\omega}=e_x \otimes 1-(e_x-e_y) \otimes (e_1+e_{xy})=\frac{1}{2}(e_x+e_y)\otimes 1-\frac{1}{2}(e_x-e_y) \otimes xy.\]
Then it becomes clear that $\Rr_{\omega}^{-1}(a \otimes 1) \Rr_{\omega} \in A \otimes (\Bbbk1 \oplus \Bbbk xy)$ for any $a \in A$.
\end{remark}
In order to show that every infinitesimal $\Rr$-matrix $\chi$ for $(H_8,\Rr_{\omega})$ is zero, we can assume $\chi$ to be a 2-cocycle of $H_8$ for the cohomology defined in Section \ref{sub:Hoch}. Indeed, we recall that an infinitesimal $\Rr$-matrix $\chi$ for $H_8$ is an element of $\mathrm{Z}^2(H_8,\Bbbk)$ 
that satisfies \eqref{cqtr1} and \eqref{cqtr2} (see Proposition \ref{prop:newcharacterization}). Since the base field contains the 4th root of the unity $\omega^2$, by \cite[Remark 2.14 (3)]{Ma} $H_8$ is cosemisimple, and then coseparable \cite[Theorem, p. 265]{Lar73}. By \cite[Theorem 3]{Doi}, 
it follows that $\mathrm{H}^{2}(H_{8},\Bbbk)=\{0\}$, i.e. $\mathrm{Z}^{2}(H_{8},\Bbbk)=\mathrm{B}^{2}(H_{8},\Bbbk)$.

Hence, to prove that $\chi=0$ is the only infinitesimal $\Rr$-matrix for $(H_{8},\Rr_{\omega})$, we now impose axiom \eqref{cqtr2} on elements of the form $b^1(u+zv)$, with $u, v \in A$. 

\begin{proposition}
The Hopf algebra $(H_8, \Rr_{\omega})$ has no non-zero infinitesimal $\Rr$-matrices.
\end{proposition}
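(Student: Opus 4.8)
The plan is to feed the classification of Theorem~\ref{2cocyclesH8} into axiom \eqref{cqtr2}. Let $\chi$ be an infinitesimal $\Rr$-matrix for $(H_8,\Rr_\omega)$. By \cite[Theorem 2.21]{ABSW} it is a Hochschild $2$-cocycle, so Theorem~\ref{2cocyclesH8} gives $\chi=b^{1}(w)$ for some $w\in H_8$; writing $w=u+zv$ along $H_8=A\oplus zA$, with $u,v\in A$, we have $\chi=1\otimes w+w\otimes 1-\Delta(w)$. It suffices to show that \eqref{cqtr2} forces $\chi=0$ (equivalently, since $b^{1}$ is injective, that $w=0$: its kernel is $\mathrm{Z}^{1}(H_8,\Bbbk)=P(H_8)$, which vanishes as $H_8$ is semisimple and cosemisimple in characteristic $0$). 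Note that \eqref{eq:eps-chi1} already yields $\varepsilon(w)=0$.

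The decomposition $H_8=A\oplus zA$ is a $\mathbb{Z}/2$-algebra grading ($A$ even, $zA$ odd), and it is respected both by $\Delta$ — which sends $A$ to $A\otimes A$ and $zA$ to $zA\otimes zA$ — and, crucially, by the conjugation $a\mapsto\Rr_\omega^{-1}(a\otimes 1)\Rr_\omega$, which by Remark~\ref{conjugacy} sends $A$ into $A\otimes A$ and $zA$ into $zA\otimes A$. Grading $H_8^{\otimes 3}$ by the $\mathbb{Z}/2$-degree of each tensor leg, I would observe that $(\mathrm{Id}\otimes\Delta)(\chi)$, $\chi_{12}$, and $(\Rr_\omega)_{12}^{-1}\chi_{13}(\Rr_\omega)_{12}$ are all sums of leg-homogeneous elements; for the last term the third leg is a spectator for $(\Rr_\omega)_{12}$ and Remark~\ref{conjugacy} keeps the second leg in $A$. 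Hence \eqref{cqtr2} splits into independent components, to be analysed one leg-degree at a time.

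The component of leg-degree $(1,1,1)$ is decisive. On the right-hand side $\chi_{12}$ has third leg $1$ and the conjugated $\chi_{13}$ has second leg in $A$, so neither contributes in this degree; on the left-hand side only the bidegree-$(1,1)$ part $-\Delta(zv)\in zA\otimes zA$ of $\chi$ survives $(\mathrm{Id}\otimes\Delta)$ here. Thus the $(1,1,1)$-component reads $(\mathrm{Id}\otimes\Delta)\Delta(zv)=0$, and applying $\varepsilon\otimes\mathrm{Id}\otimes\varepsilon$ yields $zv=0$, i.e. $v=0$. We are left with $\chi=b^{1}(u)\in A\otimes A$, whose only nonzero leg-degree is $(0,0,0)$, where \eqref{cqtr2} becomes the closed condition $(\mathrm{Id}\otimes\Delta)(b^{1}u)=(b^{1}u)_{12}+(\Rr_\omega)_{12}^{-1}(b^{1}u)_{13}(\Rr_\omega)_{12}$ inside $A^{\otimes 3}$. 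Expanding $u$ in the idempotent basis $e_1,e_x,e_y,e_{xy}$, evaluating the conjugation through \eqref{eq:e1}--\eqref{eq:eyl}, and comparing coefficients in the basis $\{x^iy^j\}$ of $A$ gives $b^{1}(u)=0$. Therefore $\chi=b^{1}(u)+b^{1}(zv)=0$.

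The technical heart is the control of the conjugation $\Rr_\omega^{-1}(\,\cdot\otimes 1)\Rr_\omega$, i.e. Lemma~\ref{lemma:useful}; in particular the qualitative content of Remark~\ref{conjugacy} that this map keeps the second leg in $A$ — whose proof rests on the intricate identity \eqref{eq:zl} — is exactly what makes the grading split legitimate and collapses the entire $z$-dependent part of $\chi$. I expect the main obstacle to be organising this $z$-sector correctly: without the grading one must impose \eqref{cqtr2} on $b^{1}(u+zv)$ head-on and carry the bulky terms produced by \eqref{eq:zl} and by $\Delta(zv)\in zA\otimes zA$. The residual even computation on $u$ is comparatively routine and parallels the mechanism already used for $(H_8,\Rr_{\alpha,\beta})$ and for $H_{2n^2}$ in Theorem~\ref{prop:H2n2}.
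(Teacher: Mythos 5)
Your proposal is correct and takes essentially the same approach as the paper: reduce to $\chi=b^{1}(u+zv)$ via Theorem~\ref{2cocyclesH8}, control the conjugation $\Rr_{\omega}^{-1}(\,\cdot\,\otimes 1)\Rr_{\omega}$ through Lemma~\ref{lemma:useful} and Remark~\ref{conjugacy}, use the resulting $\mathbb{Z}/2$-graded structure of \eqref{cqtr2} to force $zv=0$, and finish with a finite coefficient comparison in $A^{\otimes 3}$. The only immaterial differences are that you extract $zv=0$ from the $(1,1,1)$ leg-degree component whereas the paper uses the lone degree-$(0,1,0)$ term $1\otimes zv\otimes 1$, and that you leave the concluding computation on $u$ as a routine check which the paper carries out explicitly (obtaining $u_{10}=u_{01}=0$ and then $u_{00}=u_{11}=0$).
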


\begin{proof}
Consider an infinitesimal $\Rr$-matrix $\chi$ for $(H_8, \Rr_{\omega})$. 
As already observed, $\chi$ must be a $2$-coboundary of $H_8$. Therefore $\chi=b^1(u+zv)=1 \otimes u + 1 \otimes zv +u \otimes 1 +zv \otimes 1-\Delta(u)-\Delta(zv)$ for some $u, v \in A$.
We have 
\begin{eqnarray*}
(\textrm{Id} \otimes \Delta)(\chi)&=&1 \otimes \Delta(u) + 1 \otimes \Delta(zv) +u \otimes 1 \otimes 1 +zv \otimes 1 \otimes 1-(\textrm{Id} \otimes \Delta)\Delta(u)-(\textrm{Id} \otimes \Delta)\Delta(zv),\\
\chi_{12}&=&1 \otimes u \otimes 1 + 1 \otimes zv \otimes 1 +u \otimes 1 \otimes 1 +zv \otimes 1 \otimes 1-\Delta(u) \otimes 1-\Delta(zv) \otimes 1,\\
(\Rr_{\omega}^{-1})_{12}\chi_{13}(\Rr_{\omega})_{12}&=&1 \otimes 1 \otimes  u + 1 \otimes 1 \otimes zv +\Rr_{\omega}^{-1}(u \otimes 1)\Rr_{\omega}\otimes 1 +\Rr_{\omega}^{-1}(zv \otimes 1)\Rr_{\omega} \otimes 1\\
&-&\Rr_{\omega}^{-1}(u_1 \otimes 1)\Rr_{\omega} \otimes u_2-\Rr_{\omega}^{-1}(z_1v_1 \otimes 1)\Rr_{\omega} \otimes z_2v_2,
\end{eqnarray*}
hence \eqref{cqtr2} is equivalent to
\begin{equation}\label{eq:ax5}
\begin{split}
   1 \otimes &\Delta(u) + 1 \otimes \Delta(zv) -(\textrm{Id} \otimes \Delta)\Delta(u)-(\textrm{Id} \otimes \Delta)\Delta(zv)=1 \otimes u \otimes 1 + 1 \otimes zv \otimes 1 -\Delta(u) \otimes 1\\&-\Delta(zv) \otimes 1 +1 \otimes 1 \otimes  u + 1 \otimes 1 \otimes zv +\Rr_{\omega}^{-1}(u \otimes 1)\Rr_{\omega}\otimes 1 +\Rr_{\omega}^{-1}(zv \otimes 1)\Rr_{\omega} \otimes 1\\
   &-\Rr_{\omega}^{-1}(u_1 \otimes 1)\Rr_{\omega} \otimes u_2-\Rr_{\omega}^{-1}(z_1v_1 \otimes 1)\Rr_{\omega} \otimes z_2v_2.
   \end{split}
\end{equation}
By Lemma~\ref{lemma:useful} we have that $\Rr_{\omega}^{-1}(u \otimes 1)\Rr_{\omega} \otimes 1 \in A \otimes A \otimes A$, $\Rr_{\omega}^{-1}(u_1 \otimes 1)\Rr_{\omega} \otimes u_2 \in A \otimes A \otimes A$, $\Rr_{\omega}^{-1}(zv \otimes 1)\Rr_{\omega} \otimes 1 \in zA \otimes A \otimes A$ and $\Rr_{\omega}^{-1}(z_1v_1 \otimes 1)\Rr_{\omega} \otimes z_2v_2 \in zA \otimes A \otimes zA$ (see Remark~\ref{conjugacy}). Moreover $\Delta(u) \in A \otimes A$ and $\Delta(zv) \in zA \otimes zA$, so it becomes clear that, by linear independence,  $1 \otimes zv \otimes 1=0$, i.e. $zv=0$. Then \eqref{eq:ax5} boils down to
\begin{equation}\label{eq:ax5'}
1 \otimes \Delta(u)  -(\textrm{Id} \otimes \Delta)\Delta(u)=1 \otimes u \otimes 1  -\Delta(u) \otimes 1 +1 \otimes 1 \otimes  u  +\Rr_{\omega}^{-1}(u \otimes 1)\Rr_{\omega}\otimes 1 -\Rr_{\omega}^{-1}(u_1 \otimes 1)\Rr_{\omega} \otimes u_2.
\end{equation}
Again, thanks to Lemma~\ref{lemma:useful}, we can easily see that the only term appearing in this equation that has $x$ in every tensorand is $-u_{10}(x \otimes x \otimes x)$ and the only one with $y$ in every tensorand is $-u_{01}(y \otimes y \otimes y)$ (see again Remark~\ref{conjugacy}). From this we immediately get $u_{10}=u_{01}=0$, so that $u=u_{00}1+u_{11}xy$. Moreover, since $zxy=xyz$, \eqref{eq:ax5'} reduces to
\[
\begin{split}
u_{11}1\otimes xy\otimes xy-u_{11}xy\otimes xy\otimes xy&=u_{00}1\otimes1\otimes1+u_{11}1\otimes xy\otimes 1-u_{11}xy\otimes xy\otimes1\\&+u_{11}1\otimes1\otimes xy+u_{11}xy\otimes1\otimes1-u_{11}xy\otimes 1\otimes xy
\end{split}
\]
and then, by linear independence, it immediately follows that $u_{00}=u_{11}=0$.
\end{proof}

We have previously observed that also $(H_8, \Rr_{\alpha,\beta})$ has no non-zero infinitesimal $\Rr$-matrices, therefore we can state the following:

\begin{theorem}\label{thm:H8}
The Hopf algebra $H_8$ has no non-zero infinitesimal $\Rr$-matrices.
\end{theorem}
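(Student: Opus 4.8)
The plan is to reduce the claim to the two families of quasitriangular structures on $H_8$ described above, namely $\Rr_{\alpha,\beta}$ with $\alpha,\beta\in\{-1,1\}$ and $\Rr_{\omega}$ with $\omega$ a primitive $8$-th root of unity, and then argue that in each case the only infinitesimal $\Rr$-matrix is the trivial one $\chi=0$. Since the preceding results handle exactly these two cases, the final theorem is essentially an assembly of earlier statements, and the proof should consist of citing them in the right order.

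First I would recall that, by \cite[Lemma 5.4]{Wa3} (rewritten in the presentation used above), every universal $\Rr$-matrix of $H_8$ is either of the form $\Rr_{\alpha,\beta}$ or of the form $\Rr_{\omega}$; there are no other quasitriangular structures. Thus any pre-Cartier structure $(H_8,\Rr,\chi)$ has its $\Rr$ lying in one of these two families, and it suffices to treat each family separately.

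Next I would dispatch the first family: as was observed in the paragraph following Remark~\ref{conjugacy}, the structures $\Rr_{\alpha,\beta}$ do not involve the generator $z$ and have the same shape as the quasitriangular structures of $H_{2n^2}$ for $n\geq 3$. Consequently the argument of Theorem~\ref{prop:H2n2}, which uses only the fact that $z$ does not appear in $\Rr$, applies verbatim to show that every infinitesimal $\Rr$-matrix for $(H_8,\Rr_{\alpha,\beta})$ is trivial. For the second family I would invoke the Proposition immediately preceding this theorem, which establishes that $(H_8,\Rr_{\omega})$ admits no non-trivial infinitesimal $\Rr$-matrices; that proposition in turn rests on the classification $\mathrm{Z}^2(H_8,\Bbbk)=\mathrm{B}^2(H_8,\Bbbk)$ from Theorem~\ref{2cocyclesH8} together with the conjugation formulas of Lemma~\ref{lemma:useful}.

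Combining the two cases, every quasitriangular structure on $H_8$ admits only the trivial infinitesimal $\Rr$-matrix, which is exactly the assertion of Theorem~\ref{thm:H8}. The only real work has already been carried out in Theorem~\ref{prop:H2n2}, Theorem~\ref{2cocyclesH8}, and the preceding proposition; the main (and only mild) subtlety here is simply to be sure the list of $\Rr$-matrices from \cite{Wa3} is genuinely exhaustive, so that no quasitriangular structure escapes one of the two treated cases. Given that exhaustiveness, the theorem follows immediately and the proof is a one-line synthesis.
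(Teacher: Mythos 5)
Your proposal is correct and follows exactly the paper's own route: the paper also obtains Theorem~\ref{thm:H8} by combining the adaptation of Theorem~\ref{prop:H2n2} for the structures $\Rr_{\alpha,\beta}$ (which do not involve $z$) with the preceding proposition for $(H_8,\Rr_{\omega})$, relying on the exhaustiveness of Wakui's classification of the quasitriangular structures of $H_8$.
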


\begin{remark}
    We point out that Theorem \ref{prop:H2n2} and Theorem \ref{thm:H8} can be recovered from general results, in case $\Bbbk$ is algebraically closed of characteristic 0. This observation was made by M. Faitg, A. Gainutdinov and C. Schweigert, after our paper appeared online. In \cite[Definition 4.1]{FGS} a notion of infinitesimal braiding \textit{tangent} to a braiding $c$ is introduced. As it is said in \cite[Remark 4.2]{FGS}, $t$ is an infinitesimal braiding in the sense of \cite[Definition 1.1]{ABSW} if and only if $ct$ is a braiding tangent to $c$. Considered the braided monoidal category $_{H}\Mm$ of modules over a quasitriangular bialgebra $(H,\Rr)$, we have that $\chi$ is an infinitesimal $\Rr$-matrix if and only if $\Rr\chi$ satisfies equation (128) in \cite[p. 46]{FGS}, see \cite[Remark 5.2]{FGS}. 
    As it is said in \cite[p. 5]{FGS}, using \cite[Theorem 2]{FGS} and \textit{Ocneanu rigidity} \cite[§7]{ENO}, \cite[§3.5]{GHS}, one obtains that there are no non-zero infinitesimal braidings tangent to a braiding $c$ for a semisimple finite category $\Cc$, if the base field $\Bbbk$ is algebraically closed of characteristic 0. Therefore, there are no non-zero infinitesimal braidings for $_{H}\Mm$ under such assumptions. Hence, if the base field $\Bbbk$ is algebraically closed of characteristic 0, there are no non-zero infinitesimal $\Rr$-matrices for a semisimple quasitriangular Hopf algebra $(H,\Rr)$.
\end{remark}

\subsection{The Radford Hopf algebras $H_{(r,n,q)}$}\label{radfordalgebras}
We recall from \cite[p. 477]{Rad3} the construction of the so-called Radford pointed Hopf algebras introduced in \cite{Rad}. Let $1\leq r,n\in\mathbb{N}$, $M=rn$ and suppose that $\Bbbk$ is a field of characteristic $\mathrm{char}(\Bbbk)\neq 2$ with a primitive $M$-th root of unity $q$. Let $H=H_{(r,n,q)}$ be the Hopf algebra over $\Bbbk$ described as follows. As an algebra $H$ is generated by $g$ and $x$ subject to the relations
\[
g^{M}=1,\quad x^{n}=0,\quad xg=qgx
\]
and the coalgebra structure of $H$ is given by 
\[
\Delta(g)=g\otimes g,\quad \Delta(x)=1\otimes x+x\otimes g^{r},\quad \varepsilon(g)=1_{\Bbbk},\quad \varepsilon(x)=0. 
\]
The antipode is given by $S(g)=g^{M-1}$, $S(x)=-xg^{M-r}$. As a $\Bbbk$-vector space $H_{(r,n,q)}$ has dimension $rn^{2}$ with basis $\{g^{l}x^{m}\ |\ 0\leq l<M,\,0\leq m<n\}$. Let us recall that 
\[
\Delta(g^{l}x^{m})=\sum_{u=0}^{m}{\binom{m}{u}_{Q}g^{l}x^{m-u}\otimes g^{l+r(m-u)}x^{u}}
\]
for all $0\leq l<M$ and $0\leq m<n$, where $Q=q^{r}$ and $\left( \ \right)_Q$ is used to denote a $Q$-binomial symbol (or gaussian coefficient). We have that $G:=G(H)=\{g^{i}\ |\ 0\leq i< M\}$. 

Let us observe that if $r=1$ we recover Taft Hopf algebras, which do not admit a quasitriangular structure for $n>2$, as it is shown in \cite{Ge2}. On the other hand, quasitriangular structures for some of the Radford Hopf algebras with $r \geq 2$ were classified in \cite{Rad}. 
However, we prove that $\chi=0$ is the only infinitesimal $\Rr$-matrix for any Radford Hopf algebra $H_{(r,n,q)}$ which is quasitriangular. This does not depend on the specific form of the quasitriangular structure.

\begin{theorem}\label{prop:Rad}
    The Radford Hopf algebra $H:=H_{(r,n,q)}$, with $r\geq2$, has no non-zero infinitesimal $\Rr$-matrices.
\end{theorem}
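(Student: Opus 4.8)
The plan is to mimic the structure of the proofs for $(A''_{C_4})^*$ and $H_{2n^2}$, exploiting the decomposition of $H$ as a free module over its group algebra part. First I would set $G=\{g^i \mid 0\le i<M\}$ and $A:=\Bbbk G$ (viewed just as an algebra), so that $H=\bigoplus_{m=0}^{n-1} x^m A$ as a vector space. Writing a general $\chi\in H\otimes H$ in terms of this grading, the key feature to exploit is the commutation relation $xg=qgx$ with $q$ a primitive $M$-th root of unity and $M=rn$, $r\ge 2$, so that conjugation by $g$ acts on the $x$-degree-$m$ component as multiplication by $q^{-m}$ (or $Q^{-m}$ on appropriate pieces). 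Since $r\ge 2$ forces $q^{rs}\ne 1$ for suitable $s$, the group-like $g^r$ appearing in $\Delta(x)=1\otimes x+x\otimes g^r$ will be the lever that kills off mixed terms.

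The main steps I would carry out, in order, are as follows. Step one: impose the centrality axiom \eqref{cqtr1} on the group-like generator $g$, i.e. $g\chi^i g^{-1}\otimes g\chi_i g^{-1}=\chi^i\otimes\chi_i$. Because conjugation by $g$ multiplies the $(x^{m}\!\otimes x^{m'})$-bihomogeneous component by $q^{-m-m'}$, and $q$ is a primitive $M$-th root of unity, this forces $\chi$ to be supported on bidegrees with $m+m'\equiv 0 \pmod{M}$; with $0\le m,m'<n$ and $M=rn\ge 2n$, the only surviving possibility is $m=m'=0$, so $\chi\in A\otimes A$, \emph{unless} one argues more carefully as in the $(A''_{C_4})^*$ case. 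So I would instead combine this with the Hochschild cocycle property \eqref{eq:Hoch2cocy} (valid since every infinitesimal $\Rr$-matrix is a Hochschild $2$-cocycle by \cite[Theorem 2.21]{ABSW}). Step two: use the projection $\pi:H\to A$ onto the $x$-degree-zero part. As in Proposition \ref{prop:chiAC2xC2}, this is a Hopf algebra map onto the commutative (cocommutative) Hopf algebra $A=\Bbbk G$, and \cite[Proposition 2.24]{ABSW} gives $(\pi\otimes\pi)(\chi)\in P(A)\otimes P(A)=0$, eliminating the pure $A\otimes A$ component. Step three: rule out the remaining mixed and top-degree terms. Having reduced to components involving $x$, I would feed the surviving shape of $\chi$ into \eqref{eq:Hoch2cocy}, track terms that are bihomogeneous of a fixed low $x$-degree in the outer tensorands, and use linear independence of the basis $\{g^l x^m\}$ together with the $Q$-binomial comultiplication formula to force all coefficients to vanish.

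The hard part will be Step three, the combinatorial bookkeeping in \eqref{eq:Hoch2cocy}. Unlike the dimension-$8$ cases, here $x$ has nilpotency order $n$ (not just $x^2=0$), so the comultiplication $\Delta(g^l x^m)=\sum_u \binom{m}{u}_Q g^l x^{m-u}\otimes g^{l+r(m-u)}x^u$ produces many cross terms, and I must argue that the cocycle condition, read off degree by degree in the $x$-grading, recursively kills every higher component. The cleanest route is probably to isolate, for each fixed total $x$-degree, the extreme terms (those with all of $x$ concentrated in a single tensorand, arising from $\delta^0$ and $\delta^{n+1}$ in the cobar differential) which cannot be cancelled by the internal $\Delta$-terms, and show these force the corresponding coefficients to zero. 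I expect that the nonvanishing of the relevant $q$-powers — guaranteed precisely by $r\ge 2$, which makes $g^r$ a nontrivial group-like of order $n$ — is what makes the linear-independence arguments go through, exactly paralleling the role of $\omega\ne 1$ in the $(A''_{C_4})^*$ proof. Finally, once $\chi$ is shown to lie in $A\otimes A$ with no primitive content, triviality follows, and I would note, as in Theorem \ref{prop:H2n2}, that this conclusion is independent of the explicit quasitriangular structure of \cite{Rad}.
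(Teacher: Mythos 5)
Your Step one is already a complete argument, and it is exactly the paper's own first step: conjugation by $g$ scales the $Ax^{m}\otimes Ax^{m'}$ component of $\chi$ by $q^{-(m+m')}$, and for $(m,m')\neq(0,0)$ one has $1\leq m+m'\leq 2n-2\leq rn-2=M-2<M$, so this scalar is never $1$ and \eqref{cqtr1} on $g$ alone forces $\chi\in A\otimes A$. The hedge ``unless one argues more carefully'' is unfounded; you should commit to this step. Once you do, your Step two finishes the proof outright: $\pi$ restricted to $A$ is the identity, so $(\pi\otimes\pi)(\chi)=\chi$, and \cite[Proposition 2.24]{ABSW} (after checking, exactly as in Proposition \ref{prop:chiAC2xC2}, that $g^{l}x^{m}\mapsto\delta_{m,0}g^{l}$ is a Hopf algebra map, which holds because $\Delta(g^{l}x^{m})$ has no purely group-like terms when $m>0$) yields $\chi\in P(\Bbbk G)\otimes P(\Bbbk G)=0$. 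This two-step route is correct and genuinely differs from the paper in its second half: the paper instead imposes \eqref{cqtr1} on $x$ to force $\chi=a\otimes b$ with $a=b=1$, and then discards $\chi=1\otimes 1$ because it violates \eqref{cqtr2}--\eqref{cqtr3} (equivalently \eqref{eq:eps-chi1}). Your projection argument replaces that computation by citing general machinery, and, like the paper's proof, never uses the explicit form of $\Rr$.

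The genuine defect is that you do not commit to this path, and your fallback Step three cannot work. If Step one is dropped, the components of $\chi$ involving $x$ can never be eliminated by the Hochschild cocycle condition \eqref{eq:Hoch2cocy} together with linear independence, because there exist nonzero Hochschild $2$-cocycles supported entirely on such components: every coboundary $b^{1}(g^{l}x^{m})$ is one, the simplest being $b^{1}(x)=x\otimes 1-x\otimes g^{r}$. No degree-by-degree bookkeeping in \eqref{eq:Hoch2cocy} can force the coefficient of such an element to vanish; what kills it is precisely the conjugation of Step one, which rescales it by $q^{-1}\neq 1$. So Step three is superfluous when Step one is in place and insufficient when it is not. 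With Step one reinstated as a firm step and Step three deleted, your proposal becomes a correct proof.
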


\begin{proof}
Set $G:=\{1,g,\ldots,g^{M-1}\}$ and $A:=\Bbbk G$, so that $H=A\oplus Ax\oplus Ax^{2}\oplus\cdots\oplus Ax^{n-1}$, hence
\[
H\otimes H=(A\otimes A)\oplus\cdots\oplus(A\otimes Ax^{n-1})\oplus\cdots\oplus(Ax^{n-1}\otimes A)\oplus\cdots\oplus(Ax^{n-1}\otimes Ax^{n-1}).
\]
The axiom \eqref{cqtr1} on $g$ is equivalent to $g\chi^{i}g^{-1}\otimes g\chi_{i}g^{-1}=\chi^{i}\otimes\chi_{i}$. But the conjugation on every summand (except $A\otimes A$) produces a scalar $q^{s}$ with $1\leq s\leq 2n-2$. Since $2\leq r$, we have $2n-2\leq rn-2=M-2<M$, hence $q^{s}$ is always different from 1 so that, by linear independence, we must have $\chi\in A\otimes A$. Writing $\chi=a\otimes b$ with $a,b\in A$ we impose \eqref{cqtr1} on $x$, i.e. 
\[
a\otimes xb+xa\otimes g^{r}b=\Delta(x)\chi=\chi\Delta(x)=a\otimes bx+ax\otimes bg^{r}. 
\]
By linear independence, looking at those terms with $x$ only in the right tensorand, we obtain $a\otimes xb=a\otimes bx$ and then $a=0$ or $xb=bx$. Suppose $a\not=0$, so $xb=bx$. Now, writing $b$ in terms of the basis of $A$, one deduces that $b=1$. Indeed, suppose $b$ contains an element $\alpha g^{i}$, with $0\not=i<M$ and $\alpha\in\Bbbk$, then, since $xg^{i}=q^{i}g^{i}x$, by linear independence one obtains $\alpha q^{i}g^{i}x=\alpha g^{i}x$, hence $\alpha=0$ and then $b=1$. Similarly, looking at those elements with $x$ only in the left tensorand, by linear independence we must have $xa\otimes g^{r}=ax\otimes g^{r}$. Applying $\varepsilon$ on the right tensorand we get $ax=xa$. By writing $a$ in terms of the basis of $A$, again by linear independence we obtain $a=1$. Then $\chi=1\otimes 1$ and \eqref{cqtr2} and \eqref{cqtr3} are not satisfied. Hence $a=0$, i.e. $H$ has only the infinitesimal $\Rr$-matrix $\chi=0$.
\end{proof}

\subsection{The Hopf algebras $E(n)$}\label{E(n)algebras}

We now classify infinitesimal $\Rr$-matrices for the family of Hopf algebras $E(n)$, which was obtained in \cite{BDG} by Ore extensions.  We first recall from \cite{CD} the definition and some properties of $E(n)$. 

Let us fix a field $\Bbbk$ of characteristic $\mathrm{char}(\Bbbk)\neq 2$ and a natural number $n\geq1$. The $2^{n+1}$-dimensional Hopf algebra $E(n)$ is generated as an algebra by $g,x_i$, $i=1, \ldots, n$, and relations 
\[
g^2=1,\qquad x_i^2=0, \qquad gx_i=-x_ig, \qquad x_ix_j=-x_jx_i,
\]
for every $i,j=1, \ldots, n$.
The Hopf algebra structure is defined by
\[\Delta(g)=g \otimes g, \qquad \Delta(x_i)=x_i \otimes 1 +g \otimes x_i,\qquad \varepsilon(g)=1_{\Bbbk},\qquad \varepsilon(x_i)=0,
\]
for all $i=1, \ldots, n$. Moreover, the antipode is given by $S(g)=g$ and $S(x_i)=-gx_i$, for all $i=1, \ldots, n$. For $P=\lbrace i_1,i_2,\ldots,i_s \rbrace \subseteq \lbrace 1,2, \ldots,n \rbrace$ such that $i_1 < i_2 < \cdots < i_s$, we denote $x_P = x_{i_1}x_{i_2}\cdots x_{i_s}$. If $P=\emptyset$ then $x_{\emptyset} = 1$. The set $\lbrace g^jx_P \ | \ P\subseteq \lbrace 1, \ldots ,n \rbrace, j \in \lbrace 0,1 \rbrace \rbrace$ is a basis of $E(n)$.
Let $F=\lbrace i_{j_1}, i_{j_2}, \ldots, i_{j_r} \rbrace$ be a subset of $P$ and define
\[S(F,P)=(j_1+\cdots+j_r)-\frac{r(r+1)}{2} \textrm{ and } S(\emptyset,P)=0.\]
Then, one can show that
\begin{equation}\label{deltagx}
\Delta(g^jx_P)=\sum_{F \subseteq P} (-1)^{S(F,P)} g^{|F|+j}x_{P \setminus F} \otimes g^jx_F,
\end{equation}
\begin{equation}\label{antipode}
S(g^jx_P) = (-1)^{|P|(j+1)}g^{|P|+j}x_P.
\end{equation}

Let us recall some facts about the $E(n)$:

\begin{itemize}
\item The Hopf algebras $E(n)$ are generalizations of well-known cases in literature. For example $E(1)$ is the Sweedler's Hopf algebra, and $E(2)$ is the 8-dimensional unimodular ribbon Hopf algebra introduced in \cite{Rad}. On the other hand, the $E(n)$ are a particular case of the Hopf algebras $A_{m,n}$, for positive integers $m,n$, defined by Radford in \cite{Rad4}. More precisely, we have $E(n) = A_{1,n}$. Since $A_{m,n}$ do not admit any quasitriangular structure if $m\geq 2$, see \cite[Theorem 3.2 and Remark 3.3]{N}, then they cannot be pre-Cartier for $m\geq 2$.
\item The $E(n)$ are pointed  and their coradical is $\Bbbk C_2$, cf. \cite[Proposition 1.16 (i)]{BDG}.
\item If $\Bbbk$ is algebraically closed and $\mathrm{char}(\Bbbk)=0$, then $E(n)$ is, up to isomorphism, the only $2^{n+1}$-dimensional pointed Hopf algebra with coradical $\Bbbk C_{2}$, see \cite[Theorem 2]{CD}.
\item The $E(n)$ are self-dual, i.e. $E(n) \cong E(n)^*$ as Hopf algebras, see \cite[Proposition 1]{PVO}.
\item $(E(n),\Rr)$ is a quasitriangular Hopf algebra if and only if there
exists $A \in M_n(\Bbbk)$ such that $\Rr=R_A$, where 
\begin{eqnarray}\label{RstructureE(n)}
R_A:=\frac{1}{2}(1 \otimes 1 + 1 \otimes g + g \otimes 1-g \otimes g)+\frac{1}{2}\Big[\sum_{|F|=|P|}(-1)^{\frac{|P|(|P|-1)}{2}}\det(P,F) \times \label{tristruct}\\ 
\times (g^{|P|}x_F \otimes x_P + g^{|P|}x_F \otimes gx_P +g^{|P|+1}x_F     \otimes x_P -  g^{|P|+1}x_F \otimes gx_P)\Big],  \nonumber
\end{eqnarray}
where $P$ and $F$ are two subsets of $\lbrace 1, \ldots, n \rbrace$ with $|F|=|P|$ and $\det(P,F)$ denotes the determinant of the matrix obtained at the intersection of the rows indexed by $P$, and the columns indexed by $F$, of the matrix $A$, see \cite[Proposition 2]{PVO}.
\item $(E(n),\Rr)$ is a triangular Hopf algebra if, and only if, $\Rr=R_A$ for some symmetric $A \in M_n(\Bbbk)$, see \cite[Proposition 2.1]{CC}, \cite[Proposition 7]{PVO}.
\end{itemize}

In order to classify infinitesimal $\Rr$-matrices for $E(n)$ we will use results concerning bosonization and cohomology of smash products. First we are going to show that $E(n)$ can be realized as the smash biproduct of the algebra generated by the skew-primitive elements $x_i$ and the group algebra $\Bbbk C_2$. We will use this decomposition to further prove that in the cohomology for coalgebras (recalled in Section \ref{sub:Hoch}) the group  $\mathrm{H}^{2}(E(n),\Bbbk)$ has dimension $\frac{n(n+1)}{2}$. In turn, this will help us to determine a decomposition of the space $\mathrm{Z}^{2}(E(n),\Bbbk)$ of 2-cocycles as a direct sum of the space $\mathrm{B}^{2}(E(n),\Bbbk)$ of $2$-coboundaries, with a suitable vector space. We will also show that this decomposition is inherited when we restrict our attention to infinitesimal $\Rr$-matrices: the ones who give $E(n)$ a Cartier structure are exactly those who are $2$-coboundaries.

\medskip

\subsubsection{Radford-Majid bosonization}
Let us recall that a \textit{Hopf algebra with a projection} \cite{Rad2} is a quadruple $(A,H,\gamma,\pi)$ where $A$ and $H$ are Hopf algebras and $\gamma:H\to A$ and $\pi:A\to H$ are Hopf algebra maps such that $\pi\circ\gamma=\mathrm{Id}_{H}$. Given a Hopf algebra with a projection $(A,H,\gamma,\pi)$, $A$ becomes an object in $\mathfrak{M}^{H}_{H}$, i.e. a right $H$-module, a right $H$-comodule such that the action is right $H$-colinear or, equivalently, the coaction is right $H$-linear. In particular, the $H$-action and the $H$-coaction are given by
\[
\mu:A\otimes H\to A,\ a\otimes h\mapsto a\gamma(h),\qquad \rho:A\to A\otimes H,\ a\mapsto a_{1}\otimes\pi(a_{2}). 
\]
Thus, denoting $R:=\{a\in A\,\vert\, \rho(a)=a\otimes 1_{H}\}$, by applying the Structure Theorem for Hopf modules, one obtains that
\[
\theta: R\otimes H\to A,\ a\otimes h\mapsto a\gamma(h)
\]
is an isomorphism in $\mathfrak{M}^{H}_{H}$, where  $R\otimes H$ is in $\mathfrak{M}^{H}_{H}$ with action $\mathrm{Id}_{R}\otimes m_{H}$ and coaction $\mathrm{Id}_{R}\otimes\Delta_{H}$. 
Since $\theta:R\otimes H\to A$ is bijective, one can transfer to $R\otimes H$ the Hopf algebra structure of $A$ in
a unique way such that $\theta$ becomes a Hopf algebra map. One has $(R,\mu_{R},\rho_{R})\in{}^{H}_{H}\mathcal{YD}$ with
\[
\mu_{R}:H\otimes R\to R,\ h\otimes r\mapsto\gamma(h_{1})r\gamma(S(h_{2}))\ (\textit{adjoint action}),\quad \rho_{R}:R\to H\otimes R,\ r\mapsto \pi(r_{1})\otimes r_{2},
\]
and also $(R,m_{R},u_{R},\Delta_{R},\varepsilon_{R},S_{R})\in\mathrm{Hopf}(^{H}_{H}\mathcal{YD})$ with $m_{R}=m_{A}|_{R\otimes R}$ and $1_{R}=1_{A}$, while
\[
\Delta_{R}:R\to R\otimes R,\ r\mapsto r_{1}\gamma(S(\pi(r_{2})))\otimes r_{3},\quad \varepsilon_{R}=\varepsilon_{A}|_{R}, \quad S_{R}:R\to R,\ r\mapsto\gamma\pi(r_{1})S(r_{2}).
\]
Moreover $\theta$ is an isomorphism of Hopf algebras, where $R\otimes H$ has the smash product and smash coproduct structure and it is denoted by $R\#H$ (see \cite{Ma2}).

Let us give explicitly the bosonization in the case of $E(n)$. By setting $G:=\{1, g \}$, we have a Hopf algebra with a projection $(E(n),\Bbbk G,\gamma,\pi)$ given by the Hopf algebra projection $\pi:E(n)\to\Bbbk G$, $g^{j}x_{P}\mapsto\delta_{P,\emptyset}g^{j}$ and the canonical inclusion $\gamma:\Bbbk G \to E(n)$. Then, $E(n)\cong E(n)^{\mathrm{co}\Bbbk G}\#\Bbbk G$, where $E(n)^{\mathrm{co}\Bbbk G}:=\{x\in E(n)\ |\ x_{1}\otimes\pi(x_{2})=x\otimes1\}$.
Given $g^{j}x_{P}\in E(n)$, we compute
\[
\begin{split}
(\mathrm{Id}\otimes\pi)\Delta(g^{j}x_{P})&=
\sum_{F\subseteq P}{(-1)^{S(F,P)}g^{|F|+j}x_{P\setminus F}\otimes\pi(g^{j}x_{F})}=(-1)^{S(\emptyset,P)}g^{j}x_{P}\otimes g^{j}=g^{j}x_{P}\otimes g^{j}
\end{split}
\]
and then $(\mathrm{Id}\otimes\pi)\Delta(g^{j}x_{P})=g^{j}x_{P}\otimes1$ if and only if $g^{j}x_{P}\otimes g^{j}=g^{j}x_{P}\otimes1$ if and only if $j=0$. Thus the vector space $R= E(n)^{\mathrm{co}\Bbbk G}$ of coinvariant elements is generated by the $x_{i}$'s. Moreover, it has the same algebra structure of $E(n)$ and then we can write it as $R=\Bbbk\langle x_{P}\ |\ P\subseteq\{1,...,n\}\rangle$. Note that $R$ is clearly not a coalgebra with the structure induced by $E(n)$ since $\Delta(x_{i})=x_{i}\otimes1+g\otimes x_{i}$ for all $i=1,...,n$. Indeed, $R$ has $\varepsilon_{R}=\varepsilon_{E(n)}|_{R}$ but a different coproduct and a different antipode. We make these structures explicit. First notice that
\[
\begin{split}
(\mathrm{Id}\otimes\Delta)\Delta(x_{P})&=(\mathrm{Id}\otimes\Delta)\big(\sum_{F\subseteq P}{(-1)^{S(F,P)}g^{|F|}x_{P\setminus F}\otimes x_{F}}\big)\\&=\sum_{F\subseteq P}{(-1)^{S(F,P)}\sum_{F'\subseteq F}{(-1)^{S(F',F)}g^{|F|}x_{P\setminus F}\otimes g^{|F'|}x_{F\setminus F'}\otimes x_{F'}}}.
\end{split}
\]
So the comultiplication is given by
\[
\begin{split}
    \Delta_{R}(x_{P})&=\sum_{F'\subseteq F\subseteq P}{(-1)^{S(F,P)}(-1)^{S(F',F)}(g^{|F|}x_{P\setminus F})\gamma(S(\pi(g^{|F'|}x_{F\setminus F'})))\otimes x_{F'}}\\&=\sum_{F\subseteq P}{(-1)^{S(F,P)}(g^{|F|}x_{P\setminus F})S(g^{|F|})\otimes x_{F}}=\sum_{F\subseteq P}{(-1)^{S(F,P)}g^{|F|}x_{P\setminus F}g^{|F|}\otimes x_{F}}\\&=\sum_{F\subseteq P}{(-1)^{S(F,P)}(-1)^{|F||P\setminus F|}g^{2|F|}x_{P\setminus F}\otimes x_{F}}=\sum_{F\subseteq P}{(-1)^{(S(F,P)+|F||P\setminus F|)}x_{P\setminus F}\otimes x_{F}}
\end{split}
\]
and the antipode by
\[
\begin{split}
S_{R}(x_{P})&=\sum_{F\subseteq P}{(-1)^{S(F,P)}\gamma\pi(g^{|F|}x_{P\setminus F})S(x_{F})}=g^{|P|}S(x_{P})=(-1)^{|P|}g^{|P|}g^{|P|}x_{P}=(-1)^{|P|}x_{P}.
\end{split}
\]
We know that $R=\Bbbk\langle x_{P}\ |\ P\subseteq\{1,...,n\}\rangle\in\mathrm{Hopf}(^{\Bbbk G}_{\Bbbk G}\mathcal{YD})$ and $E(n)\cong R\#\Bbbk G$, where $G:=\{1,g\}$. 

Now we can use this decomposition to determine a basis of the vector space $\mathrm{H}^{2}(E(n),\Bbbk)$ for the cohomology of coalgebras (as in Section \ref{sub:Hoch}).
This will lead to a description of $\mathrm{Z}^{2}(E(n),\Bbbk)$ that will help us in finding the infinitesimal $\Rr$-matrices of $E(n)$.

\subsubsection{2-cocycles for $E(n)$} Notice that the $R=\Bbbk\langle x_{P}\ |\ P\subseteq\{1,\ldots,n\}\rangle$ is the exterior algebra on an $n$-dimensional vector space, and it is a special case of the algebra $S$ given in \cite[Section 4]{MPSW}. 
Let us recall its definition. Given a positive integer $\theta$ and an integer $N_i>1$, for each $1\leq i\leq \theta$, let $q_{ij}\in \Bbbk\setminus\{0\}$, and $S$ be the algebra generated by $x_1,\ldots, x_\theta$ subject to the
following relations:
\[x_ix_j = q_{ij}x_jx_i\, \quad\text{for all }\, i < j \quad\text{and}\quad x^{N_i}_i = 0 \quad\text{for all}\ i,\] 
where $q_{ji} = q^{-1}_{ij}$ for $i < j$. Thus, $R$ coincides with $S$ when $\theta=n$, $N_i=2$ for all $1\leq i\leq n$ and $q_{ij}=-1$ for $1\leq i<j\leq n$.

Given a $\Bbbk$-algebra $A$, we denote by $\mathrm{H}^{m}_{\mathrm{Hoch}}(A,\Bbbk)$ the $m$-th Hochschild cohomology group of the Hochschild cohomology of $A$ with coefficients in the trivial $A$-bimodule $\Bbbk$ (see e.g. \cite[Definition 1.1.13]{Wi}).
It is known that 
\begin{equation}\label{isoH2}
\mathrm{H}^{m}_{\mathrm{Hoch}}(S\#\Bbbk G, \Bbbk)\cong \mathrm{H}^{m}_{\mathrm{Hoch}}(S,\Bbbk)^{\Bbbk G},
\end{equation}
for all $m\geq1$, see e.g. \cite[Corollary 3.4]{St}, \cite[Corollary 9.6.6]{Wi}, where the $\Bbbk G$-action on $\mathrm{H}^{*}_{\mathrm{Hoch}}(S,\Bbbk)$ is explicitly given on generators $\xi_{i},\eta_{i}$ of the cohomology ring in the following way (see \cite[p. 393]{MPSW}): 
\[
g\cdot\xi_i:=f_{i}(g)^{-N_i}\xi_{i}, \qquad g\cdot\eta_{i}=f_{i}(g)^{-1}\eta_{i},
\]
with $f_{i}:\Bbbk G\to \Bbbk$ the character on $G$ for which $g\cdot x_{i}=f_{i}(g)x_{i}$, for each $1\leq i\leq\theta$. As it is observed in \cite[p. 392]{MPSW}, the $\eta_i$'s are identified with elements in $\mathrm{H}^1_{\mathrm{Hoch}}(S,\Bbbk)$, while the $\xi_i$'s with elements in $\mathrm{H}^2_{\mathrm{Hoch}}(S,\Bbbk)$ for all $i=1,\ldots,\theta$. Moreover,  $\mathrm{H}^m_{\mathrm{Hoch}}(S,\Bbbk)$ has dimension ${m+\theta -1 \choose \theta-1}$ (see \cite[p. 393]{MPSW}), so, in particular, $\mathrm{H}^{2}_{\mathrm{Hoch}}(S,\Bbbk)$ has dimension $\frac{\theta(\theta+1)}{2}$.  


Since
\[
\mathrm{H}^{m}_{\mathrm{Hoch}}(E(n),\Bbbk)\cong\mathrm{H}^{m}_{\mathrm{Hoch}}(R\#\Bbbk G, \Bbbk),
\]
we can apply \eqref{isoH2} with $S=R$, obtaining
\begin{equation}\label{eq:iso-En}
\mathrm{H}^{m}_{\mathrm{Hoch}}(E(n),\Bbbk)\cong\mathrm{H}^{m}_{\mathrm{Hoch}}(R\#\Bbbk G, \Bbbk)\cong \mathrm{H}^{m}_{\mathrm{Hoch}}(R,\Bbbk)^{\Bbbk G},
\end{equation}
for all $m\geq1$. 

Let us consider $\mathrm{H}^{2}_{\mathrm{Hoch}}(R,\Bbbk)^{\Bbbk G}$. 
In this case $g\cdot x_{i}=gx_ig=-x_{i}$,
so that $f_i(g)=-1$ for all $i=1,\ldots,n$. 
The action on $\mathrm{H}^{2}_{\mathrm{Hoch}}(R,\Bbbk)$ is explicitly given by: 
\[
g\cdot\xi_i=f_{i}(g)^{-2}\xi_{i}=\xi_i=\varepsilon(g)\xi_i,
\]
so that every $\xi_i$ is invariant and $\mathrm{H}^{2}_{\mathrm{Hoch}}(R,\Bbbk)=\mathrm{H}^{2}_{\mathrm{Hoch}}(R,\Bbbk)^{\Bbbk G}$. Thus, since 
$\mathrm{H}^{2}_{\mathrm{Hoch}}(R,\Bbbk)$ has dimension $\frac{n(n+1)}{2}$, we deduce by \eqref{eq:iso-En} that $\mathrm{H}^{2}_{\mathrm{Hoch}}(E(n),\Bbbk)$ has dimension $\frac{n(n+1)}{2}$. We observe that, since $E(n)$ is a finite-dimensional self-dual Hopf algebra, we obtain
\[
\mathrm{H}^{m}(E(n),\Bbbk)\cong\mathrm{H}^{m}_{\mathrm{Hoch}}(E(n)^{*},\Bbbk)^{*}\cong\mathrm{H}^{m}_{\mathrm{Hoch}}(E(n),\Bbbk)^{*}
\]
and then also $\mathrm{H}^{2}(E(n),\Bbbk)$ has dimension $\frac{n(n+1)}{2}$.

We now explicitly give a basis of $\mathrm{H}^{2}(E(n),\Bbbk)$. For all $i,l=1,\ldots, n$, let us denote by $\overline{gx_{i}\otimes x_{l}}$ the image of $gx_{i}\otimes x_{l} \in E(n) \otimes E(n)$ in the quotient $\mathrm{H}^2(E(n),\Bbbk)$.
\begin{lemma}\label{lem:cocyclenotcob}
     The elements $\overline{gx_{i}\otimes x_{l}}$, with $i=1, \ldots, n$ and $l\geq i$ are linearly independent in $\mathrm{H}^{2}(E(n),\Bbbk)$, for all $n\geq1$. In particular, for all $i,l=1,\ldots,n$, the element $gx_{i}\otimes x_{l}$ is a 2-cocycle but not a 2-coboundary.
\end{lemma}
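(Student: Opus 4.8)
The plan is to realize the claimed classes as images of a concrete $\tfrac{n(n+1)}{2}$-dimensional subspace and to exploit the bigrading of $E(n)\otimes E(n)$ by $x$-degree in order to control the coboundaries. First I would verify directly that each $gx_i\otimes x_l$ is a Hochschild $2$-cocycle. Using $\Delta(gx_i)=gx_i\otimes g+1\otimes gx_i$ and $\Delta(x_l)=x_l\otimes 1+g\otimes x_l$, a short computation shows that both sides of \eqref{eq:Hoch2cocy} equal $gx_i\otimes x_l\otimes 1+gx_i\otimes g\otimes x_l+1\otimes gx_i\otimes x_l$, so $gx_i\otimes x_l\in\mathrm{Z}^2(E(n),\Bbbk)$ for all $i,l=1,\dots,n$. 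Next I would record the relevant coboundaries: for $i<l$ formula \eqref{deltagx} gives $\Delta(x_ix_l)=x_ix_l\otimes 1+gx_l\otimes x_i-gx_i\otimes x_l+1\otimes x_ix_l$, whence $b^1(x_ix_l)=gx_i\otimes x_l-gx_l\otimes x_i$. Thus the antisymmetric combinations $gx_i\otimes x_l-gx_l\otimes x_i$ are genuine Hochschild $2$-coboundaries.

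The key step is to bound, by means of a projection, which coboundaries can land in the subspace $V:=\langle\, gx_i\otimes x_l \mid 1\le i,l\le n\,\rangle$. Let $\pi_V\colon E(n)\otimes E(n)\to V$ be the projection onto the basis vectors of the form $gx_i\otimes x_l$ (first tensorand $g$ times a single $x$, second tensorand a single $x$ without $g$). Since $\pi_V(1\otimes w)=0=\pi_V(w\otimes 1)$ and $\Delta$ preserves total $x$-degree, one has $\pi_V(b^1(w))=-\pi_V(\Delta(w))$, and this depends only on the $x$-degree-$2$, $g$-free part of $w$: indeed $\pi_V\Delta(gx_ix_l)=0$ because the bidegree-$(1,1)$ part of $\Delta(gx_ix_l)$ has the $g$ on the second tensorand, whereas $\pi_V\Delta(x_ix_l)=gx_l\otimes x_i-gx_i\otimes x_l$. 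Hence $\pi_V(b^1(w))\in\langle\, gx_i\otimes x_l-gx_l\otimes x_i \mid i<l\,\rangle$ for every $w\in E(n)$. I expect this bigrading bookkeeping to be the delicate point of the argument, since one must rule out that any coboundary arising from $g$-twisted or higher $x$-degree terms contributes a diagonal term $gx_i\otimes x_i$ or an ``upper-triangular'' term $gx_i\otimes x_l$ with $i<l$.

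Finally I would deduce the linear independence. Suppose $\sum_{i\le l}c_{il}\,gx_i\otimes x_l=b^1(w)$ in $\mathrm{B}^2(E(n),\Bbbk)$. Applying $\pi_V$ and comparing coefficients against the basis $\{gx_i\otimes x_l\}$ of $V$: for $i<l$ the coefficient of $gx_l\otimes x_i$ is $0$ on the left (that index pair does not occur in the sum) and is the negative of the coefficient of $gx_i\otimes x_l$ on the right, forcing every off-diagonal $c_{il}$ to vanish; the diagonal coefficients $c_{ii}$ vanish too, since the right-hand side carries no diagonal term. Therefore the classes $\overline{gx_i\otimes x_l}$ with $i\le l$ are linearly independent in $\mathrm{H}^2(E(n),\Bbbk)$; as there are exactly $\tfrac{n(n+1)}{2}$ of them and $\dim\mathrm{H}^2(E(n),\Bbbk)=\tfrac{n(n+1)}{2}$, they in fact form a basis. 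For the ``in particular'' statement, every $gx_i\otimes x_l$ is a cocycle by the first step, and it fails to be a coboundary: when $i\le l$ its class is a nonzero basis vector, while when $i>l$ the identity $b^1(x_lx_i)=gx_l\otimes x_i-gx_i\otimes x_l$ gives $\overline{gx_i\otimes x_l}=\overline{gx_l\otimes x_i}$, which is again a nonzero basis vector since $l<i$.
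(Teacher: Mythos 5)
Your proof is correct, and it reaches the same crucial fact as the paper---that the only Hochschild $2$-coboundaries meeting the span $V=\langle gx_i\otimes x_l\mid 1\le i,l\le n\rangle$ are the antisymmetric combinations coming from $b^1(x_ix_l)=gx_i\otimes x_l-gx_l\otimes x_i$---but by a different device. The paper writes a general potential preimage $\alpha=\sum_{P,j}\alpha^j_Pg^jx_P$ and eliminates its coefficients family by family: first $\alpha^j_\emptyset$, then all $\alpha^1_P$ (via the $gx_P\otimes g$ terms), then $\alpha^0_P$ for $|P|$ odd (via the $1\otimes x_P$ terms), then $\alpha^0_P$ for $|P|$ even not of the form $\{i,l\}$, until only $\alpha=\sum_{i<l}\alpha_{il}x_ix_l$ survives; it then concludes, as you do, by matching the $gx_l\otimes x_i$ terms with $l>i$. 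You replace this term-by-term elimination with one structural observation: $\pi_V(b^1(w))=-\pi_V(\Delta(w))$, and since $\Delta$ preserves total $x$-degree and puts the $g$ on the wrong tensorand for the components $gx_ix_l$, only the $g$-free $x$-degree-$2$ part of $w$ can contribute, so $\pi_V(b^1(w))$ always lies in the antisymmetric span. Your route is shorter and makes the mechanism transparent, at the cost of not recording the extra information the paper's computation yields (namely that the only preimage $\alpha$ with $b^1(\alpha)\in V$ is $\alpha=0$). Two minor remarks: your appeal to $\dim\mathrm{H}^2(E(n),\Bbbk)=\tfrac{n(n+1)}{2}$ to upgrade independence to a basis is legitimate (that dimension count is established in the paper before the lemma) but is not needed for the statement itself; and your handling of the case $i>l$ via $\overline{gx_i\otimes x_l}=\overline{gx_l\otimes x_i}$ coincides exactly with the paper's closing argument.
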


\begin{proof}
Fix an arbitrary $n\geq1$. Let us suppose that $\Gamma:=\sum_{l \geq i=1}^n \gamma_{il}gx_{i}\otimes x_{l}\in\mathrm{B}^{2}(E(n))$, so that there exists $\alpha=\sum_{P,j=0,1}{\alpha^{j}_{P}g^{j}x_{P}}\in E(n)$ such that $b^{1}(\alpha)=1\otimes\alpha-\Delta(\alpha)+\alpha\otimes1=\Gamma$. We compute 
    \[
    \begin{split}
    b^{1}(\sum_{P,j=0,1}{\alpha^{j}_{P}g^{j}x_{P}})&=\sum_{P,j=0,1}{\alpha^{j}_{P}1\otimes g^{j}x_{P}}-\sum_{P,j=0,1}{\alpha^{j}_{P}\Delta(g^{j}x_{P})}+\sum_{P,j=0,1}{\alpha^{j}_{P}g^{j}x_{P}\otimes1}\\&=\sum_{P,j=0,1}{\alpha^{j}_{P}1\otimes g^{j}x_{P}}-\sum_{P,j=0,1}{\alpha^{j}_{P}}\sum_{F\subseteq P}{(-1)^{S(F,P)}g^{|F|+j}x_{P\setminus F}\otimes g^{j}x_{F}}\\&+\sum_{P,j=0,1}{\alpha^{j}_{P}g^{j}x_{P}\otimes1}.
    \end{split}
    \]
    When we force $b^{1}(\alpha)=\Gamma$, we immediately get by linear independence that $\alpha^{j}_{\emptyset}=0$. Notice that, for every $P \subseteq \lbrace 1, \ldots, n \rbrace$, the only term in $b^1(\alpha)$ proportional to $gx_P \otimes g$ is $-\alpha^1_P(gx_P \otimes g)$, while there is no term of this form appearing in $\Gamma$. This yields $\alpha^1_P=0$ for every $P$. Similarly, for every $P \subseteq \lbrace 1, \ldots, n \rbrace$ with $|P|$ odd, the only term in $b^1(\alpha)$ proportional to $1 \otimes x_P$ is $\alpha^0_P(1 \otimes x_P)$, while there is no term of this form appearing in $\Gamma$. This yields $\alpha^0_P=0$ for every $P$ with odd cardinality. As a consequence we have that
    \[
    b^1(\alpha)=\Gamma \iff -\sum_{|P|\textrm{ even}}{\alpha^0_{P}}\sum_{\emptyset \neq F\subsetneq P}{(-1)^{S(F,P)}g^{|F|}x_{P\setminus F}\otimes x_{F}}=\sum_{l \geq i=1}^n \gamma_{il}gx_{i}\otimes x_{l}.
    \]
    This implies that $\alpha^0_P=0$ for every $P$ which is not of the form $\lbrace i, l \rbrace$ with $i,l \in \lbrace 1, \ldots, n \rbrace$ and $l > i$. It follows that $\alpha=\sum_{i=1}^n\sum_{l>i}\alpha_{il}x_ix_l$ and that
    \[
    b^1(\alpha)=\Gamma \iff-\sum_{i=1}^n\sum_{l>i}\alpha_{il}[-gx_i\otimes x_l +gx_l \otimes x_i]=\sum_{l \geq i=1}^n \gamma_{il}gx_{i}\otimes x_{l}.
    \]
    Notice that for each pair $(i,l)$ with $l>i$ there is only one term on the LHS of the latter equality that is proportional to $x_l \otimes x_i$ and no term on the RHS that respects this property. This forces $\alpha_{il}=0$ for every $i$ and $l>i$, i.e. $\alpha=0$, and subsequently that $\Gamma=0$, that is, $\gamma_{il}=0$ for every $i=1, \ldots, n$ and $l\geq i$, by linear independence. This shows that the elements $\overline{gx_{i}\otimes x_{l}}$ with $i=1, \ldots, n$ and $l\geq i$ are linearly independent in $\mathrm{H}^{2}(E(n),\Bbbk)$ and, in particular, that each corresponding representative is not a $2$-coboundary. To conclude one can easily verify that $gx_{i}\otimes x_{l}\in \mathrm{Z}^{2}(E(n),\Bbbk)$ and $gx_{i}\otimes x_{l}-gx_{l}\otimes x_{i}=b^{1}(x_{i}x_{l})$, for all $i,l\in\{1,\ldots, n\}$. Since for every $i=1, \ldots, n$ and $l\geq i$ the elements $gx_i \otimes x_l$ are not $2$-coboundaries, we get that also the elements $gx_l \otimes x_i$ are not.
\end{proof}

\begin{remark}
Notice that, e.g. by \cite[Lemma 2.13]{Da}, the second cohomology group $\mathrm{H}^{2}(E(1), \Bbbk)$ of Sweedler’s Hopf algebra $E(1)$ is one-dimensional and it is generated by $\overline{gx_{1}\otimes x_{1}}$ (in loc. cit. the generator is $x_{1}\otimes gx_{1}$ because the given comultiplication is the opposite of the one we are considering on $x_{1}$). In particular, $gx_{1}\otimes x_{1}$ is not a 2-coboundary.
\end{remark}

Let us observe that, since the cardinality of the set $\mathfrak{B}= \lbrace \overline{gx_{i}\otimes x_{l}}\ |\ l\geq i \rbrace$ is $\frac{n(n+1)}{2}$, then $\mathfrak{B}$ is a basis of $\mathrm{H}^{2}(E(n),\Bbbk)$. If we define $\mathrm{I}(E(n),\Bbbk):=\langle gx_{i}\otimes x_{l}\ |\ l\geq i \rangle$, we obtain the following result:

\begin{theorem}\label{prop:2cocycle}
    For any $n\geq1$, we have
\[
\mathrm{Z}^{2}(E(n),\Bbbk)=\mathrm{B}^{2}(E(n),\Bbbk)\oplus \mathrm{I}(E(n),\Bbbk).
\]
\end{theorem}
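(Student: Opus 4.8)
The plan is to assemble the statement from the two facts already established upstream: that the generators $gx_i\otimes x_l$ of $\mathrm{I}(E(n),\Bbbk)$ are genuine Hochschild $2$-cocycles whose classes form a basis of $\mathrm{H}^2(E(n),\Bbbk)$, and that no nonzero element of $\mathrm{I}(E(n),\Bbbk)$ is a coboundary. First I would record the easy inclusion $\mathrm{B}^2(E(n),\Bbbk)+\mathrm{I}(E(n),\Bbbk)\subseteq \mathrm{Z}^2(E(n),\Bbbk)$: coboundaries are always cocycles, and by Lemma \ref{lem:cocyclenotcob} each generator $gx_i\otimes x_l$ lies in $\mathrm{Z}^2(E(n),\Bbbk)$, so the whole span does too.

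Next I would verify that the sum $\mathrm{B}^2+\mathrm{I}$ is direct, which is exactly the content of the proof of Lemma \ref{lem:cocyclenotcob}: if $\Gamma=\sum_{l\geq i}\gamma_{il}\,gx_i\otimes x_l$ lies in $\mathrm{B}^2(E(n),\Bbbk)$, then all $\gamma_{il}=0$, hence $\Gamma=0$. Therefore $\mathrm{B}^2(E(n),\Bbbk)\cap\mathrm{I}(E(n),\Bbbk)=0$.

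Finally I would show the sum exhausts $\mathrm{Z}^2$ by a rank argument. Consider the canonical projection $\pi:\mathrm{Z}^2(E(n),\Bbbk)\to \mathrm{H}^2(E(n),\Bbbk)$ with $\ker\pi=\mathrm{B}^2(E(n),\Bbbk)$. The $\tfrac{n(n+1)}{2}$ elements $gx_i\otimes x_l$ with $l\geq i$ are linearly independent in $E(n)\otimes E(n)$, being tensor products of basis vectors, so they form a basis of $\mathrm{I}(E(n),\Bbbk)$; under $\pi$ they map precisely to the basis $\mathfrak{B}=\{\overline{gx_i\otimes x_l}\mid l\geq i\}$ of $\mathrm{H}^2(E(n),\Bbbk)$. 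Hence $\pi$ restricts to a linear isomorphism $\mathrm{I}(E(n),\Bbbk)\xrightarrow{\sim}\mathrm{H}^2(E(n),\Bbbk)$. Given any $\zeta\in\mathrm{Z}^2(E(n),\Bbbk)$, one picks the unique $\iota\in\mathrm{I}(E(n),\Bbbk)$ with $\pi(\iota)=\pi(\zeta)$; then $\zeta-\iota\in\ker\pi=\mathrm{B}^2(E(n),\Bbbk)$, so $\zeta=(\zeta-\iota)+\iota\in \mathrm{B}^2(E(n),\Bbbk)+\mathrm{I}(E(n),\Bbbk)$. Combined with directness, this gives $\mathrm{Z}^2(E(n),\Bbbk)=\mathrm{B}^2(E(n),\Bbbk)\oplus\mathrm{I}(E(n),\Bbbk)$.

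I expect no genuine obstacle in the theorem itself: all the substantive work sits upstream, in the combinatorial computation of Lemma \ref{lem:cocyclenotcob} (that the $gx_i\otimes x_l$ are cocycles but not coboundaries) and in the dimension count $\dim\mathrm{H}^2(E(n),\Bbbk)=\tfrac{n(n+1)}{2}$ obtained via the bosonization isomorphism \eqref{eq:iso-En}. The only point to be careful about is matching the dimension of $\mathrm{I}(E(n),\Bbbk)$ with that of $\mathrm{H}^2(E(n),\Bbbk)$, so that the injective map $\pi|_{\mathrm{I}}$ is forced to be an isomorphism onto the cohomology and the decomposition becomes automatic.
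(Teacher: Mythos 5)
Your proposal is correct and follows essentially the same route as the paper: the theorem there is stated as an immediate consequence of Lemma \ref{lem:cocyclenotcob} (the $gx_i\otimes x_l$ are cocycles, not coboundaries, and independent in cohomology) together with the dimension count $\dim\mathrm{H}^2(E(n),\Bbbk)=\tfrac{n(n+1)}{2}$ from the bosonization isomorphism \eqref{eq:iso-En}, which is exactly what you assemble. Your write-up merely makes explicit the standard linear-algebra step (the projection $\pi$ restricting to an isomorphism $\mathrm{I}(E(n),\Bbbk)\to\mathrm{H}^2(E(n),\Bbbk)$) that the paper leaves implicit.
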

\begin{remark}
Let us observe that $\mathrm{H}^{2}(E(n),\Bbbk)$, for all $n\geq1$, is substantially known in literature in case the base field $\Bbbk$ is $\mathbb{C}$. Indeed, recall that, given a finite-dimensional cocommutative Hopf superalgebra $\mathcal{A}$ over $\mathbb{C}$, we have $\mathcal{A}=\mathbb{C}[G]\ltimes\wedge W$, where $W$ is the (purely odd) space of primitive elements and $G$ is the group of group-like elements acting on $W$, hence on $\wedge W$, by conjugation, see \cite[Theorem 3.3]{Ko}. Moreover, in \cite[Lemma 3.2]{EG}, it is computed the Hochschild cohomology of the group algebra $\mathbb{C}[G\ltimes W]$ of the supergroup $G\ltimes W$, i.e., the Hopf superalgebra $\mathbb{C}[G]\ltimes\wedge W$. In \cite[p. 659 and Lemma 8.1]{BC} it is stated that $E(n)$ can be realized as a commutative Hopf superalgebra $(\mathbb{C}[G]\ltimes\wedge W)^{*}$ and that $S^{2}(W)^{G}\cong H^{2}_{L}(E(n))$, the \textit{lazy 2-cohomology group} in the sense of \cite[Definition 1.7]{BC}. Moreover, at the end of \cite[p. 637]{BC}, it is said that $H^{2}_{L}(E(n))\cong S^{2}(\mathbb{C}^{n})$, the group on $n\times n$ symmetric matrices with entries in $\mathbb{C}$. Hence, we obtain
\[
\mathrm{H}^{2}_{\mathrm{Hoch}}(E(n),\mathbb{C})\cong \mathrm{H}^{2}_{\mathrm{Hoch}}(E(n)^{*},\mathbb{C})\cong \mathrm{H}^{2}_{\mathrm{Hoch}}(\mathbb{C}[G\ltimes W],\mathbb{C})\overset{(!)}{=}S^{2}(W^{*})^{G}\cong S^{2}(W)^{G}\cong H^{2}_{L}(E(n))\cong S^{2}(\mathbb{C}^{n}),
\]
where $(!)$ follows using \cite[Lemma 3.2]{EG}. Thus, $\mathrm{H}^{2}_{\mathrm{Hoch}}(E(n),\mathbb{C})$ is isomorphic to the space of $n\times n$ symmetric matrices with entries in $\mathbb{C}$, so it has dimension $\frac{n(n+1)}{2}$. Therefore, also $\mathrm{H}^{2}(E(n),\mathbb{C})$ has the same dimension.
\end{remark}

\subsubsection{Infinitesimal $\Rr$-matrices for $E(n)$}

Since we have the classification of the $2$-cocycles of $E(n)$, we can impose further necessary conditions on $\chi$ in order to obtain the classification of infinitesimal $\Rr$-matrices of $E(n)$. 
By Theorem \ref{prop:2cocycle} an arbitrary element $\chi\in \mathrm{Z}^{2}(E(n),\Bbbk)$ is given by
$$\chi=b^1(\xi)+\sum_{1\leq i\leq l\leq n}\alpha_{i,l}gx_i\otimes x_l,$$
where $\xi=\sum_{P,j=0,1}
\xi^j_P g^jx_P\in E(n)$. 
We compute $b^1(\xi)$:
\begin{equation}\label{eq:b1}
\begin{split}
b^{1}(\xi)&=
\sum_{P,j=0,1}\xi^j_P1\otimes g^jx_P+\sum_{P,j=0,1}\xi^j_Pg^jx_P\otimes 1-\sum_{P,j=0,1}\xi^j_P\sum_{F\subseteq P}(-1)^{S(F,P)} g^{|F|+j}x_{P\setminus F}\otimes g^jx_F\\&=
\sum_{|P|\ \text{even}}{\xi^{1}_{P}1\otimes gx_{P}}+\sum_{|P|\ \text{odd}}{\xi^{0}_{P}1\otimes x_{P}}+\sum_{P}{\xi^{1}_{P}gx_{P}\otimes 1}\\&-\sum_{\emptyset\not=P,j=0,1}\xi^j_P\sum_{\emptyset\not=F\subsetneq P}{(-1)^{S(F,P)}g^{|F|+j}x_{P\setminus F}\otimes g^{j}x_{F}}\\&-\sum_{|P|\ \text{even}}{\xi^{1}_{P}g\otimes gx_{P}}-\sum_{|P|\ \text{odd}}{\xi^{0}_{P}g\otimes x_{P}}-\sum_{P}{\xi^{1}_{P}gx_{P}\otimes g}.
\end{split}
\end{equation}

By \cite[Proposition 2.24]{ABSW}, by considering the Hopf algebra projection $\pi:E(n) \rightarrow \Bbbk C_{2}$, $g^j x_P\mapsto\delta_{P,\emptyset} g^j$ and applying $\pi \otimes \pi$ on $b^{1}(\xi)$ as in \eqref{eq:b1}, since $P(\Bbbk C_2)=0$,  we deduce that $\xi^1_\emptyset=0$. Hence we can suppose $P\not=\emptyset$ in every term appearing in \eqref{eq:b1}. 

Let us set $\chi':=\sum_{1\leq i\leq l\leq n}\alpha_{i,l}gx_i\otimes x_l$ so that $\chi=b^{1}(\xi)+\chi'$.

\begin{remark}\label{rmk:chi'}
We observe that, since $\Delta(g)\chi'=\chi'\Delta(g)$, condition \eqref{cqtr1} on $g$ becomes equivalent to $(g\otimes g)b^{1}(\xi)=b^{1}(\xi)(g\otimes g)$. Analogously, since $\Delta(x_{i})\chi'=\chi'\Delta(x_{i})$, condition \eqref{cqtr1} on $x_{i}$ becomes equivalent to $(x_{i}\otimes 1+g\otimes x_{i})b^{1}(\xi)=b^{1}(\xi)(x_{i}\otimes 1+g\otimes x_{i})$.
\end{remark}
We observe the following facts.

\begin{lemma}\label{Centg}
Let $a\in E(n) \otimes E(n)$. Then, 
$a\Delta(g)=\Delta(g)a$ if, and only if,
\begin{equation}\label{eq:a}a=\sum_{\underset{|P|+|Q| \textrm{ even}}{j,k=0,1}} \alpha_{j,k}^{P,Q} g^jx_P \otimes g^kx_Q.
\end{equation}
\end{lemma}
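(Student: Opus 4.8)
The plan is to recognize the centralizer condition as invariance under conjugation by the \emph{invertible} element $\Delta(g)=g\otimes g$, and then to diagonalize this conjugation on the standard basis $\{g^{j}x_{P}\otimes g^{k}x_{Q}\}$ of $E(n)\otimes E(n)$. Since everything reduces to an eigenspace decomposition, the argument is short and the only subtle points are the sign bookkeeping and the use of $\mathrm{char}(\Bbbk)\neq 2$.

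First I would observe that $\Delta(g)=g\otimes g$ is invertible in $E(n)\otimes E(n)$ with $(g\otimes g)^{-1}=g\otimes g$, because $g^{2}=1$. Consequently the condition $a\Delta(g)=\Delta(g)a$ is equivalent to the conjugation invariance $(g\otimes g)\,a\,(g\otimes g)=a$. The benefit of this reformulation is that conjugation by $g\otimes g$ is an algebra automorphism that acts diagonally on the chosen basis.

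Next I would compute this action explicitly. Using $gx_{i}=-x_{i}g$ repeatedly yields $gx_{P}=(-1)^{|P|}x_{P}g$ for every $P\subseteq\{1,\dots,n\}$, and since $g^{j}$ commutes with $g$ we get $g(g^{j}x_{P})g^{-1}=(-1)^{|P|}g^{j}x_{P}$. Therefore
\[
(g\otimes g)(g^{j}x_{P}\otimes g^{k}x_{Q})(g\otimes g)=(-1)^{|P|+|Q|}\,g^{j}x_{P}\otimes g^{k}x_{Q},
\]
so each basis vector is an eigenvector for the conjugation, with eigenvalue $(-1)^{|P|+|Q|}$.

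Finally, writing an arbitrary $a=\sum_{j,k,P,Q}\alpha_{j,k}^{P,Q}\,g^{j}x_{P}\otimes g^{k}x_{Q}$ and imposing invariance, linear independence of the basis forces $\alpha_{j,k}^{P,Q}\bigl((-1)^{|P|+|Q|}-1\bigr)=0$ for every index. Since $\mathrm{char}(\Bbbk)\neq 2$, the scalar $(-1)^{|P|+|Q|}-1$ is nonzero precisely when $|P|+|Q|$ is odd, so all coefficients with $|P|+|Q|$ odd must vanish; this leaves exactly the expression \eqref{eq:a}. Conversely, every $a$ of that form lies in the $+1$-eigenspace and hence commutes with $\Delta(g)$, which closes the equivalence. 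There is no genuine obstacle here: the whole content is the sign identity $gx_{P}=(-1)^{|P|}x_{P}g$ together with the hypothesis $\mathrm{char}(\Bbbk)\neq 2$ used to separate the two eigenvalues.
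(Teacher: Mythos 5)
Your proof is correct and is essentially the paper's argument: the paper likewise expands $a$ in the basis $\{g^jx_P\otimes g^kx_Q\}$, uses $gx_P=(-1)^{|P|}x_Pg$ to compare $a\Delta(g)$ with $\Delta(g)a$, and invokes linear independence together with $\mathrm{char}(\Bbbk)\neq 2$ to kill the coefficients with $|P|+|Q|$ odd. Your conjugation/eigenspace phrasing is an equivalent repackaging of the same computation.
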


\begin{proof}
Write $a=\sum_{\underset{P,Q}{j,k=0}}^1 \alpha_{j,k}^{P,Q} g^jx_P \otimes g^kx_Q\in E(n)\otimes E(n)$. The condition $a\Delta(g)=\Delta(g)a$ reads
\[\sum_{\underset{P,Q}{j,k=0}}^1 (-1)^{|P|+|Q|}\alpha_{j,k}^{P,Q} g^{j+1}x_P \otimes g^{k+1}x_Q=\sum_{\underset{P,Q}{j,k=0}}^1 \alpha_{j,k}^{P,Q} g^{j+1}x_P \otimes g^{k+1}x_Q,\]
and, since $\text{char}(\Bbbk)\not=2$, the latter is satisfied if, and only if, $\alpha_{j,k}^{P,Q}=0$ for any $P,Q$ such that $|P|+|Q|$ is odd.
\end{proof}

\begin{lemma}\label{centx}
Fix a generator  $x_i\in E(n)$, and suppose that $$a=\sum_{\underset{P,Q}{j,k=0}}^1 \alpha_{j,k}^{P,Q} g^jx_P \otimes g^kx_Q\in E(n)\otimes E(n)$$
is an element such that $a \Delta(x_i)=\Delta(x_i)a$. 

For every $P, Q\subseteq \{1,\ldots, n\}$, with $i\notin P\cup Q$ and $|P|, |Q|$ even, we have that $\alpha^{P,Q}_{0,1}=0$.
\end{lemma}

\begin{proof}
The condition $\Delta(x_i)a=a\Delta(x_i)$ reads
\[\sum_{\underset{P,Q}{j,k=0}}^1 \alpha_{j,k}^{P ,Q} [(-1)^{j+|P|}-1](g^jx_Px_i \otimes g^kx_Q)+\sum_{\underset{P,Q}{j,k=0}}^1 \alpha_{j,k}^{P,Q}[(-1)^{k+|Q|}-(-1)^{|P|}](g^{j+1}x_P \otimes g^kx_Qx_i)=0,\]
i.e.
\[\sum_{|P| \textrm{ odd},Q} \big[-\alpha_{0,0}^{P ,Q}(x_Px_i \otimes x_Q)-\alpha_{0,1}^{P ,Q}(x_Px_i \otimes gx_Q)\big]+\sum_{|P| \textrm{ even}, Q}\big[-\alpha_{1,0}^{P ,Q} (gx_Px_i \otimes x_Q)-\alpha_{1,1}^{P ,Q} (gx_Px_i \otimes gx_Q)\big]+\]
\[+\sum_{ |P|+|Q| \textrm{ odd}} (-1)^{|Q|}\big[\alpha_{0,0}^{P,Q}(gx_P \otimes x_Qx_i)+\alpha_{1,0}^{P,Q}(x_P \otimes x_Qx_i)\big]+\]
\[+\sum_{|P|+|Q| \textrm{ even}}(-1)^{|Q|+1}\big[\alpha_{0,1}^{P,Q}(gx_P \otimes gx_Qx_i)+\alpha_{1,1}^{P,Q}(x_P \otimes gx_Qx_i)\big]=0.\]

To better understand what is happening, let us rearrange everything in terms of two properties: the parity of each cardinality $|P|, |Q|$ and the inclusion $i \in P, i \in Q$. Recall that $x_Px_i=0$, whenever $i\in P$.
\[\sum_{\underset{|Q| \textrm{ odd}, Q \not \ni i}{|P| \textrm{ odd}, P \not \ni i}} \big[-\alpha_{0,0}^{P ,Q}(x_Px_i \otimes x_Q)-\alpha_{0,1}^{P ,Q}(x_Px_i \otimes gx_Q)\big]+\sum_{\underset{|Q| \textrm{ odd}, Q \ni i}{|P| \textrm{ odd}, P \not \ni i}}\big[ -\alpha_{0,0}^{P ,Q}(x_Px_i \otimes x_Q)-\alpha_{0,1}^{P ,Q}(x_Px_i \otimes gx_Q)\big]+\]
\[+\sum_{\underset{|Q| \textrm{ even}, Q \not \ni i}{|P| \textrm{ odd}, P \not \ni i}} \big[-\alpha_{0,0}^{P ,Q}(x_Px_i \otimes x_Q)-\alpha_{0,1}^{P ,Q}(x_Px_i \otimes gx_Q)\big]+\sum_{\underset{|Q| \textrm{ even}, Q \ni i}{|P| \textrm{ odd}, P \not \ni i}}\big[ -\alpha_{0,0}^{P ,Q}(x_Px_i \otimes x_Q)-\alpha_{0,1}^{P ,Q}(x_Px_i \otimes gx_Q)\big]+\]
\[+\sum_{\underset{|Q| \textrm{ odd}, Q \not \ni i}{|P| \textrm{ even}, P \not \ni i}}\big[-\alpha_{1,0}^{P ,Q} (gx_Px_i \otimes x_Q)-\alpha_{1,1}^{P ,Q} (gx_Px_i \otimes gx_Q)\big]+\sum_{\underset{|Q| \textrm{ odd}, Q \ni i}{|P| \textrm{ even}, P \not \ni i}}\big[-\alpha_{1,0}^{P ,Q} (gx_Px_i \otimes x_Q)-\alpha_{1,1}^{P ,Q} (gx_Px_i \otimes gx_Q)\big]+\]
\[+\sum_{\underset{|Q| \textrm{ even}, Q \not \ni i}{|P| \textrm{ even}, P \not \ni i}}\big[-\alpha_{1,0}^{P ,Q} (gx_Px_i \otimes x_Q)-\alpha_{1,1}^{P ,Q} (gx_Px_i \otimes gx_Q)\big]+\sum_{\underset{|Q| \textrm{ even}, Q \ni i}{|P| \textrm{ even}, P \not \ni i}}\big[-\alpha_{1,0}^{P ,Q} (gx_Px_i \otimes x_Q)-\alpha_{1,1}^{P ,Q} (gx_Px_i \otimes gx_Q)\big]+\]

\[+\sum_{\underset{|Q| \textrm{ even},Q \not \ni i}{|P| \textrm{\ odd}, P \not \ni i}} \big[\alpha_{0,0}^{P,Q}(gx_P \otimes x_Qx_i)+\alpha_{1,0}^{P,Q}(x_P \otimes x_Qx_i)\big]+\sum_{\underset{|Q| \textrm{ even},Q \not \ni i}{|P| \textrm{\ odd}, P \ni i}} \big[\alpha_{0,0}^{P,Q}(gx_P \otimes x_Qx_i)+\alpha_{1,0}^{P,Q}(x_P \otimes x_Qx_i)\big]+\]
\[+\sum_{\underset{|Q| \textrm{ odd},Q \not \ni i}{|P| \textrm{\ even}, P \not \ni i}} \big[-\alpha_{0,0}^{P,Q}(gx_P \otimes x_Qx_i)-\alpha_{1,0}^{P,Q}(x_P \otimes x_Qx_i)\big]+\sum_{\underset{|Q| \textrm{ odd},Q \not \ni i}{|P| \textrm{\ even}, P \ni i}} \big[-\alpha_{0,0}^{P,Q}(gx_P \otimes x_Qx_i)-\alpha_{1,0}^{P,Q}(x_P \otimes x_Qx_i)\big]+\]

\[+\sum_{\underset{|Q| \textrm{ even}, Q \not \ni i}{|P| \textrm{ even}, P \not \ni i}}\big[-\alpha_{0,1}^{P,Q}(gx_P \otimes gx_Qx_i)-\alpha_{1,1}^{P,Q}(x_P \otimes gx_Qx_i)\big]+\sum_{\underset{|Q| \textrm{ even}, Q \not \ni i}{|P| \textrm{ even}, P \ni i}}\big[-\alpha_{0,1}^{P,Q}(gx_P \otimes gx_Qx_i)-\alpha_{1,1}^{P,Q}(x_P \otimes gx_Qx_i)\big]+\]
\[+\sum_{\underset{|Q| \textrm{ odd}, Q \not \ni i}{|P| \textrm{ odd}, P \not \ni i}}\big[\alpha_{0,1}^{P,Q}(gx_P \otimes gx_Qx_i)+\alpha_{1,1}^{P,Q}(x_P \otimes gx_Qx_i)\big]+\sum_{\underset{|Q| \textrm{ odd}, Q \not \ni i}{|P| \textrm{ odd}, P \ni i}}\big[\alpha_{0,1}^{P,Q}(gx_P \otimes gx_Qx_i)+\alpha_{1,1}^{P,Q}(x_P \otimes gx_Qx_i)\big]=0.\]

Consider two subsets $P, Q\subseteq \{1,\ldots, n\}$, with $i\notin P\cup Q$ and $|P|, |Q|$ even. Looking at the term $gx_P\otimes gx_Qx_i$, by linear independence, we get $\alpha_{0,1}^{P,Q}=0$. 
\end{proof}

\begin{invisible}

\begin{lemma}\label{centx}
Suppose $a \in C_{E(n)\otimes E(n)}(\Delta(x_i))$, i.e. $a \Delta(x_i)=\Delta(x_i)a$.
If we write
\[a=\sum_{\underset{P,Q}{j,k=0}}^1 \alpha_{j,k}^{P,Q} g^jx_P \otimes g^kx_Q,\]
then the following equalities must hold:
\begin{center}
\def\arraystretch{1.5}
\begin{tabular}{| c | c | c |}
\hline
\textcolor{red}{$i \not \in P \cup Q$} & $|P|$ even  & $|P|$ odd \\
\hline 
$|Q|$ even & $\alpha^{P,Q}_{0,1}=\alpha^{P,Q}_{1,0}=\alpha^{P,Q}_{1,1}=0$ & $\alpha^{P,Q}_{0,0}=\alpha^{P,Q}_{0,1}=\alpha^{P,Q}_{1,0}=0$\\ 
&  $\alpha_{1,0}^{P ,Q\cup \lbrace i \rbrace} =(-1)^{s(P \cup Q  \cup \lbrace i \rbrace,i)}\alpha_{0,0}^{P \cup \lbrace i \rbrace,Q}$ & $\alpha_{0,1}^{P ,Q\cup \lbrace i \rbrace}=(-1)^{s(P\cup Q \cup \lbrace i \rbrace,i)+1}\alpha_{1,1}^{P \cup \lbrace i \rbrace ,Q}$\\
\hline
$|Q|$ odd & $\alpha^{P,Q}_{0,0}=\alpha^{P,Q}_{1,0}=\alpha^{P,Q}_{1,1}=0$ & $\alpha^{P,Q}_{0,0}=\alpha^{P,Q}_{0,1}=\alpha^{P,Q}_{1,1}=0$\\
& $\alpha_{1,1}^{P ,Q \cup \lbrace i \rbrace} =(-1)^{s(P \cup Q \cup  \lbrace i \rbrace,i)}\alpha_{0,1}^{P \cup \lbrace i \rbrace,Q}$ & $\alpha_{0,0}^{P ,Q\cup \lbrace i \rbrace}=(-1)^{s(P \cup Q \cup  \lbrace i \rbrace,i)+1}\alpha_{1,0}^{P \cup \lbrace i \rbrace,Q}$\\
\hline
\end{tabular}

\medskip
\begin{tabular}{| c | c | c |}
\hline
\textcolor{red}{$i \not \in P,\ i \in Q$} & $|P|$ even  & $|P|$ odd \\
\hline 
$|Q|$ even & $\alpha^{P,Q}_{1,0}=0$ & $\alpha^{P,Q}_{0,1}=0$\\
\hline
$|Q|$ odd & $\alpha^{P,Q}_{1,1}=0$ & $\alpha^{P,Q}_{0,0}=0$ \\
\hline
\end{tabular}
\hspace{0.5cm}
\begin{tabular}{| c | c | c |}
\hline
\textcolor{red}{$i \in P,\ i  \not \in Q$} & $|P|$ even  & $|P|$ odd \\
\hline 
$|Q|$ even & $\alpha^{P,Q}_{0,1}=0$ & $\alpha^{P,Q}_{1,0}=0$\\
\hline
$|Q|$ odd & $\alpha^{P,Q}_{0,0}=0$ & $\alpha^{P,Q}_{1,1}=0$ \\
\hline
\end{tabular}
\end{center}
where $(-1)^{s(P,i)}$ is the sign obtained when transforming $x_P$ into $x_{P\setminus \lbrace i \rbrace}x_i$, i.e. such that $x_P=(-1)^{s(P,i)}x_{P\setminus\lbrace i \rbrace}x_i$. 
\end{lemma}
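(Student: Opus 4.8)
The plan is to read the centralizer condition off directly, by expanding $a\Delta(x_i)-\Delta(x_i)a$ on the basis and invoking linear independence. Since $\Delta(x_i)=x_i\otimes 1+g\otimes x_i$, I would first record the straightening rules coming from the defining relations: $x_ig^j=(-1)^jg^jx_i$ and $x_Pg=(-1)^{|P|}gx_P$, together with $x_ix_P=(-1)^{|P|}x_Px_i$ and $x_Px_i=(-1)^{s(P\cup\{i\},i)}x_{P\cup\{i\}}$ when $i\notin P$, while $x_ix_P=x_Px_i=0$ when $i\in P$ because $x_i^2=0$. Feeding a single basis term $g^jx_P\otimes g^kx_Q$ into the commutator and applying these rules, I would obtain a \emph{first--slot} contribution
\[
g^j\big[(-1)^{j+|P|}-1\big]\,x_Px_i\otimes g^kx_Q
\]
and a \emph{second--slot} contribution
\[
g^{j+1}x_P\otimes g^k\big[(-1)^{k+|Q|}-(-1)^{|P|}\big]\,x_Qx_i,
\]
the first being identically zero when $i\in P$ and the second when $i\in Q$.

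Next I would organise the sum over all basis terms according to the membership of $i$ in $P$ and in $Q$, and then collect the coefficient of each target monomial $g^mx_S\otimes g^lx_T$. A term with $i\notin P\cup Q$ feeds, through its two slots, only targets in which $x_i$ occupies \emph{exactly one} tensor factor, whereas a term with $i\in P,\ i\notin Q$ (resp. $i\notin P,\ i\in Q$) feeds, through its single surviving slot, only targets in which $x_i$ occupies \emph{both} factors. Consequently, after imposing $a\Delta(x_i)=\Delta(x_i)a$ and using linear independence, the targets split into three families. Targets with $x_i$ in exactly one factor receive a single contribution, so their vanishing forces $\alpha^{P,Q}_{j,k}=0$ precisely when the relevant bracket $(-1)^{j+|P|}-1$ or $(-1)^{k+|Q|}-(-1)^{|P|}$ is nonzero; running the four parity cases for $|P|,|Q|$ reproduces exactly the vanishing rows of the first table. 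Targets with $x_i$ in neither factor receive no contribution and impose nothing.

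The substance is in the targets with $x_i$ in both factors. Such a target $g^mx_S\otimes g^lx_T$, with $i\in S$ and $i\in T$, receives exactly two contributions: one from a coefficient having $i$ in its second index (reinserting $x_i$ into the first slot, producing the sign $(-1)^{s(P\cup\{i\},i)}$ with $S=P\cup\{i\}$), and one from a coefficient having $i$ in its first index (reinserting $x_i$ into the second slot, producing $(-1)^{s(Q\cup\{i\},i)}$ with $T=Q\cup\{i\}$). Setting the sum of the two to zero gives, when both brackets are nonzero, a genuine linear relation between $\alpha^{P,Q\cup\{i\}}$ and $\alpha^{P\cup\{i\},Q}$ --- the linking identities of the first table; when exactly one bracket vanishes it collapses to a single vanishing condition, yielding the entries of the second and third tables; and when both brackets vanish it is empty. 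A final parity bookkeeping in the four cases $|P|,|Q|$ even/odd distributes these outcomes across the cells.

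The main obstacle is the sign accounting in this last step: one must track the factor $(-1)^{s(P\cup\{i\},i)+s(Q\cup\{i\},i)}$ produced by the two independent reinsertions of $x_i$, and match it with the single symbol $(-1)^{s(P\cup Q\cup\{i\},i)}$ recorded in the statement. These agree once the union $P\cup Q\cup\{i\}$ is read with the appropriate multiplicity, since then $s(P\cup\{i\},i)+s(Q\cup\{i\},i)\equiv s(P\cup Q\cup\{i\},i)\pmod 2$, both sides counting the indices exceeding $i$; the delicate point is precisely that an index common to $P$ and $Q$ must be counted in each factor separately, as the two reinsertions take place in independent tensor slots. Everything else is a finite, if lengthy, case check driven by the parities of $|P|$, $|Q|$, $m$, and $l$.
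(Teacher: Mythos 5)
Your proposal is correct and follows essentially the same route as the paper's own proof: the paper likewise expands $a\Delta(x_i)-\Delta(x_i)a$ into the two bracket-weighted contributions $[(-1)^{j+|P|}-1]\,g^jx_Px_i\otimes g^kx_Q$ and $[(-1)^{k+|Q|}-(-1)^{|P|}]\,g^{j+1}x_P\otimes g^kx_Qx_i$, splits according to the membership of $i$ in $P$, $Q$ and the parities, and extracts the vanishing conditions and the four linking equations by linear independence, so your organization by target monomial (single-contribution targets giving the vanishing entries, double-contribution targets giving the linking identities or the second and third tables) is just the dual bookkeeping of the paper's rearranged sums. Your closing sign remark is also on point: the paper's proof, exactly like yours, produces the two independent signs $(-1)^{s(P\cup\{i\},i)}$ and $(-1)^{s(Q\cup\{i\},i)}$, so the symbol $(-1)^{s(P\cup Q\cup\{i\},i)}$ in the tables must indeed be read with the multiplicity convention you describe, i.e.\ as $(-1)^{s(P\cup\{i\},i)+s(Q\cup\{i\},i)}$, whenever $P\cap Q\neq\emptyset$.
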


\begin{proof}
Fix an $x_i$. Condition $\Delta(x_i)a=a\Delta(x_i)$ reads
\[\sum_{\underset{P \not \ni i,Q}{j,k=0}}^1 \alpha_{j,k}^{P ,Q} [(-1)^{j+|P|}-1](g^jx_Px_i \otimes g^kx_Q)+\sum_{\underset{P,Q \not \ni i}{j,k=0}}^1 \alpha_{j,k}^{P,Q}[(-1)^{k+|Q|}-(-1)^{|P|}](g^{j+1}x_P \otimes g^kx_Qx_i)=0,\]
i.e.
\[\sum_{|P| \textrm{ odd}, P \not \ni i,Q} \big[-\alpha_{0,0}^{P ,Q}(x_Px_i \otimes x_Q)-\alpha_{0,1}^{P ,Q}(x_Px_i \otimes gx_Q)\big]+\sum_{|P| \textrm{ even}, P \not \ni i,Q}\big[-\alpha_{1,0}^{P ,Q} (gx_Px_i \otimes x_Q)-\alpha_{1,1}^{P ,Q} (gx_Px_i \otimes gx_Q)\big]+\]
\[+\sum_{P,Q \not \ni i, \ |P|+|Q| \textrm{ odd}} (-1)^{|Q|}\big[\alpha_{0,0}^{P,Q}(gx_P \otimes x_Qx_i)+\alpha_{1,0}^{P,Q}(x_P \otimes x_Qx_i)\big]+\]
\[+\sum_{P,Q \not \ni i, \ |P|+|Q| \textrm{ even}}(-1)^{|Q|+1}\big[\alpha_{0,1}^{P,Q}(gx_P \otimes gx_Qx_i)+\alpha_{1,1}^{P,Q}(x_P \otimes gx_Qx_i)\big]=0.\]

To better understand what is happening, let us rearrange everything in terms of two properties: the parity of each cardinality $|P|, |Q|$ and the inclusion $i \in P, i \in Q$.

\[\sum_{\underset{|Q| \textrm{ odd}, Q \not \ni i}{|P| \textrm{ odd}, P \not \ni i}} \big[-\alpha_{0,0}^{P ,Q}(x_Px_i \otimes x_Q)-\alpha_{0,1}^{P ,Q}(x_Px_i \otimes gx_Q)\big]+\sum_{\underset{|Q| \textrm{ odd}, Q \ni i}{|P| \textrm{ odd}, P \not \ni i}}\big[ -\alpha_{0,0}^{P ,Q}(x_Px_i \otimes x_Q)-\alpha_{0,1}^{P ,Q}(x_Px_i \otimes gx_Q)\big]+\]
\[+\sum_{\underset{|Q| \textrm{ even}, Q \not \ni i}{|P| \textrm{ odd}, P \not \ni i}} \big[-\alpha_{0,0}^{P ,Q}(x_Px_i \otimes x_Q)-\alpha_{0,1}^{P ,Q}(x_Px_i \otimes gx_Q)\big]+\sum_{\underset{|Q| \textrm{ even}, Q \ni i}{|P| \textrm{ odd}, P \not \ni i}}\big[ -\alpha_{0,0}^{P ,Q}(x_Px_i \otimes x_Q)-\alpha_{0,1}^{P ,Q}(x_Px_i \otimes gx_Q)\big]+\]
\[+\sum_{\underset{|Q| \textrm{ odd}, Q \not \ni i}{|P| \textrm{ even}, P \not \ni i}}\big[-\alpha_{1,0}^{P ,Q} (gx_Px_i \otimes x_Q)-\alpha_{1,1}^{P ,Q} (gx_Px_i \otimes gx_Q)\big]+\sum_{\underset{|Q| \textrm{ odd}, Q \ni i}{|P| \textrm{ even}, P \not \ni i}}\big[-\alpha_{1,0}^{P ,Q} (gx_Px_i \otimes x_Q)-\alpha_{1,1}^{P ,Q} (gx_Px_i \otimes gx_Q)\big]+\]
\[+\sum_{\underset{|Q| \textrm{ even}, Q \not \ni i}{|P| \textrm{ even}, P \not \ni i}}\big[-\alpha_{1,0}^{P ,Q} (gx_Px_i \otimes x_Q)-\alpha_{1,1}^{P ,Q} (gx_Px_i \otimes gx_Q)\big]+\sum_{\underset{|Q| \textrm{ even}, Q \ni i}{|P| \textrm{ even}, P \not \ni i}}\big[-\alpha_{1,0}^{P ,Q} (gx_Px_i \otimes x_Q)-\alpha_{1,1}^{P ,Q} (gx_Px_i \otimes gx_Q)\big]+\]

\[+\sum_{\underset{|Q| \textrm{ even},Q \not \ni i}{|P| \textrm{\ odd}, P \not \ni i}} \big[\alpha_{0,0}^{P,Q}(gx_P \otimes x_Qx_i)+\alpha_{1,0}^{P,Q}(x_P \otimes x_Qx_i)\big]+\sum_{\underset{|Q| \textrm{ even},Q \not \ni i}{|P| \textrm{\ odd}, P \ni i}} \big[\alpha_{0,0}^{P,Q}(gx_P \otimes x_Qx_i)+\alpha_{1,0}^{P,Q}(x_P \otimes x_Qx_i)\big]+\]
\[+\sum_{\underset{|Q| \textrm{ odd},Q \not \ni i}{|P| \textrm{\ even}, P \not \ni i}} \big[-\alpha_{0,0}^{P,Q}(gx_P \otimes x_Qx_i)-\alpha_{1,0}^{P,Q}(x_P \otimes x_Qx_i)\big]+\sum_{\underset{|Q| \textrm{ odd},Q \not \ni i}{|P| \textrm{\ even}, P \ni i}} \big[-\alpha_{0,0}^{P,Q}(gx_P \otimes x_Qx_i)-\alpha_{1,0}^{P,Q}(x_P \otimes x_Qx_i)\big]+\]

\[+\sum_{\underset{|Q| \textrm{ even}, Q \not \ni i}{|P| \textrm{ even}, P \not \ni i}}\big[-\alpha_{0,1}^{P,Q}(gx_P \otimes gx_Qx_i)-\alpha_{1,1}^{P,Q}(x_P \otimes gx_Qx_i)\big]+\sum_{\underset{|Q| \textrm{ even}, Q \not \ni i}{|P| \textrm{ even}, P \ni i}}\big[-\alpha_{0,1}^{P,Q}(gx_P \otimes gx_Qx_i)-\alpha_{1,1}^{P,Q}(x_P \otimes gx_Qx_i)\big]+\]
\[+\sum_{\underset{|Q| \textrm{ odd}, Q \not \ni i}{|P| \textrm{ odd}, P \not \ni i}}\big[\alpha_{0,1}^{P,Q}(gx_P \otimes gx_Qx_i)+\alpha_{1,1}^{P,Q}(x_P \otimes gx_Qx_i)\big]+\sum_{\underset{|Q| \textrm{ odd}, Q \not \ni i}{|P| \textrm{ odd}, P \ni i}}\big[\alpha_{0,1}^{P,Q}(gx_P \otimes gx_Qx_i)+\alpha_{1,1}^{P,Q}(x_P \otimes gx_Qx_i)\big]=0.\]

By thorough inspection we see that $\alpha_{0,0}^{P,Q}=\alpha_{0,1}^{P,Q}=0$ for all $P,Q$ with $|P|$ odd and $P \cup Q \not \ni i$ and also that $\alpha_{1,0}^{P,Q}=\alpha_{1,1}^{P,Q}=0$ for all $P,Q$ with $|P|$ even and $P \cup Q \not \ni i$. Similarly, when $P \cup Q \not \ni i$, also $\alpha_{1,0}^{P,Q}=0$ for $|P|$ odd and $|Q|$ even, $\alpha_{0,0}^{P,Q}=0$ for $|P|$ even and $|Q|$ odd, $\alpha_{0,1}^{P,Q}=0$ for $|P|$ even and $|Q|$ even, $\alpha_{1,1}^{P,Q}=0$ for $|P|$ odd and $|Q|$ odd.

We are left with

\[\sum_{\underset{|Q| \textrm{ odd}, Q \ni i}{|P| \textrm{ odd}, P \not \ni i}} \big[-\alpha_{0,0}^{P ,Q}(x_Px_i \otimes x_Q)-\alpha_{0,1}^{P ,Q}(x_Px_i \otimes gx_Q)\big]+\sum_{\underset{|Q| \textrm{ even}, Q \ni i}{|P| \textrm{ odd}, P \not \ni i}}\big[ -\alpha_{0,0}^{P ,Q}(x_Px_i \otimes x_Q)-\alpha_{0,1}^{P ,Q}(x_Px_i \otimes gx_Q)\big]+\]
\[+\sum_{\underset{|Q| \textrm{ odd}, Q \ni i}{|P| \textrm{ even}, P \not \ni i}}\big[-\alpha_{1,0}^{P ,Q} (gx_Px_i \otimes x_Q)-\alpha_{1,1}^{P ,Q} (gx_Px_i \otimes gx_Q)\big]+\sum_{\underset{|Q| \textrm{ even}, Q \ni i}{|P| \textrm{ even}, P \not \ni i}}\big[-\alpha_{1,0}^{P ,Q} (gx_Px_i \otimes x_Q)-\alpha_{1,1}^{P ,Q} (gx_Px_i \otimes gx_Q)\big]+\]

\[+\sum_{\underset{|Q| \textrm{ even},Q \not \ni i}{|P| \textrm{\ odd}, P \ni i}}\big[ \alpha_{0,0}^{P,Q}(gx_P \otimes x_Qx_i)+\alpha_{1,0}^{P,Q}(x_P \otimes x_Qx_i)\big]+\sum_{\underset{|Q| \textrm{ odd},Q \not \ni i}{|P| \textrm{\ even}, P \ni i}}\big[ -\alpha_{0,0}^{P,Q}(gx_P \otimes x_Qx_i)-\alpha_{1,0}^{P,Q}(x_P \otimes x_Qx_i)\big]+\]
\[+\sum_{\underset{|Q| \textrm{ even}, Q \not \ni i}{|P| \textrm{ even}, P \ni i}}\big[-\alpha_{0,1}^{P,Q}(gx_P \otimes gx_Qx_i)-\alpha_{1,1}^{P,Q}(x_P \otimes gx_Qx_i)\big]+\sum_{\underset{|Q| \textrm{ odd}, Q \not \ni i}{|P| \textrm{ odd}, P \ni i}}\big[\alpha_{0,1}^{P,Q}(gx_P \otimes gx_Qx_i)+\alpha_{1,1}^{P,Q}(x_P \otimes gx_Qx_i)\big]=0.\]

Further inspection shows that when $P \not \ni i$, $Q \ni i$, we must have  $\alpha_{0,0}^{P,Q}=0$ for $|P|$ and $|Q|$ odd, $\alpha_{0,1}^{P,Q}=0$, when $|P|$ is odd and $|Q|$ is even, $\alpha_{1,1}^{P,Q}=0$, when $|P|$ is even and $|Q|$ is odd, $\alpha_{1,0}^{P,Q}=0$, when $|P|$ and $|Q|$ are even.
Also when $P \ni i$ and $Q \not \ni i$, we have $\alpha_{1,0}^{P,Q}=0$ when $|P|$ is odd and $|Q|$ is even, $\alpha_{0,0}^{P,Q}=0$ when $|P|$ is even and $|Q|$ is odd, $\alpha_{0,1}^{P,Q}=0$ when $|P|$ and $|Q|$ are even and $\alpha_{1,1}^{P,Q}=0$ when both $|P|$ and $|Q|$ are odd. By comparing the remaining terms, we find four equations:

\[\sum_{\underset{|Q| \textrm{ odd}, Q \ni i}{|P| \textrm{ odd}, P \not \ni i}}\alpha_{0,1}^{P ,Q}(x_Px_i \otimes gx_Q)+\sum_{\underset{|Q| \textrm{ even}, Q \not \ni i}{|P| \textrm{ even}, P \ni i}}\alpha_{1,1}^{P,Q}(x_P \otimes gx_Qx_i)=0\]
\[\sum_{\underset{|Q| \textrm{ even}, Q \ni i}{|P| \textrm{ odd}, P \not \ni i}} \alpha_{0,0}^{P ,Q}(x_Px_i \otimes x_Q)+\sum_{\underset{|Q| \textrm{ odd},Q \not \ni i}{|P| \textrm{\ even}, P \ni i}} \alpha_{1,0}^{P,Q}(x_P \otimes x_Qx_i)=0\]
\[\sum_{\underset{|Q| \textrm{ odd}, Q \ni i}{|P| \textrm{ even}, P \not \ni i}}\alpha_{1,0}^{P ,Q} (gx_Px_i \otimes x_Q)-\sum_{\underset{|Q| \textrm{ even},Q \not \ni i}{|P| \textrm{\ odd}, P \ni i}} \alpha_{0,0}^{P,Q}(gx_P \otimes x_Qx_i)=0\]
\[\sum_{\underset{|Q| \textrm{ even}, Q \ni i}{|P| \textrm{ even}, P \not \ni i}}\alpha_{1,1}^{P ,Q} (gx_Px_i \otimes gx_Q)-\sum_{\underset{|Q| \textrm{ odd}, Q \not \ni i}{|P| \textrm{ odd}, P \ni i}}\alpha_{0,1}^{P,Q}(gx_P \otimes gx_Qx_i)=0.\]

Let $j \in \lbrace 1, \ldots, n \rbrace$ and $j \in P \subseteq \lbrace 1, \ldots ,n \rbrace$. Define $s(P,j)$ to be the number of consecutive swaps to transform $x_P$ in the form $x_{P\setminus\lbrace j \rbrace}x_j$. It is clear that $x_P=(-1)^{s(P,j)}x_{P\setminus\lbrace j \rbrace}x_j$. 
Then, the previous four equations become

\[\sum_{\underset{|Q| \textrm{ even}, Q \not \ni i}{|P| \textrm{ odd}, P \not \ni i}}[(-1)^{s(Q\cup \lbrace i \rbrace,i)}\alpha_{0,1}^{P ,Q\cup \lbrace i \rbrace}+(-1)^{s(P\cup \lbrace i \rbrace,i)}\alpha_{1,1}^{P \cup \lbrace i \rbrace ,Q}](x_Px_i \otimes gx_Qx_i)=0\]
\[\sum_{\underset{|Q| \textrm{ odd}, Q \not \ni i}{|P| \textrm{ odd}, P \not \ni i}} [(-1)^{s(Q\cup \lbrace i \rbrace,i)}\alpha_{0,0}^{P ,Q\cup \lbrace i \rbrace}+(-1)^{s(P\cup \lbrace i \rbrace,i)}\alpha_{1,0}^{P \cup \lbrace i \rbrace,Q}](x_Px_i \otimes x_Qx_i)=0\]
\[\sum_{\underset{|Q| \textrm{ even}, Q \not \ni i}{|P| \textrm{ even}, P \not \ni i}} [(-1)^{s(Q \cup \lbrace i \rbrace ,i)}\alpha_{1,0}^{P ,Q\cup \lbrace i \rbrace} -(-1)^{s(P \cup \lbrace i \rbrace,i)}\alpha_{0,0}^{P \cup \lbrace i \rbrace,Q}](gx_Px_i \otimes x_Qx_i)=0\]
\[\sum_{\underset{|Q| \textrm{ odd}, Q \not \ni i}{|P| \textrm{ even}, P \not \ni i}}[(-1)^{s(Q \cup \lbrace i \rbrace, i)}\alpha_{1,1}^{P ,Q \cup \lbrace i \rbrace} -(-1)^{s(P \cup \lbrace i \rbrace,i)}\alpha_{0,1}^{P \cup \lbrace i \rbrace,Q}](gx_Px_i \otimes gx_Qx_i)=0.\qedhere\]
\end{proof}
\end{invisible}

\begin{corollary} 
    Let $b^1(\xi)$ as in \eqref{eq:b1}, with $\xi^{1}_{\emptyset}=0$. Then, $b^1(\xi)$ satisfies \eqref{cqtr1} if, and only if, \begin{equation}\label{eq:b1xii}
b^1(\xi)=-\sum_{0\neq |P|\ \text{even}}\xi^0_P\sum_{\emptyset\not=F\subsetneq P}{(-1)^{S(F,P)}g^{|F|}x_{P\setminus F}\otimes x_{F}}.
\end{equation}
As a consequence, the element
\begin{equation}\label{eq:chiax1}
\chi=-\sum_{0\neq |P|\ \text{even}}\xi^0_P\sum_{\emptyset\not=F\subsetneq P}{(-1)^{S(F,P)}g^{|F|}x_{P\setminus F}\otimes x_{F}}+\chi',
\end{equation}
for $\xi^0_P\in\Bbbk$, satisfies \eqref{cqtr1}.
\end{corollary}
\begin{proof}
By Lemma \ref{Centg}, we have that $b^{1}(\xi)$ as in \eqref{eq:b1}, with $\xi^{1}_{\emptyset}=0$, satisfies \eqref{cqtr1} on $g$ if and only if $\xi^{j}_{P}=0$ with $|P|$ odd, i.e. 
\begin{equation}\label{eq:b1xi}
\begin{split}
b^1(\xi)&=
\sum_{0\neq |P|\ \text{even}}{\xi^{1}_{P}1\otimes gx_{P}}+\sum_{0\neq |P|\ \text{even}}{\xi^{1}_{P}gx_{P}\otimes 1}\\&-\sum_{\underset{0\neq |P|\ \text{even}}{j=0,1}}\xi^j_P\sum_{\emptyset\not=F\subsetneq P}{(-1)^{S(F,P)}g^{|F|+j}x_{P\setminus F}\otimes g^{j}x_{F}}\\&-\sum_{0\neq |P|\ \text{even}}{\xi^{1}_{P}g\otimes gx_{P}}-\sum_{0\neq |P|\ \text{even}}{\xi^{1}_{P}gx_{P}\otimes g}.
\end{split}
\end{equation}

We observe that $b^{1}(\xi)$ as in \eqref{eq:b1xi} is a summation of elements $\sum_{\underset{P,Q}{j,k=0}}^1 \alpha_{j,k}^{P,Q} g^jx_P \otimes g^kx_Q$ such that $0\not=|P\cup Q|$ is even. Let us consider two different cases, namely $n$ odd and $n$ even. If $n$ is odd, for every $P$ with $0\not=|P|$ even there exists an $i\notin P$. If we consider the first addendum in \eqref{eq:b1xi}, we have $\xi^1_P=\alpha^{\emptyset, P}_{0,1}$ with $|P|$ even, and thus we get $\xi^1_P=0$ by Lemma \ref{centx}. 
Then, we obtain \eqref{eq:b1xii}.

Suppose that $n$ is even. For every $0\neq |P|<n$ even, we can apply the same arguments as in the previous case. Thus, denoting $\textbf{n}:=\lbrace 1, \ldots, n \rbrace$, we get
\[
\begin{split}
b^1(\xi)&=\xi^1_{\textbf{n}}1\otimes gx_{{\textbf{n}}}+\xi^1_{\textbf{n}}gx_{\textbf{n}}\otimes 1- {\xi^{1}_{\textbf{n}}g\otimes gx_{\textbf{n}}}-\xi^{1}_{\textbf{n}}gx_{\textbf{n}}\otimes g\\&-\sum_{j=0,1}\xi^j_\textbf{n}\sum_{\emptyset\not=F\subsetneq \textbf{n}}{(-1)^{S(F,\textbf{n})}g^{|F|+j}x_{\textbf{n}\setminus F}\otimes g^{j}x_{F}}\\&-\sum_{0\neq |P|<n\ \text{even}}\xi^0_P\sum_{\emptyset\not=F\subsetneq P}{(-1)^{S(F,P)}g^{|F|}x_{P\setminus F}\otimes x_{F}}.
\end{split}
\]
In order to satisfy \eqref{cqtr1} for any $x_{i}$ (clearly $i\in\textbf{n}$), by linear independence, we must have $\xi^{1}_{\textbf{n}}=0$ (look at the second or third addendum), so  we get again \eqref{eq:b1xii}.
One can check that \eqref{cqtr1} is satisfied by $b^{1}(\xi)$ of the form \eqref{eq:b1xii}. By Remark \ref{rmk:chi'}, it follows that $\chi$ as in \eqref{eq:chiax1} satifies \eqref{cqtr1}.
\end{proof}

\begin{invisible}OLD VERSION
\begin{remark} \rd{Modicare in corollario+proof.}
We have that $b^{1}(\xi)$ as in \eqref{eq:b1}, with $\xi^{1}_{\emptyset}=0$, satisfies \eqref{cqtr1} on $g$ if and only if $\xi^{j}_{P}=0$ with $|P|$ odd, i.e. 
\begin{equation}
\begin{split}
b^1(\xi)&=
\sum_{0\neq |P|\ \text{even}}{\xi^{1}_{P}1\otimes gx_{P}}+\sum_{0\neq |P|\ \text{even}}{\xi^{1}_{P}gx_{P}\otimes 1}\\&-\sum_{\underset{0\neq |P|\ \text{even}}{j=0,1}}\xi^j_P\sum_{\emptyset\not=F\subsetneq P}{(-1)^{S(F,P)}g^{|F|+j}x_{P\setminus F}\otimes g^{j}x_{F}}\\&-\sum_{0\neq |P|\ \text{even}}{\xi^{1}_{P}g\otimes gx_{P}}-\sum_{0\neq |P|\ \text{even}}{\xi^{1}_{P}gx_{P}\otimes g}.
\end{split}
\end{equation}
\end{remark}

We observe that $b^{1}(\xi)$ as in \eqref{eq:b1xi} is a summation of elements $\sum_{\underset{P,Q}{j,k=0}}^1 \alpha_{j,k}^{P,Q} g^jx_P \otimes g^kx_Q$ such that $0\not=|P\cup Q|$ is even. Let us consider two different cases, namely $n$ odd and $n$ even. If $n$ is odd, for every $P$ with $0\not=|P|$ even there exists an $i\notin P$. 
By Lemma \ref{centx}, \rd{\sout{we look at the first table where $|P|$ is even and $|Q|$ is even;}} if we consider the first addendum in \eqref{eq:b1xi}, we have that 
$\xi^1_P=\alpha^{\emptyset, P}_{0,1}$ with $|P|$ even, and thus we get $\xi^1_P=0$. 
Then, we obtain
\begin{equation}
b^1(\xi)=-\sum_{0\neq |P|\ \text{even}}\xi^0_P\sum_{\emptyset\not=F\subsetneq P}{(-1)^{S(F,P)}g^{|F|}x_{P\setminus F}\otimes x_{F}}.
\end{equation}

Suppose that $n$ is even. For every $0\neq |P|<n$ even, we can apply the same arguments as in the previous case. Thus, denoting $\textbf{n}:=\lbrace 1, \ldots, n \rbrace$, we get
\[
\begin{split}
b^1(\xi)&=\xi^1_{\textbf{n}}1\otimes gx_{{\textbf{n}}}+\xi^1_{\textbf{n}}gx_{\textbf{n}}\otimes 1- {\xi^{1}_{\textbf{n}}g\otimes gx_{\textbf{n}}}-\xi^{1}_{\textbf{n}}gx_{\textbf{n}}\otimes g\\&-\sum_{j=0,1}\xi^j_\textbf{n}\sum_{\emptyset\not=F\subsetneq \textbf{n}}{(-1)^{S(F,\textbf{n})}g^{|F|+j}x_{\textbf{n}\setminus F}\otimes g^{j}x_{F}}\\&-\sum_{0\neq |P|<n\ \text{even}}\xi^0_P\sum_{\emptyset\not=F\subsetneq P}{(-1)^{S(F,P)}g^{|F|}x_{P\setminus F}\otimes x_{F}}.
\end{split}
\]
In order to satisfy \eqref{cqtr1} for any $x_{i}$ (clearly $i\in\textbf{n}$), by linear independence, we must have $\xi^{1}_{\textbf{n}}=0$ (look at the second or third addendum) so  we get again \eqref{eq:b1xii}.

By observing that \eqref{cqtr1} is satisfied by $b^{1}(\xi)$ of the form \eqref{eq:b1xii} we obtain the following result.

\begin{corollary}
    The element
\begin{equation}\label{eq:chiax1}
\chi=-\sum_{0\neq |P|\ \text{even}}\xi^0_P\sum_{\emptyset\not=F\subsetneq P}{(-1)^{S(F,P)}g^{|F|}x_{P\setminus F}\otimes x_{F}}+\sum_{1\leq i\leq l\leq n}\alpha_{i,l}gx_i\otimes x_l,
\end{equation}
for $\xi^0_P, \alpha_{i,l}\in\Bbbk$ satisfies \eqref{cqtr1}. 
\end{corollary}
\end{invisible}
   Hereafter, we rewrite $\chi$ as $\chi=\tilde{\chi}+\hat{\chi}$, where 
\begin{align}\label{eq:tilde}\tilde{\chi}&=-\sum_{ |P|\geq 4 \ \text{even}}\xi^0_P\sum_{\emptyset\not=F\subsetneq P}{(-1)^{S(F,P)}g^{|F|}x_{P\setminus F}\otimes x_{F}},\\ 
\hat{\chi}&=\sum_{p,q=1}^n\gamma_{pq}gx_p\otimes x_q,\label{eq:hat}\end{align}
    with $\xi^0_P,\gamma_{pq}\in\Bbbk$.

Now we look at axiom \eqref{cqtr2}. First, we show a preliminary result.

\begin{lemma}\label{Rswap}
For any quasitriangular structure $\Rr$ of $E(n)$, the following equalities hold
\begin{eqnarray}
\mathcal{R}(g\otimes g)& =&(g\otimes g)\Rr\label{Rswap0}\\
\mathcal{R}(x_p \otimes 1)& =& (x_p \otimes g)\mathcal{R} \label{Rswap1}\\
\mathcal{R}(g \otimes x_q) &=& (1 \otimes x_q)\mathcal{R}, \label{Rswap2}
\end{eqnarray}
for $p,q=1,\ldots, n$.
\end{lemma}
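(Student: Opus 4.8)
The plan is to derive all three identities from the single axiom of quasi-cocommutativity \eqref{qtr1}, evaluated on the algebra generators, together with the natural grading of $E(n)$ by the number of $x$-letters.

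First I would dispose of \eqref{Rswap0}. Since $g$ is grouplike, $\Delta(g)=\Delta^{\mathrm{op}}(g)=g\otimes g$, so \eqref{qtr1} applied to $g$ reads $\Rr(g\otimes g)=(g\otimes g)\Rr$, which is exactly \eqref{Rswap0}. Next, for each generator $x_i$ I would apply \eqref{qtr1} to $x_i$. From $\Delta(x_i)=x_i\otimes 1+g\otimes x_i$ and $\Delta^{\mathrm{op}}(x_i)=1\otimes x_i+x_i\otimes g$ one obtains the single combined relation
\[
\Rr(x_i\otimes 1)+\Rr(g\otimes x_i)=(1\otimes x_i)\Rr+(x_i\otimes g)\Rr .
\]
The remaining task is to split this identity into the two separate ones \eqref{Rswap1} and \eqref{Rswap2}, and this is where the main idea enters.

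The key observation is that $E(n)$ carries an $\mathbb{N}$-grading by $x$-degree, declaring $\deg g=0$ and $\deg x_i=1$: all the defining relations $g^2=1$, $x_i^2=0$, $gx_i=-x_ig$, $x_ix_j=-x_jx_i$ are homogeneous, so the grading is well defined, and on the basis one has $\deg(g^jx_P)=|P|$. Consequently $E(n)\otimes E(n)$ is bigraded by the $x$-degree in each tensor factor. By the explicit description \eqref{RstructureE(n)} of $R_A$ — which, by \cite[Proposition 2]{PVO}, exhausts the quasitriangular structures — every monomial of $\Rr$ is either in the grouplike part (bidegree $(0,0)$) or of the form $g^{|P|}x_F\otimes(\cdot)\,x_P$ with $|F|=|P|$. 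Thus $\Rr$ is \emph{balanced}, i.e. supported in bidegrees of the shape $(k,k)$.

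Now right multiplication by $x_i\otimes 1$ raises the left $x$-degree by $1$ and fixes the right one (or annihilates a monomial, when $x_i^2$ would occur), while right multiplication by $g\otimes x_i$ fixes the left degree and raises the right one; the analogous statements hold for the two left multiplications on the right-hand side. Hence $\Rr(x_i\otimes 1)$ and $(x_i\otimes g)\Rr$ are supported in bidegrees $(k+1,k)$, whereas $\Rr(g\otimes x_i)$ and $(1\otimes x_i)\Rr$ are supported in bidegrees $(k,k+1)$. Since $(k+1,k)\neq(k,k+1)$, projecting the combined relation onto each bigraded component decouples it into the two equalities $\Rr(x_i\otimes 1)=(x_i\otimes g)\Rr$ and $\Rr(g\otimes x_i)=(1\otimes x_i)\Rr$; letting $i=p$, respectively $i=q$, range over $\{1,\dots,n\}$ gives \eqref{Rswap1} and \eqref{Rswap2}. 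I expect the only delicate point to be the bookkeeping that establishes the balanced-grading property of $\Rr$ and the degree shifts of the four summands; once that is in place, the separation is automatic and no case-by-case computation involving the coefficients $\det(P,F)$ is required.
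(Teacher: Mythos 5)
Your proposal is correct, and it takes a genuinely different route from the paper's proof. The paper proves \eqref{Rswap0} by noting it is immediate from the explicit formula \eqref{RstructureE(n)}, proves \eqref{Rswap1} by a term-by-term expansion of $2[\mathcal{R}(x_p\otimes 1)-(x_p\otimes g)\mathcal{R}]$ using the full formula for $R_A$ (moving $x_p$ across each monomial $g^{|P|}x_F\otimes g^jx_P$ and watching the signs and $\det(P,F)$ coefficients cancel), and only then deduces \eqref{Rswap2} from \eqref{Rswap1} together with quasi-cocommutativity \eqref{qtr1} applied to $x_q$ --- exactly the ``combined relation'' you write down. You instead start from \eqref{qtr1} throughout: it gives \eqref{Rswap0} at once since $g$ is grouplike, and it gives the combined relation for each $x_i$; your bidegree argument then decouples that relation into \eqref{Rswap1} and \eqref{Rswap2} simultaneously. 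The decoupling is sound: the $x$-degree grading of $E(n)$ is well defined (all defining relations are homogeneous), right or left multiplication by $x_i\otimes 1$ and $x_i\otimes g$ shifts bidegree by $(+1,0)$ while multiplication by $g\otimes x_i$ and $1\otimes x_i$ shifts it by $(0,+1)$, and since $\mathcal{R}=R_A$ is supported in diagonal bidegrees $(k,k)$ by \cite[Proposition 2]{PVO}, the two halves of the combined relation live in complementary bigraded subspaces, hence vanish separately. Both proofs ultimately invoke the explicit classification of the quasitriangular structures, but to different depths: the paper needs the precise coefficients for its cancellation, whereas you need only the coarse support (``balanced'') property. What your approach buys is the elimination of all $\det(P,F)$ bookkeeping and a single uniform mechanism yielding all three identities; what the paper's computation buys is independence from the grading observation, at the cost of a longer and less conceptual verification.
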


\begin{proof}
Let $\Rr$ be as in \eqref{RstructureE(n)}. First \eqref{Rswap0} follows trivially. Moreover,  \eqref{Rswap1} follows as
\begin{eqnarray*}
2[\mathcal{R}(x_p \otimes 1) - (x_p \otimes g)\mathcal{R}]&=&(x_p \otimes 1 + x_p \otimes g + gx_p \otimes 1-gx_p \otimes g)+\sum_{|F|=|P|}(-1)^{\frac{|P|(|P|-1)}{2}}\det(P,F) \times\\ 
&\times& (g^{|P|}x_Fx_p \otimes x_P + g^{|P|}x_Fx_p \otimes gx_P +g^{|P|+1}x_Fx_p \otimes x_P -  g^{|P|+1}x_Fx_p \otimes gx_P)+\\
&-&(x_p \otimes g + x_p \otimes 1 +x_pg \otimes g-x_pg \otimes 1)-[\sum_{|F|=|P|}(-1)^{\frac{|P|(|P|-1)}{2}}\det(P,F) \times\\ 
&\times& (x_pg^{|P|}x_F \otimes gx_P + x_pg^{|P|}x_F \otimes x_P +x_pg^{|P|+1}x_F     \otimes gx_P -  x_pg^{|P|+1}x_F \otimes x_P)]\\
&=&(x_p \otimes 1 + x_p \otimes g + gx_p \otimes 1-gx_p \otimes g)+\sum_{|F|=|P|}(-1)^{\frac{|P|(|P|-1)}{2}}\det(P,F) \times\\ 
&\times& (g^{|P|}x_Fx_p \otimes x_P + g^{|P|}x_Fx_p \otimes gx_P +g^{|P|+1}x_Fx_p \otimes x_P -  g^{|P|+1}x_Fx_p \otimes gx_P)+\\
&-&(x_p \otimes g + x_p \otimes 1 -gx_p \otimes g+gx_p \otimes 1)-[\sum_{|F|=|P|}(-1)^{\frac{|P|(|P|-1)}{2}}\det(P,F) \times\\ 
&\times& (g^{|P|}x_Fx_p \otimes gx_P + g^{|P|}x_Fx_p \otimes x_P -g^{|P|+1}x_Fx_p \otimes gx_P +g^{|P|+1}x_Fx_p \otimes x_P)]\\
&=&0.
\end{eqnarray*}

\begin{invisible}
\begin{eqnarray*}
2[\mathcal{R}(g \otimes x_q) - (1_H \otimes x_q)\mathcal{R}]&=&(g \otimes x_q + g \otimes gx_q + 1_H \otimes x_q-1_H \otimes gx_q)+[\sum_{|F|=|P|}(-1)^{\frac{|P|(|P|-1)}{2}}\det(P,F) \times \\ 
&\times& (g^{|P|}x_Fg \otimes x_Px_q + g^{|P|}x_Fg \otimes gx_Px_q +g^{|P|+1}x_Fg     \otimes x_Px_q -  g^{|P|+1}x_Fg \otimes gx_Px_q)]+\\
&-&(1 \otimes x_q - 1 \otimes gx_q + g \otimes x_q+g \otimes gx_q)-[\sum_{|F|=|P|}(-1)^{\frac{|P|(|P|-1)}{2}}\det(P,F) \times\\ 
&\times & (g^{|P|}x_F \otimes x_qx_P + g^{|P|}x_F \otimes x_qgx_P +g^{|P|+1}x_F \otimes x_qx_P -  g^{|P|+1}x_F \otimes x_qgx_P)]\\
&=&\sum_{|F|=|P|}(-1)^{\frac{|P|(|P|-1)}{2}}\det(P,F)((-1)^{|F|}g^{|P|+1}x_F \otimes x_Px_q +(-1)^{|F|} g^{|P|+1}x_F \otimes gx_Px_q+\\
&+&(-1)^{|F|}g^{|P|}x_F \otimes x_Px_q +(-1)^{|F|+1}g^{|P|}x_F \otimes gx_Px_q+(-1)^{|P|+1}g^{|P|}x_F \otimes x_Px_q+\\
&+&(-1)^{|P|} g^{|P|}x_F \otimes gx_Px_q-(-1)^{|P|}g^{|P|+1}x_F \otimes x_Px_q +(-1)^{|P|+1} g^{|P|+1}x_F \otimes gx_Px_q)\\
&=&0\\
\end{eqnarray*}
\end{invisible}
Notice that (\ref{Rswap2}) can be immediately deduced from (\ref{Rswap1}) and (\ref{qtr1}).
In fact
\[\mathcal{R}\Delta(x_q)\mathcal{R}^{-1}=\Delta^{\mathrm{op}}(x_q) \iff \mathcal{R}(x_q \otimes 1 +g \otimes x_q)\mathcal{R}^{-1}=x_q \otimes g + 1 \otimes x_q \overset{(\ref{Rswap1})}{\iff}  \mathcal{R}(g \otimes x_q)\mathcal{R}^{-1}=1 \otimes x_q.\]
\end{proof}

\begin{lemma}\label{lemma3}
  The element $\hat{\chi}$ as in \eqref{eq:hat} satisfies \eqref{cqtr2} and \eqref{cqtr3}.
\end{lemma}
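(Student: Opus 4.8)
The plan is to verify the two axioms \eqref{cqtr2} and \eqref{cqtr3} for $\hat{\chi}=\sum_{p,q=1}^n\gamma_{pq}gx_p\otimes x_q$ by a direct computation in the basis, using Lemma \ref{Rswap} to evaluate the conjugations by $\Rr$. Since the coproduct is completely explicit on $gx_p$ and $x_q$, both sides of each axiom can be written out termwise, and the whole verification collapses onto two conjugation identities for $\Rr$.

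For \eqref{cqtr2} I would first expand the left-hand side using $\Delta(x_q)=x_q\otimes 1+g\otimes x_q$, obtaining $(\mathrm{Id}\otimes\Delta)(\hat{\chi})=\sum_{p,q}\gamma_{pq}(gx_p\otimes x_q\otimes 1+gx_p\otimes g\otimes x_q)$. On the right-hand side the term $\hat{\chi}_{12}=\sum_{p,q}\gamma_{pq}gx_p\otimes x_q\otimes 1$ already accounts for the first summand, so the axiom reduces to proving $\Rr_{12}^{-1}\hat{\chi}_{13}\Rr_{12}=\sum_{p,q}\gamma_{pq}gx_p\otimes g\otimes x_q$. Because $\hat{\chi}_{13}$ carries $1$ in the second leg, the conjugation by $\Rr_{12}$ only affects the first two legs, so it suffices to establish the key identity $\Rr^{-1}(gx_p\otimes 1)\Rr=gx_p\otimes g$. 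This is the one genuinely nontrivial point: writing $gx_p\otimes 1=(g\otimes g)(x_p\otimes g)$, which is valid since $g^2=1$, and then applying \eqref{Rswap1} followed by \eqref{Rswap0} gives $(gx_p\otimes 1)\Rr=(g\otimes g)(x_p\otimes g)\Rr=(g\otimes g)\Rr(x_p\otimes 1)=\Rr(g\otimes g)(x_p\otimes 1)=\Rr(gx_p\otimes g)$, as required.

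For \eqref{cqtr3} the argument is the symmetric mirror image. I would compute $\Delta(gx_p)=(g\otimes g)\Delta(x_p)=gx_p\otimes g+1\otimes gx_p$, whence $(\Delta\otimes\mathrm{Id})(\hat{\chi})=\sum_{p,q}\gamma_{pq}(gx_p\otimes g\otimes x_q+1\otimes gx_p\otimes x_q)$, and observe that the summand $1\otimes gx_p\otimes x_q$ is exactly $\hat{\chi}_{23}$. Thus the axiom reduces to $\Rr_{23}^{-1}\hat{\chi}_{13}\Rr_{23}=\sum_{p,q}\gamma_{pq}gx_p\otimes g\otimes x_q$; since $\hat{\chi}_{13}$ carries $1$ in the second leg and $x_q$ in the third, the conjugation by $\Rr_{23}$ only touches the last two legs, so everything reduces to $\Rr^{-1}(1\otimes x_q)\Rr=g\otimes x_q$. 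This last identity is immediate, being just a restatement of \eqref{Rswap2}.

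In summary, the only step requiring any work is the first conjugation identity $\Rr^{-1}(gx_p\otimes 1)\Rr=gx_p\otimes g$, which is precisely where the relation $g^2=1$ together with both \eqref{Rswap0} and \eqref{Rswap1} enter; the companion identity for \eqref{cqtr3} is a direct quotation of \eqref{Rswap2}. Everything else is bookkeeping in the explicit basis, so I do not anticipate any real obstacle once Lemma \ref{Rswap} is available. Note also that, exactly as in the proof of Proposition \ref{prop:chiAC2xC2}, this verification never uses the explicit shape of $\Rr$, only the swap relations, so it holds for every (quasi)triangular structure $R_A$ of $E(n)$.
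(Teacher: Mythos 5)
Your proof is correct and follows essentially the same route as the paper's: both expand the two sides of \eqref{cqtr2} and \eqref{cqtr3} using the explicit coproducts, cancel the $\hat{\chi}_{12}$ (resp. $\hat{\chi}_{23}$) term, and reduce the axioms to the conjugation identities $\mathcal{R}(gx_p\otimes g)=(gx_p\otimes 1)\mathcal{R}$ and $\mathcal{R}(g\otimes x_q)=(1\otimes x_q)\mathcal{R}$, which follow from \eqref{Rswap0}, \eqref{Rswap1} and \eqref{Rswap2} of Lemma \ref{Rswap}. The only difference is cosmetic: you spell out the derivation of the first identity via $gx_p\otimes 1=(g\otimes g)(x_p\otimes g)$, a step the paper leaves implicit when it invokes \eqref{Rswap0} and \eqref{Rswap1}.
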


\begin{proof}
Let us write $\Rr^{-1}=\sum \overline{\Rr}^i \otimes \overline{\Rr}_i$. An element $\hat{\chi}=\sum_{p,q=1}^n \gamma_{pq}gx_p \otimes x_q$ satisfies \eqref{cqtr2} if and only if
\[
\sum_{p,q=1}^n \gamma_{pq}[gx_p \otimes g \otimes x_q+ gx_p \otimes x_q\otimes 1]=\sum_{p,q=1}^n \gamma_{pq}  gx_p \otimes x_q\otimes 1+\sum_{p,q=1}^n \gamma_{pq} \overline{\mathcal{R}}^{i}gx_p\mathcal{R}^j \otimes\overline{\mathcal{R}}_i\mathcal{R}_j\otimes x_q,
\]
i.e., by linear independence, if and only if
\[
\sum_{p=1}^n \gamma_{pq}[gx_p \otimes g-\overline{\mathcal{R}}^{i}gx_p\mathcal{R}^j \otimes\overline{\mathcal{R}}_{i}\mathcal{R}_j]=0,\ \text{for each}\ q=1, \ldots, n,
\]
which is equivalent, by multiplying by $\Rr$ on the left, to 
\[
\sum_{p=1}^n \gamma_{pq} [\mathcal{R}(gx_{p} \otimes g) - (gx_{p} \otimes 1)\mathcal{R}]=0,\ \text{for each}\ q=1, \ldots, n.
\]
Thanks to \eqref{Rswap0} and \eqref{Rswap1} of Lemma~\ref{Rswap} we can conclude that \eqref{cqtr2} is always satisfied. Similarly, one can show that $\hat{\chi}$ satisfies \eqref{cqtr3}, using \eqref{Rswap2} of Lemma~\ref{Rswap}. 
\begin{invisible}
    An element $\hat{\chi}=\sum_{p,q=1}^n \gamma_{pq}gx_p \otimes x_q$ satisfies \eqref{cqtr3} if and only if
\[
\sum_{p,q=1}^n \gamma_{pq}[gx_p \otimes g \otimes x_q+1_H \otimes gx_p \otimes x_q]=\sum_{p,q=1}^n \gamma_{pq} 1_H \otimes gx_p \otimes x_q+\sum_{p,q=1}^n \gamma_{pq} gx_p \otimes\overline{\mathcal{R}}^{i}\mathcal{R}^j \otimes\overline{\mathcal{R}}_{i}x_q\mathcal{R}_j,
\]
i.e., by linear independence, if and only if
\[
\sum_{q=1}^n \gamma_{pq}[g \otimes x_q-\overline{\mathcal{R}}^{i}\mathcal{R}^j \otimes\overline{\mathcal{R}}_{i}x_q\mathcal{R}_j]=0,\ \text{for each}\ p=1, \ldots, n,
\]
which is equivalent, by multiplying by $\Rr$ on the left, to 
\[
\sum_{q=1}^n \gamma_{pq} [\mathcal{R}(g \otimes x_q) - (1_H \otimes x_q)\mathcal{R}]=0,\ \text{for each}\ p=1, \ldots, n.
\]
Thanks to \eqref{Rswap2} of Lemma~\ref{Rswap} we can conclude that \eqref{cqtr3} is always satisfied.
\end{invisible}\qedhere
\end{proof}

We also have the following result.

\begin{lemma}
    The element $\tilde{\chi}$ defined as in \eqref{eq:tilde}  satisfies \eqref{cqtr2} (and \eqref{cqtr3}) if and only if it is zero.
\end{lemma}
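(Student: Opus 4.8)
The plan is to prove both implications, the nontrivial one being that \eqref{cqtr2} forces $\tilde{\chi}=0$. If $\tilde{\chi}=0$ then \eqref{cqtr2} and \eqref{cqtr3} hold trivially, so I assume $\tilde{\chi}\neq 0$ satisfies \eqref{cqtr2}, which I rewrite as
\[
(\mathrm{Id}\otimes\Delta)(\tilde{\chi})-\tilde{\chi}_{12}=\Rr^{-1}_{12}\tilde{\chi}_{13}\Rr_{12}.
\]
I grade $E(n)^{\otimes 3}$ by total degree in the generators $x_1,\dots,x_n$, assigning degree $1$ to each $x_i$ and degree $0$ to $g$. Every summand of $\tilde{\chi}$ indexed by $P$ (as in Remark \ref{rmk:chi-nuova}) is homogeneous of degree $|P|$, so $\tilde{\chi}$ splits into homogeneous pieces of even degrees $\geq 4$, and I will compare the two sides of the displayed identity degree by degree.

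First I would control the right-hand side. Iterating \eqref{Rswap1} yields $\Rr(x_{i_1}\cdots x_{i_k}\otimes 1)=(x_{i_1}\cdots x_{i_k}\otimes g^{k})\Rr$; in particular $\Rr$ commutes with $x_Q\otimes 1$ whenever $|Q|$ is even. More importantly for the grading, write $\Rr=\Rr_0+\Rr_{>0}$, where $\Rr_0=\tfrac12(1\otimes 1+1\otimes g+g\otimes 1-g\otimes g)$ is the group-like part of \eqref{RstructureE(n)} and $\Rr_{>0}$ collects the summands carrying positive $x$-degree in each leg. Since $S$ preserves $x$-degree, the group-like part of $\Rr^{-1}=(S\otimes\mathrm{Id})(\Rr)$ is again $\Rr_0$. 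A nonzero summand of $\bar{\Rr}^{(1)}a\Rr^{(1)}\otimes\bar{\Rr}^{(2)}\Rr^{(2)}$ has total degree $\deg(a)$ plus an even nonnegative contribution coming from the factors of $\Rr_{>0}$ actually used, so conjugation by $\Rr$ never lowers the total degree and raises it strictly as soon as a factor of $\Rr_{>0}$ occurs. Hence the minimal-degree part of $\Rr^{-1}_{12}\tilde{\chi}_{13}\Rr_{12}$ is obtained by conjugating the minimal-degree part of $\tilde{\chi}_{13}$ by $\Rr_0$ alone; as $\Rr_0\in\Bbbk C_2\otimes\Bbbk C_2$ is group-like, this keeps the second tensorand inside $\Bbbk C_2$. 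Thus in the minimal-degree component of the right-hand side the middle tensorand always has $x$-degree $0$.

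Next I would read off the minimal-degree part of the left-hand side. Let $d\geq 4$ be the least $|P|$ with $\xi^0_P\neq 0$. Expanding $(\mathrm{Id}\otimes\Delta)(\tilde{\chi})$ by \eqref{deltagx} and subtracting $\tilde{\chi}_{12}$ (which removes precisely the $F'=\emptyset$ summands), the degree-$d$ part equals
\[
-\sum_{|P|=d}\xi^0_P\sum_{\emptyset\neq F'\subseteq F\subsetneq P}(-1)^{S(F,P)+S(F',F)}\,g^{|F|}x_{P\setminus F}\otimes g^{|F'|}x_{F\setminus F'}\otimes x_{F'}.
\]
Among these, the summands with $\emptyset\neq F'\subsetneq F$ (which exist because $d\geq 4$) have middle tensorand $g^{|F'|}x_{F\setminus F'}$ of positive $x$-degree, so by the previous paragraph they cannot be matched by any term of the right-hand side. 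Comparing the two sides in the degree-$d$, middle-$x$-degree-$\geq 1$ component and using that $\{g^{a}x_A\otimes g^{b}x_B\otimes x_C\}$ is a basis, these left-hand summands must vanish identically.

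Finally I would extract the coefficients. For fixed $P$ the assignment $(F,F')\mapsto(P\setminus F,\,F\setminus F',\,F')$ is injective onto ordered partitions of $P$ into three nonempty blocks (one recovers $F'=C$, $F=B\cup C$), and distinct $P$ produce basis vectors whose combined index set is exactly $P$, so no cancellation between different $(P,F,F')$ is possible. This forces $\xi^0_P=0$ for every $|P|=d$, contradicting the minimality of $d$; hence all $\xi^0_P$ vanish and $\tilde{\chi}=0$. The companion statement for \eqref{cqtr3} follows by the symmetric argument based on \eqref{Rswap2}. The main obstacle is precisely the degree bookkeeping for $\Rr^{-1}_{12}\tilde{\chi}_{13}\Rr_{12}$: one must rule out that some summand of $\Rr_{>0}$ conspires to return to minimal total degree with a nontrivial middle leg, and this is exactly what the iterated form of \eqref{Rswap1} together with the strict degree-raising of $\Rr_{>0}$ guarantees.
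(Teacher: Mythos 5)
Your proof is correct and follows essentially the same route as the paper's: both expand $(\mathrm{Id}\otimes\Delta)(\tilde{\chi})$, identify the $F'=\emptyset$ summands with $\tilde{\chi}_{12}$, and conclude $\xi^0_P=0$ by showing that the summands with $\emptyset\neq F'\subsetneq F$ (which exist since $|P|\geq 4$) cannot be matched by any term of $\Rr^{-1}_{12}\tilde{\chi}_{13}\Rr_{12}$. Your explicit $x$-degree grading and minimal-degree comparison actually supply the justification that the paper leaves implicit when it asserts that the conjugated term can only be compared with the $F'=F$ summands, since conjugation by the positive-degree part of $\Rr$ does create terms with nontrivial middle leg, just in strictly higher total degree.
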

\begin{proof}
We compute
\[
\begin{split}(\mathrm{Id}\otimes\Delta )(\tilde{\chi})&=
-\sum_{|P|\geq 4\  \text{even}}\xi^0_P\sum_{\emptyset \neq F\subsetneq P}(-1)^{S(F,P)}g^{|F|}x_{P\setminus F}\otimes \sum_{F'\subseteq F}(-1)^{S(F',F)}g^{|F'|}x_{F\setminus F'}\otimes x_{F'}.
\end{split}
\]
If we consider $F'=\emptyset$ in the previous term we obtain $\tilde{\chi}_{12}$. We observe that $\Rr^{-1}_{12}\tilde{\chi}_{13}\Rr_{12}$ can be compared just with elements in the previous summation such that $F'=F$. Since $|P|\geq 4$ we are able to find an $F'$ such that $\emptyset\not=F'\subsetneq F\subsetneq P$, hence by linear independence $\xi^{0}_{P}=0$ for all $|P|\geq 4$, so $\tilde{\chi}=0$. 
\end{proof}

Since, by Lemma \ref{lemma3}, $\hat{\chi}$ satisfies \eqref{cqtr2} and \eqref{cqtr3}, if $\chi$ satisfies \eqref{cqtr2} and \eqref{cqtr3} we must have $\chi=\hat{\chi}$. Thus, $\chi=\sum_{p,q=1}^{n}{\gamma_{pq}gx_{p}\otimes x_{q}}$ makes $(E(n),R_{A})$ a pre-Cartier quasitriangular Hopf algebra.

Finally, we study axiom \eqref{eq:ctr2}.

\begin{lemma}\label{lemmaCartier}
    An element $\chi=\sum_{p,q=1}^{n}{\gamma_{pq}gx_{p}\otimes x_{q}}$ satisfies \eqref{eq:ctr2} 
    if and only if $\gamma_{pq}=-\gamma_{qp}$ for any $p,q=1,\ldots,n$.
\end{lemma}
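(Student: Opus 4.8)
The plan is to establish the two asserted equivalences separately, in each case reducing the condition in question to the antisymmetry relation $\gamma_{pq}=-\gamma_{qp}$ by a short direct computation; together these give the full chain. Throughout, the only inputs needed are the antipode formula \eqref{antipode}, the commutation relation $gx_i=-x_ig$, the swap identities of Lemma~\ref{Rswap}, the invertibility of $\mathcal{R}$, and the linear independence of the relevant basis elements of $E(n)\otimes E(n)$.

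First I would treat the equivalence with $\tau(S\otimes S)(\chi)=\chi$. From \eqref{antipode} one has $S(x_q)=-gx_q$ and $S(gx_p)=x_p$ (directly, $S(gx_p)=S(x_p)S(g)=-gx_pg=x_p$ using $x_pg=-gx_p$ and $g^2=1$), so that $(S\otimes S)(gx_p\otimes x_q)=-\,x_p\otimes gx_q$ and hence
\[
\tau(S\otimes S)(\chi)=-\sum_{p,q=1}^{n}\gamma_{pq}\,gx_q\otimes x_p=-\sum_{p,q=1}^{n}\gamma_{qp}\,gx_p\otimes x_q
\]
after relabelling the summation indices. Since the elements $gx_p\otimes x_q$ are linearly independent in $E(n)\otimes E(n)$, equating this with $\chi=\sum_{p,q}\gamma_{pq}gx_p\otimes x_q$ is equivalent to $\gamma_{pq}=-\gamma_{qp}$ for all $p,q$.

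Next I would handle the Cartier condition \eqref{eq:ctr2}, namely $\mathcal{R}\chi=\chi^{\mathrm{op}}\mathcal{R}$. The key step is to push $\mathcal{R}$ through $\chi$ by means of Lemma~\ref{Rswap}: factoring $gx_p\otimes x_q=(g\otimes x_q)(x_p\otimes 1)$ and applying first \eqref{Rswap2} and then \eqref{Rswap1} yields
\[
\mathcal{R}(gx_p\otimes x_q)=(1\otimes x_q)\mathcal{R}(x_p\otimes 1)=(1\otimes x_q)(x_p\otimes g)\mathcal{R}=-\,(x_p\otimes gx_q)\mathcal{R},
\]
where the final sign is produced by $x_qg=-gx_q$. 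Consequently $\mathcal{R}\chi=-\sum_{p,q}\gamma_{pq}(x_p\otimes gx_q)\mathcal{R}$, while $\chi^{\mathrm{op}}=\sum_{p,q}\gamma_{pq}x_q\otimes gx_p$ gives $\chi^{\mathrm{op}}\mathcal{R}=\sum_{p,q}\gamma_{qp}(x_p\otimes gx_q)\mathcal{R}$ after relabelling.

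Finally, since $\mathcal{R}$ is invertible I may cancel it on the right, and since the elements $x_p\otimes gx_q$ are linearly independent, the equality $\mathcal{R}\chi=\chi^{\mathrm{op}}\mathcal{R}$ is equivalent to $-\gamma_{pq}=\gamma_{qp}$ for all $p,q$, that is, again to $\gamma_{pq}=-\gamma_{qp}$. This closes both equivalences. I do not anticipate a genuine obstacle: the argument is a direct manipulation, and the only delicate point is keeping the signs arising from $gx_i=-x_ig$ and from the antipode consistent throughout the two computations.
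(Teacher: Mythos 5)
Your proof is correct and follows essentially the same route as the paper: both rest on the identity $\mathcal{R}(gx_p\otimes x_q)=-(x_p\otimes gx_q)\mathcal{R}$ obtained from Lemma~\ref{Rswap}, the antipode formula, invertibility of $\mathcal{R}$, and linear independence. The only difference is organizational — the paper packages the computation as $\mathcal{R}\chi=(S\otimes S)(\chi)\mathcal{R}$ and reads off the whole chain of equivalences at once, whereas you verify the two equivalences with antisymmetry separately — but the underlying calculations are identical.
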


\begin{proof}
By Proposition \ref{cor:SSRchi}, $\chi$ satisfies \eqref{eq:ctr2} if and only if \eqref{eq:HopfCartier} holds.
We compute
\[
\tau(S \otimes S)(\chi)=-\sum_{p,q=1}^{n}{\gamma_{pq}gx_{q}\otimes x_{p}}.
\]
We obtain that \eqref{eq:HopfCartier} is equivalent to
\[
-\sum_{p,q=1}^n \gamma_{pq}gx_q\otimes x_{p}=\sum_{p,q=1}^n \gamma_{qp}gx_q\otimes x_{p},
\]
hence to $\gamma_{pq}=-\gamma_{qp}$ for any $p,q=1, \ldots, n$, since the set $\lbrace gx_q\otimes x_{p} \ | \ p,q=1, \ldots, n \rbrace$ is linearly independent. 
\end{proof}

The next theorem summarizes the classification of infinitesimal $\Rr$-matrices of $E(n)$ that we have obtained. Note that each infinitesimal $\Rr$-matrix for $(E(n),R_A)$ is independent of the choice of the quasitriangular structure $R_A$ (given as in \eqref{tristruct}).

\begin{theorem}\label{prop:class-chi}
The Hopf algebra $E(n)$, endowed with any quasitriangular structure $R_A$, admits an exhaustive $n^2$-dimensional space of infinitesimal $\Rr$-matrices given by
\begin{equation}\label{eq:class-chi}
\chi=\sum_{p,q=1}^n \gamma_{pq}gx_p \otimes x_q.\end{equation}
Each $\chi$ can be associated to a square matrix $\Gamma=(\gamma_{pq})_{p,q=1, \ldots, n} \in M_n(\Bbbk)$.
The $E(n)$'s are Cartier only via infinitesimal $\Rr$-matrices $\chi$ whose associated matrices $\Gamma$ are anti-symmetric.
\end{theorem}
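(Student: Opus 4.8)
The plan is to assemble the theorem from the structural results already established, treating the classification and the Cartier criterion separately. For the classification, I would start from the fact that every infinitesimal $\Rr$-matrix $\chi$ is in particular a Hochschild $2$-cocycle by \cite[Theorem 2.21]{ABSW}, so Theorem \ref{prop:2cocycle} lets me write $\chi=b^{1}(\xi)+\sum_{1\leq i\leq l\leq n}\alpha_{i,l}gx_i\otimes x_l$. I would then impose axiom \eqref{cqtr1}: as in the computations leading to Corollary \ref{cor:ax1} (using the centralizer descriptions of Lemmas \ref{Centg} and \ref{centx} for $\Delta(g)$ and $\Delta(x_i)$), the condition $\chi\Delta(\cdot)=\Delta(\cdot)\chi$ on the generators forces $b^{1}(\xi)$ into the reduced shape \eqref{eq:b1xii}, so that $\chi$ takes the form \eqref{eq:chiax1}. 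Following Remark \ref{rmk:chi-nuova} I would split $\chi=\tilde\chi+\hat\chi$, where $\hat\chi=\sum_{p,q=1}^n\gamma_{pq}gx_p\otimes x_q$ and $\tilde\chi$ collects the higher terms.

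The key step is to exploit that the remaining axioms \eqref{cqtr2} and \eqref{cqtr3} are \emph{linear} in $\chi$. Grading $E(n)$ by the number of $x$-factors (the defining relations are homogeneous for this grading), the two summands lie in complementary homogeneous components of $E(n)\otimes E(n)$: the terms of $\tilde\chi$ have total $x$-degree $\geq 4$, whereas those of $\hat\chi$ have total $x$-degree exactly $2$, so the decomposition is a genuine direct sum. By Lemma \ref{lemma3} the summand $\hat\chi$ always satisfies \eqref{cqtr2} and \eqref{cqtr3}; hence, if $\chi$ satisfies them, then by linearity so does $\tilde\chi=\chi-\hat\chi$. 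The (unlabelled) lemma stating that $\tilde\chi$ can satisfy \eqref{cqtr2} only when it is trivial then forces $\tilde\chi=0$, and I conclude $\chi=\hat\chi=\sum_{p,q=1}^n\gamma_{pq}gx_p\otimes x_q$. For exhaustiveness I would run the converse: each such $\hat\chi$ satisfies \eqref{cqtr1} by Corollary \ref{cor:ax1} (it is precisely the case of \eqref{eq:chiax1} with $\tilde\chi=0$) and \eqref{cqtr2}, \eqref{cqtr3} by Lemma \ref{lemma3}, so it is genuinely an infinitesimal $\Rr$-matrix. The dimension count is immediate, since the $n^2$ elements $gx_p\otimes x_q$ belong to the standard basis of $E(n)\otimes E(n)$ and are therefore linearly independent; thus $\chi\mapsto\Gamma=(\gamma_{pq})$ is a linear isomorphism onto $M_n(\Bbbk)$, giving the $n^2$-dimensional space.

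For the Cartier part, I would recall that $(E(n),R_A,\chi)$ is Cartier precisely when, in addition, $\chi$ satisfies \eqref{eq:ctr2}, i.e. $R_A\chi=\chi^{\mathrm{op}}R_A$. Since the pre-Cartier structures are exactly the $\chi=\sum_{p,q}\gamma_{pq}gx_p\otimes x_q$ just classified, Lemma \ref{lemmaCartier} applies verbatim and identifies \eqref{eq:ctr2} with $\gamma_{pq}=-\gamma_{qp}$ for all $p,q$, that is, with $\Gamma$ being anti-symmetric. All of these equivalences are independent of the choice of $A$, because the swap relations of Lemma \ref{Rswap} (on which Lemmas \ref{lemma3} and \ref{lemmaCartier} rest) hold for every quasitriangular structure $R_A$, which is exactly what licenses the phrases ``any quasitriangular structure $R_A$'' and the $A$-independence asserted in the statement.

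I would expect the only genuine subtlety in this assembly to be the linearity argument of the second paragraph: one must use that $\hat\chi$ solves \eqref{cqtr2} and \eqref{cqtr3} \emph{unconditionally} (Lemma \ref{lemma3}) in order to transfer these axioms from $\chi$ to $\tilde\chi$, and the direct-sum splitting by $x$-degree is what guarantees that no solution is lost through cancellation between the two summands. All the heavy computation — the centralizer analyses of Lemmas \ref{Centg} and \ref{centx}, the $2$-cocycle classification of Theorem \ref{prop:2cocycle}, and the vanishing of $\tilde\chi$ — is already isolated in the preceding results, so the theorem itself is essentially a bookkeeping synthesis of them.
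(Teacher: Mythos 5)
Your proposal is correct and takes essentially the same route as the paper's own proof: the paper likewise starts from the Hochschild $2$-cocycle classification (Theorem \ref{prop:2cocycle}), reduces via axiom \eqref{cqtr1} to the decomposition $\chi=\tilde{\chi}+\hat{\chi}$ of Remark \ref{rmk:chi-nuova}, concludes $\chi=\hat{\chi}$ from Lemma \ref{lemma3} and the unlabelled lemma on $\tilde{\chi}$ (using exactly the linearity transfer you spell out, which the paper leaves implicit), and obtains the anti-symmetry criterion from Lemma \ref{lemmaCartier}. Your explicit $x$-degree direct-sum remark, the exhaustiveness converse, and the dimension count are minor elaborations of what the paper's summary-style proof takes for granted, not a different argument.
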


\begin{remark}
    In case of the Sweedler's Hopf algebra $E(1)$, the only infinitesimal $\Rr$-matrices are $\gamma gx_{1}\otimes x_{1}$, for $\gamma\in\Bbbk$, and there is no non-zero Cartier structure, cf. \cite[Proposition 2.7 and Remark 2.8]{ABSW}.
\end{remark}

\begin{remark}
    By Corollary \ref{cor:infinitesimalcop}, the fact that $(E(n),R_{A},\chi)$ is a (pre-)Cartier quasitriangular Hopf algebra is equivalent to the fact that $(E(n)^{\mathrm{cop}},R_{A}^{\mathrm{op}},\chi')$ is a (pre-)Cartier quasitriangular Hopf algebra, where $\chi':=-\sum_{p,q=1}^{n}{\gamma_{pq}x_{p}\ot gx_{q}}$.
    We also observe that, given an arbitrary quasitriangular bialgebra $(H,\Rr)$, the bialgebra $E(n)\ot H$ is pre-Cartier with quasitriangular structure $(\mathrm{Id}_{E(n)}\ot\tau_{E(n),H}\ot\mathrm{Id}_{H})(R_{A}\ot\Rr)$ and pre-Cartier structure $\chi^{i}\ot1_{H}\ot\chi_{i}\ot1_{H}$ by \cite[Proposition 2.15]{ABSW}.
\end{remark}

\begin{remark}
The following observation was made by M. Faitg, A. Gainutdinov and C. Schweigert, after our paper appeared online. In case $\Bbbk=\mathbb{C}$, the dimension of the space of infinitesimal braidings tangent to the braiding $\sigma_{E(n)}^{R_{A}}$ is computed in \cite[Proposition 5.5 and Remark 5.8]{FGS}, and the basis is obtained in \cite[Proposition 5.7]{FGS} only in case $A=\mathrm{Id}$ (and $\Bbbk=\mathbb{C}$). The results obtained are coherent with ours, remembering \cite[Remark 5.2]{FGS}. We point out that, in Theorem \ref{prop:class-chi}, we obtained the dimension and the explicit basis of the space of infinitesimal braidings for any quasitriangular structure $R_{A}$ and, moreover, the formula \eqref{eq:class-chi} is independent from $A$. Furthermore, we do not require $\Bbbk$ to have any particular property, apart from $\mathrm{char}\left( \Bbbk \right) \neq 2$.
\end{remark}

In Section \ref{sec:theory} we have observed that for any quasitriangular Cartier Hopf algebra $\left( H,\mathcal{R},\chi \right)$, if $\mathrm{char} (\Bbbk) \neq 2$, the infinitesimal $\Rr$-matrix $\chi$ is a $2$-coboundary in the  cohomology for coalgebras recalled in Section \ref{sub:Hoch}. Now, for $E(n)$ we also prove that any infinitesimal $\Rr$-matrix which is a $2$-coboundary satisfies the Cartier condition.

\begin{proposition}\label{prop:2cob}
	Let $\chi$ be an infinitesimal $\Rr$-matrix for $(E(n),R_{A})$. Then, $(E(n), R_A, \chi)$ is Cartier if and only if $\chi$ is a $2$-coboundary.    
\end{proposition}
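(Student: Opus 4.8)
The plan is to reduce the statement to a purely linear-algebraic condition on the matrix $\Gamma$ associated to $\chi$, by combining three results already established: the classification in Theorem \ref{prop:class-chi}, the splitting $\mathrm{Z}^2(E(n),\Bbbk)=\mathrm{B}^2(E(n),\Bbbk)\oplus \mathrm{I}(E(n),\Bbbk)$ of Theorem \ref{prop:2cocycle}, and the Cartier criterion of Lemma \ref{lemmaCartier}. Since $\chi$ is an infinitesimal $\Rr$-matrix, Theorem \ref{prop:class-chi} lets me write $\chi=\sum_{p,q=1}^n \gamma_{pq}\, gx_p\otimes x_q$, and Lemma \ref{lemmaCartier} tells me that $(E(n),R_A,\chi)$ is Cartier (i.e.\ that \eqref{eq:ctr2} holds) exactly when $\Gamma=(\gamma_{pq})$ is anti-symmetric. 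So the whole proposition amounts to showing that $\chi$ is a Hochschild $2$-coboundary if and only if $\Gamma$ is anti-symmetric.

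First I would project $\chi$ onto its $\mathrm{I}(E(n),\Bbbk)$-component. The key tool is the identity $gx_p\otimes x_q-gx_q\otimes x_p=b^1(x_px_q)$ from Lemma \ref{lem:cocyclenotcob}, which lets me replace every summand of $\chi$ with $p>q$ by a summand in $\mathrm{I}(E(n),\Bbbk)$ plus a coboundary. After splitting the sum into its diagonal, upper ($p<q$) and lower ($p>q$) parts and reindexing the lower part, I obtain
\begin{equation*}
\chi=\sum_{p=1}^n \gamma_{pp}\, gx_p\otimes x_p+\sum_{1\leq p<q\leq n}(\gamma_{pq}+\gamma_{qp})\, gx_p\otimes x_q\;-\;\sum_{p>q}\gamma_{pq}\, b^1(x_qx_p).
\end{equation*}
The first two sums lie in $\mathrm{I}(E(n),\Bbbk)=\langle gx_i\otimes x_l\mid l\geq i\rangle$ and the last lies in $\mathrm{B}^2(E(n),\Bbbk)$, so by the uniqueness of the decomposition in Theorem \ref{prop:2cocycle} this is precisely the splitting of $\chi$ into its coboundary and $\mathrm{I}$-parts. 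In particular $\chi\in\mathrm{B}^2(E(n),\Bbbk)$ if and only if its $\mathrm{I}$-component vanishes.

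Then, because the elements $gx_i\otimes x_l$ with $i\leq l$ are linearly independent in $E(n)\otimes E(n)$, the $\mathrm{I}$-component vanishes iff $\gamma_{pp}=0$ for all $p$ and $\gamma_{pq}+\gamma_{qp}=0$ for all $p<q$. Since $\mathrm{char}(\Bbbk)\neq 2$, the diagonal conditions $\gamma_{pp}=0$ are exactly the $p=q$ instances of $\gamma_{pq}=-\gamma_{qp}$, so these conditions together say precisely that $\Gamma$ is anti-symmetric. By Lemma \ref{lemmaCartier} this is equivalent to $(E(n),R_A,\chi)$ being Cartier, which finishes the argument; note that the quasitriangular structure $R_A$ plays no explicit role. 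The only points requiring care are the sign bookkeeping in the reindexing step and in the identity from Lemma \ref{lem:cocyclenotcob}, together with the use of $\mathrm{char}(\Bbbk)\neq 2$ to rule out nonzero diagonal entries.
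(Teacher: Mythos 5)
Your proof is correct, and it takes a genuinely different route from the paper's. The paper proves both directions through the antipode-symmetry characterization in Lemma \ref{lemmaCartier} ($\chi$ satisfies \eqref{eq:ctr2} iff $\tau(S\otimes S)(\chi)=\chi$) combined with the external result \cite[Proposition 2.29]{ABSW}: for the forward direction, Cartier gives $\tau(S\otimes S)(\chi)=\chi$, which by that proposition makes $\chi$ a coboundary when $\mathrm{char}(\Bbbk)\neq 2$; for the converse, the paper determines the explicit shape of any $\alpha$ with $b^1(\alpha)=\chi$ (forcing $\alpha=\sum_{|P|=2}\alpha^0_P x_P$), checks $S(\alpha)=\alpha$, and then uses the identity $\chi+\tau(S\otimes S)(\chi)=b^1\bigl(m(S\otimes\mathrm{Id})(\chi)\bigr)=2\chi$ to recover the symmetry condition. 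You instead bypass \cite[Proposition 2.29]{ABSW} entirely and argue inside the paper's own $E(n)$-specific results: the identity $b^1(x_px_q)=gx_p\otimes x_q-gx_q\otimes x_p$ from Lemma \ref{lem:cocyclenotcob}, the direct-sum decomposition $\mathrm{Z}^2=\mathrm{B}^2\oplus\mathrm{I}$ of Theorem \ref{prop:2cocycle}, and the matrix form of Lemma \ref{lemmaCartier}. Your reindexing and signs check out: for $p>q$ one has $gx_p\otimes x_q=gx_q\otimes x_p-b^1(x_qx_p)$, yielding exactly the displayed splitting, whose $\mathrm{I}$-component is forced to be the unique one by directness of the sum, so $\chi\in\mathrm{B}^2$ iff $\gamma_{pp}=0$ and $\gamma_{pq}+\gamma_{qp}=0$ for $p<q$, which (using $\mathrm{char}(\Bbbk)\neq 2$ on the diagonal) is precisely anti-symmetry of $\Gamma$. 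What each approach buys: yours is shorter, self-contained, and makes the coboundary/Cartier equivalence a transparent statement about the matrix $\Gamma$; the paper's argument, while computationally heavier in the converse direction, runs through the conceptual link between Cartier structures, $(S\otimes S)$-invariance, and coboundaries that underlies the general result \cite[Theorem 2.31]{ABSW} for arbitrary triangular Hopf algebras, and so explains \emph{why} the equivalence holds beyond the $E(n)$ case.
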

\begin{proof}
Assume that $(E(n),R_{A})$ is Cartier with infinitesimal $\Rr$-matrix $\chi$. By Proposition \ref{cor:SSRchi}, we have that $\tau(S\otimes S)(\chi)=\chi$, hence $\chi$ is a 2-coboundary by \cite[Proposition 2.29]{ABSW}, since $\mathrm{char}(\Bbbk)\not=2$.

On the other hand, suppose that the infinitesimal $\Rr$-matrix $\chi=\sum_{p,q=1}^n \gamma_{pq}gx_p \otimes x_q$ is a $2$-coboundary, i.e., $\chi=b^1(\alpha)=1\otimes\alpha-\Delta(\alpha)+\alpha\otimes 1$, for an element $\alpha=\sum_{P,j=0}^1\alpha^j_Pg^jx_P$ in $E(n)$ where $P\subseteq \lbrace 1, \ldots, n \rbrace$. Note that, since  $$\Delta(\alpha)=\sum_{
P,j=0}^1\alpha^j_P\Delta(g^jx_P)=\sum_{P,j=0}^1\alpha^j_P\sum_{F\subseteq P}(-1)^{S(F,P)}g^{\vert F\vert+j} x_{P\setminus F}\otimes g^jx_F$$ while $\chi$ is sum of elements with only an $x_{i}$ for each tensorand, we must have $\alpha=\sum_{\underset{|P|=2}{j=0}}^1\alpha^j_Pg^jx_P$. Moreover, $j$ must be 0 in order to recover $1\otimes\alpha$ and $\alpha\otimes1$, so $\alpha=\sum_{|P|=2}{\alpha^0_Px_P}$.

Hence, using \eqref{antipode}, we obtain
\[
S(\alpha)=\sum_{|P|=2}{\alpha^0_PS(x_P)}=\sum_{|P|=2}{\alpha^0_P(-1)^{|P|}g^{|P|}x_P}=\sum_{|P|=2}{\alpha^0_Px_P=\alpha},
\]
so that we can conclude by Corollary \ref{corCartiercob}.
\qedhere
\end{proof}

\begin{remark}
By \eqref{antipode} the Casimir element for the pre-Cartier quasitriangular Hopf algebra $(E(n),R_{A},\chi)$ is given by $\gamma=\sum_{p,q=1}^n\gamma_{pq}S(gx_p)x_q=\sum_{p,q=1}^n\gamma_{pq}x_px_q$.
\end{remark}

\subsubsection{An explicit example: the 8-dimensional unimodular ribbon Hopf algebra $E(2)$}

Here we consider the so-called 8-dimensional unimodular ribbon Hopf algebra $E(2)$, introduced in \cite{Rad}. Recall that $E(2)$ is a $8$-dimensional Hopf algebra given by generators $g,x_1,x_2$, and relations 
\[
g^2=1,\quad x_i^2=0,\quad gx_i=-x_ig,\quad x_ix_j=-x_jx_i,\quad\text{for}\ i,j=1,2.
\]
The Hopf algebra structure is defined by
\[\Delta(g)=g \otimes g, \quad  \Delta(x_i)=x_i \otimes 1 +g \otimes x_i,\quad \varepsilon(g)=1_{\Bbbk},\quad \varepsilon(x_i)=0
\]
for $i=1,2$ and the antipode is given by $S(g)=g$, $S(x_i)=-gx_i$, for $i=1,2$. 
The set $\lbrace g^jx_P \ | \ P\subseteq \lbrace 1, 2\rbrace, j \in \lbrace 0,1 \rbrace \rbrace$ is a basis of $E(2)$. 
The quasitriangular structure \eqref{tristruct} for $E(2)$ is	\begin{equation}\label{eq:R-E2}
		\begin{split}
		\Rr:=&\frac{1}{2}[(1 \otimes 1 + 1 \otimes g + g \otimes 1-g \otimes g)+\alpha(x_1\otimes x_1-x_1\otimes gx_1+gx_1\otimes x_1+gx_1\otimes gx_1)\\
		&+\beta (x_1\otimes x_2-x_1\otimes gx_2+gx_1\otimes x_2+gx_1\otimes gx_2)\\
		&+\gamma (x_2\otimes x_1-x_2\otimes gx_1+gx_2\otimes x_1+gx_2\otimes gx_1)\\
		&+\delta (x_2\otimes x_2-x_2\otimes gx_2+gx_2\otimes x_2+gx_2\otimes gx_2)\\
		&+(\beta\gamma -\alpha\delta) (x_1x_2\otimes x_1x_2+x_1x_2\otimes gx_1x_2+gx_1x_2\otimes x_1x_2 -gx_1x_2\otimes gx_1x_2)],
\end{split}
	\end{equation}
for $\alpha, \beta,\gamma,\delta\in \Bbbk$. This formula was obtained by S. Gelaki in \cite{Ge}.

We report the classification of the $2$-cocycles and infinitesimal $\Rr$-matrices for $E(2)$. \medskip

Let $\chi$ be an arbitrary element in $E(2)\otimes E(2)$. 
By 
Theorem \ref{prop:2cocycle}, 
we have that $\chi$ is a 2-cocycle if and only if 
\[
\begin{split}
   \chi&= b^{1}(\alpha 1+\beta g+\gamma x_{1}+\delta x_{2}+\eta gx_{1}+\zeta gx_{2}+\theta gx_{1}x_{2}+\lambda x_{1}x_{2})\\&+\mu(gx_{1}\otimes x_{1})+\nu(gx_{1}\otimes x_{2})+\xi(gx_{2}\otimes x_{2}) 
\end{split}
\]
for all $\alpha,\beta,\gamma,\delta,\eta,\zeta,\theta,\lambda,\mu,\nu,\xi\in\Bbbk$, i.e., if and only if 
\begin{equation}\label{eq:E(2)-2cocycle'}
\begin{split}
\chi&=\alpha(1\otimes 1)+\beta (1\otimes g+g\otimes 1-g\otimes g)+\gamma (1\otimes x_1-g\otimes x_1)+\delta (1\otimes x_2-g\otimes x_2) 
\\&+\eta (gx_1\otimes 1- gx_1\otimes g)+\zeta (gx_2\otimes 1- gx_2\otimes g)\\&
+\theta(1\otimes gx_1x_2-g\otimes gx_1x_2+x_1\otimes gx_2-x_2\otimes gx_1 +gx_1x_2\otimes 1-gx_1x_2\otimes g)\\&
+\mu(gx_1\otimes x_1)+\nu (gx_1\otimes x_2) 
+\xi(gx_2\otimes x_2). 
\end{split}
\end{equation}
By Theorem \ref{prop:class-chi} the infinitesimal $\Rr$-matrices for $E(2)$ are given by 
\begin{equation}\label{eq:chiE2}
\begin{split}
    \chi=
    \alpha gx_1\otimes x_1+\beta gx_1\otimes x_2+\gamma gx_2\otimes x_1+\delta gx_2\otimes x_2,
\end{split}
\end{equation}
with $\alpha,\beta,\gamma,\delta\in \Bbbk$. Moreover, the infinitesimal $\Rr$-matrices which also satisfy \eqref{eq:ctr2} are of the form $\chi=\kappa(gx_{1}\otimes x_{2}-gx_{2}\otimes x_{1})$, for $\kappa\in\Bbbk$. As shown in Proposition \ref{prop:2cob}, the infinitesimal $\Rr$-matrices which are also 2-coboundaries coincide exactly with those which give $E(2)$ a Cartier structure.

\section{The quantization problem}\label{sect:quant}
  
\noindent In this final section we solve the quantization problem given in \cite[Question 2.10]{ABSW} for $A_{C_{2}^n}$ and for $E(n)$. Let us recall that, given a bialgebra $H$, one can consider the corresponding \textit{trivial topological bialgebra} $H[[\hbar]]$ of formal power series with formal parameter $\hbar$, see e.g. \cite[Section XVI.4, Example 3]{Ka}. The bialgebra structure of $H[[\hbar]]$ is obtained by $\hbar$-linearly extending the bialgebra structure of $H$, where one has to replace the tensor product $\otimes$ with its topological completion $\hat{\otimes}$, so that $(H\otimes H)[[\hbar]]\cong H[[\hbar]]\hat{\otimes}H[[\hbar]]$. Throughout this section the tensor product of $H[[\hbar]]$ is $\hat{\otimes}$, which we just denote as $\otimes$ by abuse of notation.

\noindent Recall that in \cite[Proposition 2.12]{ABSW} it is shown that, given a pre-Cartier structure $(H,\Rr,\chi)$ on the Sweedler's Hopf algebra $H$, then $\tilde{\Rr}:=\Rr\mathrm{exp}(\hbar\chi) $ is a quasitriangular structure on the trivial topological bialgebra $H[[\hbar]]$. 

First, we prove the following general result.

\begin{proposition}\label{prop:quant}
Let $(H,\Rr,\chi)$ be a pre-Cartier quasitriangular bialgebra where $\chi^{n+1}=0$, for some $1\leq n\in\mathbb{N}$, and the following equations are satisfied:
\begin{enumerate}
    \item[$1)$] $\chi_{12}(\Rr^{-1}_{12}\chi_{13}\Rr_{12})=(\Rr^{-1}_{12}\chi_{13}\Rr_{12})\chi_{12}$, \medskip
    \item[$2)$] $\chi_{23}(\Rr^{-1}_{23}\chi_{13}\Rr_{23})=(\Rr^{-1}_{23}\chi_{13}\Rr_{23})\chi_{23}$.
\end{enumerate}
Then, there is a quasitriangular structure on $H[[\hbar]]$ given by $\tilde{\Rr}:=\Rr\mathrm{exp}(\hbar\chi)$.  
\end{proposition}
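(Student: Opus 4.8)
The plan is to verify directly that $\tilde{\Rr}=\Rr\,\mathrm{exp}(\hbar\chi)$ satisfies the three defining axioms \eqref{qtr1}, \eqref{qtr2}, \eqref{qtr3} of a quasitriangular structure on $H[[\hbar]]$. First I would record that, since $\chi^{n+1}=0$, the exponential $\mathrm{exp}(\hbar\chi)=\sum_{k=0}^{n}\frac{\hbar^k}{k!}\chi^k$ is a genuine finite sum, hence a well-defined element of $(H\otimes H)[[\hbar]]$, invertible with inverse $\mathrm{exp}(-\hbar\chi)$ because $\chi$ commutes with itself. As $\Rr$ is invertible, $\tilde{\Rr}$ is invertible with inverse $\mathrm{exp}(-\hbar\chi)\Rr^{-1}$. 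Throughout I would use the two elementary facts that any algebra homomorphism commutes with the (polynomial) exponential, and that $\mathrm{exp}(a+b)=\mathrm{exp}(a)\mathrm{exp}(b)$ whenever $a$ and $b$ commute.

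For \eqref{qtr1}, the key observation is that \eqref{cqtr1} says $\chi$ commutes with $\Delta(h)$ for every $h\in H$; consequently so does every power $\chi^k$, and therefore $\mathrm{exp}(\hbar\chi)$ commutes with $\Delta(h)$. Using that $\Rr$ is already quasi-cocommutative, one then gets
\[
\tilde{\Rr}\Delta(h)\tilde{\Rr}^{-1}=\Rr\,\mathrm{exp}(\hbar\chi)\Delta(h)\mathrm{exp}(-\hbar\chi)\Rr^{-1}=\Rr\Delta(h)\Rr^{-1}=\Delta^{\mathrm{op}}(h),
\]
so \eqref{qtr1} holds for $\tilde{\Rr}$.

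The heart of the argument is \eqref{qtr2} (and, symmetrically, \eqref{qtr3}). Since $\mathrm{Id}\otimes\Delta$ is an algebra homomorphism, I would compute
\[
(\mathrm{Id}\otimes\Delta)(\tilde{\Rr})=(\mathrm{Id}\otimes\Delta)(\Rr)\cdot\mathrm{exp}\big(\hbar(\mathrm{Id}\otimes\Delta)(\chi)\big)=\Rr_{13}\Rr_{12}\,\mathrm{exp}\big(\hbar\chi_{12}+\hbar\Rr^{-1}_{12}\chi_{13}\Rr_{12}\big),
\]
using \eqref{qtr2} for $\Rr$ and \eqref{cqtr2} for $\chi$. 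Here hypothesis $1)$ enters decisively: it asserts precisely that $\chi_{12}$ and $\Rr^{-1}_{12}\chi_{13}\Rr_{12}$ commute, so the exponential of their sum factorizes, and choosing the order that leaves the conjugating factor outermost gives
\[
\mathrm{exp}\big(\hbar\chi_{12}+\hbar\Rr^{-1}_{12}\chi_{13}\Rr_{12}\big)=\Rr^{-1}_{12}\,\mathrm{exp}(\hbar\chi_{13})\,\Rr_{12}\,\mathrm{exp}(\hbar\chi_{12}),
\]
where I have also used that conjugation by $\Rr_{12}$ is an algebra automorphism, hence commutes with $\mathrm{exp}$. Substituting, the factors $\Rr_{12}\Rr^{-1}_{12}$ cancel and, since $\mathrm{exp}(\hbar\chi_{1j})=\mathrm{exp}(\hbar\chi)_{1j}$, one obtains
\[
(\mathrm{Id}\otimes\Delta)(\tilde{\Rr})=\Rr_{13}\,\mathrm{exp}(\hbar\chi)_{13}\,\Rr_{12}\,\mathrm{exp}(\hbar\chi)_{12}=\tilde{\Rr}_{13}\tilde{\Rr}_{12},
\]
which is \eqref{qtr2}. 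The verification of \eqref{qtr3} is identical after relabelling, replacing \eqref{qtr2}, \eqref{cqtr2} and hypothesis $1)$ by \eqref{qtr3}, \eqref{cqtr3} and hypothesis $2)$, and the indices $12,13$ by $23,13$.

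I do not expect a genuine obstacle: the proof reduces to manipulating the finite exponential. The only delicate points are bookkeeping, namely getting the order of factors in the factorization of $\mathrm{exp}(a+b)$ right so that the conjugating $\Rr_{12}$ (respectively $\Rr_{23}$) cancels against the $\Rr_{12}$ (respectively $\Rr_{23}$) coming from the hexagon axiom for $\Rr$, and making sure the identities $\mathrm{exp}(\hbar\chi)_{1j}=\mathrm{exp}(\hbar\chi_{1j})$ together with the algebra-map and conjugation compatibilities of $\mathrm{exp}$ are invoked correctly. Hypotheses $1)$ and $2)$ are exactly what is needed to split the two exponentials, and without them the argument would break down at this step.
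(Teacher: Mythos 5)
Your proposal is correct and follows essentially the same route as the paper's proof: establish $\tilde{\Rr}^{-1}=\mathrm{exp}(-\hbar\chi)\Rr^{-1}$, get \eqref{qtr1} from the fact that \eqref{cqtr1} makes $\mathrm{exp}(\hbar\chi)$ commute with $\Delta(h)$, and prove \eqref{qtr2}, \eqref{qtr3} by applying the algebra map $\mathrm{Id}\otimes\Delta$ (resp. $\Delta\otimes\mathrm{Id}$), invoking the hexagon axioms and \eqref{cqtr2}, \eqref{cqtr3}, and then using hypotheses $1)$, $2)$ to split $\mathrm{exp}$ of the sum of the two commuting terms so that conjugation by $\Rr_{12}$ (resp. $\Rr_{23}$) cancels. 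The paper carries out the identical computation, citing the same commuting-exponentials identity (as \cite[XVI.4 (4.6)]{Ka}) and the same identification $(\chi^{k})_{13}=(\chi_{13})^{k}$ that you phrase as $\mathrm{exp}(\hbar\chi)_{1j}=\mathrm{exp}(\hbar\chi_{1j})$.
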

    
\begin{proof}
We show that $\tilde{\Rr}$ is a quasitriangular structure for $H[[\hbar]]$ checking \eqref{qtr1}, \eqref{qtr2} and \eqref{qtr3}, by using \eqref{qtr1}, \eqref{qtr2}, \eqref{qtr3} for $\Rr$ and \eqref{cqtr1}, \eqref{cqtr2}, \eqref{cqtr3} for $\chi$. First observe that $\tilde{\Rr}^{-1}=\mathrm{exp}(-\hbar\chi)\Rr^{-1}$. Indeed, $\tilde{\Rr}\tilde{\Rr}^{-1}=\Rr\mathrm{exp}(\hbar\chi)\mathrm{exp}(-\hbar\chi)\Rr^{-1}=\Rr\Rr^{-1}=1$ and, similarly, $\tilde{\Rr}^{-1}\tilde{\Rr}=1$. Moreover, we can compute
\[
\begin{split}
\tilde{\Rr}\Delta(\cdot)&=\Rr(1\otimes1+\hbar\chi+\cdots+\frac{\hbar^{n}}{n!}\chi^n)\Delta(\cdot)\overset{\eqref{cqtr1}}{=}\Rr(\Delta(\cdot)+\hbar\Delta(\cdot)\chi+\cdots+\frac{\hbar^{n}}{n!}\Delta(\cdot)\chi^n)\\&=\Rr\Delta(\cdot)(1\otimes1+\hbar\chi+\cdots+\frac{\hbar^{n}}{n!}\chi^n)\overset{\eqref{qtr1}}{=}\Delta^{\mathrm{op}}(\cdot)\Rr(1\otimes1+\hbar\chi+\cdots+\frac{\hbar^n}{n!}\chi^n)\\&=\Delta^{\mathrm{op}}(\cdot)\tilde{\Rr}
\end{split}
\]
and so \eqref{qtr1} is verified. Furthermore, we obtain
\[
\begin{split}
(\mathrm{Id}\otimes\Delta)(\tilde{\Rr})&=(\mathrm{Id}\otimes\Delta)(\Rr)(\mathrm{Id}\otimes\Delta)(1\otimes1+\hbar\chi+\cdots+\frac{\hbar^n}{n!}\chi^n)\\&\overset{\eqref{qtr2},\eqref{cqtr2}}{=}\Rr_{13}\Rr_{12}(1\otimes1\otimes1+\hbar(\chi_{12}+\Rr^{-1}_{12}\chi_{13}\Rr_{12})+\cdots+\frac{\hbar^n}{n!}(\chi_{12}+\Rr^{-1}_{12}\chi_{13}\Rr_{12})^{n})\\&=\Rr_{13}\Rr_{12}\mathrm{exp}(\hbar(\chi_{12}+\Rr^{-1}_{12}\chi_{13}\Rr_{12}))\overset{(*)}{=}\Rr_{13}\Rr_{12}\exp(\hbar\Rr^{-1}_{12}\chi_{13}\Rr_{12})\mathrm{exp}(\hbar\chi_{12})\\&=\Rr_{13}\Rr_{12}\Rr^{-1}_{12}\mathrm{exp}(\hbar\chi_{13})\Rr_{12}\mathrm{exp}(\hbar\chi_{12})=\Rr_{13}\mathrm{exp}(\hbar\chi_{13})\Rr_{12}\mathrm{exp}(\hbar\chi_{12})\\&=\Rr_{13}(1\otimes1\otimes1+\hbar\chi_{13}+\cdots+\frac{\hbar^n}{n!}(\chi_{13})^{n})\Rr_{12}(1\otimes1\otimes1+\hbar\chi_{12}+\cdots+\frac{\hbar^n}{n!}(\chi_{12})^{n})\\&=\Rr_{13}(1\otimes1\otimes1+\hbar\chi_{13}+\cdots+\frac{\hbar^n}{n!}(\chi^n)_{13})\Rr_{12}(1\otimes1\otimes1+\hbar\chi_{12}+\cdots+\frac{\hbar^n}{n!}(\chi^n)_{12})\\&=\tilde{\Rr}_{13}\tilde{\Rr}_{12},
\end{split}
\]
where $(*)$ follows by 1) and \cite[XVI.4 (4.6)]{Ka}. Thus, also \eqref{qtr2} is satisfied. Finally, we also have
\[
\begin{split}
    (\Delta\otimes\mathrm{Id})(\tilde{\Rr})&=(\Delta\otimes\mathrm{Id})(\Rr)(\Delta\otimes\mathrm{Id})(1\otimes1+\hbar\chi+\cdots+\frac{\hbar^n}{n!}\chi^n)\\&\overset{\eqref{qtr3},\eqref{cqtr3}}{=}\Rr_{13}\Rr_{23}(1\otimes1\otimes1+\hbar(\chi_{23}+\Rr^{-1}_{23}\chi_{13}\Rr_{23})+\cdots+\frac{\hbar^n}{n!}(\chi_{23}+\Rr^{-1}_{23}\chi_{13}\Rr_{23})^{n})\\&=\Rr_{13}\Rr_{23}\mathrm{exp}(\hbar(\chi_{23}+\Rr^{-1}_{23}\chi_{13}\Rr_{23}))\overset{(*')}{=}\Rr_{13}\Rr_{23}\exp(\hbar\Rr^{-1}_{23}\chi_{13}\Rr_{23})\mathrm{exp}(\hbar\chi_{23})\\&=\Rr_{13}\Rr_{23}\Rr^{-1}_{23}\mathrm{exp}(\hbar\chi_{13})\Rr_{23}\mathrm{exp}(\hbar\chi_{23})=\Rr_{13}\mathrm{exp}(\hbar\chi_{13})\Rr_{23}\mathrm{exp}(\hbar\chi_{23})\\&=\Rr_{13}(1\otimes1\otimes1+\hbar\chi_{13}+\cdots+\frac{\hbar^n}{n!}(\chi_{13})^{n})\Rr_{23}(1\otimes1\otimes1+\hbar\chi_{23}+\cdots+\frac{\hbar^n}{n!}(\chi_{23})^{n})\\&=\Rr_{13}(1\otimes1\otimes1+\hbar\chi_{13}+\cdots+\frac{\hbar^n}{n!}(\chi^n)_{13})\Rr_{23}(1\otimes1\otimes1+\hbar\chi_{23}+\cdots+\frac{\hbar^n}{n!}(\chi^n)_{23})\\&=\tilde{\Rr}_{13}\tilde{\Rr}_{23}
\end{split}
\]
where $(*')$ follows by 2) and \cite[XVI.4 (4.6)]{Ka}. Hence \eqref{qtr3} holds true.
\end{proof}

Now we apply Proposition \ref{prop:quant} to our examples.

\begin{corollary}
    Given the pre-Cartier structure $\chi_\alpha=\alpha x\otimes xg$, with $\alpha\in\Bbbk$, for the triangular Hopf algebra $(A_{C_{2}^n},\Rr_{a})$, where $\Rr_{a}$ is defined as in \eqref{eq:Ra}, we obtain a quasitriangular structure 
\[   
    \Rr_{a}\mathrm{exp}(\hbar\chi_{\alpha})=\Rr_{a}(1\otimes1+\hbar\chi_{\alpha})
\]    
on $A_{C_{2}^n}[[\hbar]]$.
\end{corollary}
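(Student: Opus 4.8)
The plan is to apply Proposition~\ref{prop:quant} to the pre-Cartier quasitriangular Hopf algebra $(A_{C_{2}^n},\Rr_a,\chi_\alpha)$, whose pre-Cartier structure is furnished by Proposition~\ref{AC2-n}. I therefore have to verify the two standing hypotheses of Proposition~\ref{prop:quant} (taking $\Rr=\Rr_a$): that $\chi_\alpha$ is nilpotent, say $\chi_\alpha^{N+1}=0$ for some $N\geq1$, and that the commutation relations 1) and 2) hold.

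For the nilpotency I would use $x^{2}=0$ to compute
\[
\chi_\alpha^{2}=\alpha^{2}(x\otimes xg)(x\otimes xg)=\alpha^{2}\,x^{2}\otimes(xg)^{2}=0,
\]
so that Proposition~\ref{prop:quant} applies with $N=1$ and $\mathrm{exp}(\hbar\chi_\alpha)=1\otimes1+\hbar\chi_\alpha$; this already yields the closed form appearing in the statement.

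The heart of the argument is the verification of 1) and 2), and here the key is a structural remark rather than the explicit shape \eqref{eq:Ra} of $\Rr_a$. Writing $A_{C_{2}^n}=A_{0}\oplus xA_{0}$ with $A_{0}$ the span of the group-likes, every element of $A_{C_{2}^n}$ is a combination of group-likes and of $x\cdot(\text{group-like})$; since $x$ commutes or anticommutes with each group-like and $x^{2}=0$, one gets $x\,a\,x=0$ and $(xg)\,a\,(xg)=(x(ga)x)g=0$ for \emph{every} $a\in A_{C_{2}^n}$. Now write $\Rr_a=\sum_i u_i\otimes v_i$ and $\Rr_a^{-1}=\sum_j\bar u_j\otimes\bar v_j$. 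For 1), the first tensor leg of $\chi_{12}(\Rr_a)^{-1}_{12}\chi_{13}(\Rr_a)_{12}$ is a sum of terms $x\,\bar u_j\,x\,u_i=(x\bar u_j x)u_i=0$, while that of $(\Rr_a)^{-1}_{12}\chi_{13}(\Rr_a)_{12}\,\chi_{12}$ is a sum of $\bar u_j\,x\,u_i\,x=\bar u_j(xu_ix)=0$; hence both products vanish and 1) holds. Relation 2) follows identically, now inspecting the third tensor leg (both $\chi_{13}$ and $\chi_{23}$ carry $xg$ there) and invoking $(xg)\,a\,(xg)=0$; note that \eqref{cqtr2} and \eqref{cqtr3} are never used directly.

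Having checked all the hypotheses, Proposition~\ref{prop:quant} then delivers the quasitriangular structure $\tilde{\Rr}=\Rr_a\,\mathrm{exp}(\hbar\chi_\alpha)=\Rr_a(1\otimes1+\hbar\chi_\alpha)$ on $A_{C_{2}^n}[[\hbar]]$, as claimed. I do not expect a genuine obstacle: the only delicate step is 1) and 2), and once the vanishing $x\,a\,x=0=(xg)\,a\,(xg)$ is isolated, everything reduces to bookkeeping of the odd degree carried by $x$, so that no appeal to the precise form of $\Rr_a$ is required.
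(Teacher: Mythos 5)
Your proposal is correct and follows essentially the same route as the paper: both apply Proposition~\ref{prop:quant}, deduce $\chi_\alpha^2=0$ from $x^2=0$, and verify conditions 1) and 2) by showing that all four products vanish identically. The only difference is that the paper dismisses the vanishing as trivial, whereas you supply the underlying reason (namely $xax=0=(xg)a(xg)$ for all $a\in A_{C_2^n}$, since $x$ commutes or anticommutes with every group-like and $x^2=0$), which is a welcome amplification rather than a deviation.
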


\begin{proof}
Since $x^{2}=0$ then $\chi_{\alpha}^2=0$. Moreover, we trivially have
\[
\chi_{12}(\Rr^{-1}_{12}\chi_{13}\Rr_{12})=(\Rr^{-1}_{12}\chi_{13}\Rr_{12})\chi_{12}=\chi_{23}(\Rr^{-1}_{23}\chi_{13}\Rr_{23})=(\Rr^{-1}_{23}\chi_{13}\Rr_{23})\chi_{23}=0,
\] 
so Proposition \ref{prop:quant} applies.    
\end{proof}

\begin{corollary}
    Given the pre-Cartier structure $\chi=\sum_{p,q=1}^{n}{\gamma_{pq}gx_{p}\otimes x_{q}}$, with $\gamma_{pq}\in\Bbbk$, on the quasitriangular Hopf algebra $(E(n),\Rr)$, where $\Rr$ is defined as in \eqref{RstructureE(n)}, we obtain a quasitriangular structure 
\[    
    \Rr\mathrm{exp}(\hbar\chi)=\Rr(1\otimes1+\hbar\chi+\cdots+\hbar\chi^n)
\]    
    on $E(n)[[\hbar]]$.
\end{corollary}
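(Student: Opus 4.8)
The plan is to invoke Proposition \ref{prop:quant} for the triple $(E(n),\Rr,\chi)$ with $\chi=\sum_{p,q=1}^{n}\gamma_{pq}gx_p\otimes x_q$. Three things must be checked: that $\chi$ is nilpotent of the required order, and that the two commutation relations $1)$ and $2)$ of that proposition hold.

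First I would establish $\chi^{n+1}=0$. Expanding $\chi^{n+1}$ as a sum of terms of the form $\gamma_{p_1q_1}\cdots\gamma_{p_{n+1}q_{n+1}}(gx_{p_1})\cdots(gx_{p_{n+1}})\otimes x_{q_1}\cdots x_{q_{n+1}}$, the right-hand tensorand is a product of $n+1$ generators drawn from $x_1,\dots,x_n$. Since $x_i^2=0$ and $x_ix_j=-x_jx_i$, any such product containing a repeated index vanishes, and by the pigeonhole principle a repetition must occur. Hence every term is zero, so $\chi^{n+1}=0$, exactly matching the hypothesis of Proposition \ref{prop:quant} for this value of $n$.

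Next I would identify the conjugated terms explicitly. Because $\chi$ is an infinitesimal $\Rr$-matrix (Theorem \ref{prop:class-chi} together with Lemma \ref{lemma3}), it satisfies \eqref{cqtr2} and \eqref{cqtr3}. Computing $(\mathrm{Id}\otimes\Delta)(\chi)$ and $(\Delta\otimes\mathrm{Id})(\chi)$ from $\Delta(x_q)=x_q\otimes1+g\otimes x_q$, $\Delta(g)=g\otimes g$ and $g^2=1$ gives $(\mathrm{Id}\otimes\Delta)(\chi)=\chi_{12}+\sum_{p,q}\gamma_{pq}gx_p\otimes g\otimes x_q$ and $(\Delta\otimes\mathrm{Id})(\chi)=\chi_{23}+\sum_{p,q}\gamma_{pq}gx_p\otimes g\otimes x_q$. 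Comparing with \eqref{cqtr2} and \eqref{cqtr3} yields
\[
\Rr^{-1}_{12}\chi_{13}\Rr_{12}=\sum_{p,q}\gamma_{pq}gx_p\otimes g\otimes x_q=\Rr^{-1}_{23}\chi_{13}\Rr_{23},
\]
the same expression being obtainable directly from the swap relations of Lemma \ref{Rswap}. In particular both conjugates are available without any appeal to the explicit form of $R_A$, which is why the conclusion is independent of the chosen quasitriangular structure.

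Finally I would verify $1)$ and $2)$ by substituting these expressions together with $\chi_{12}=\sum_{p,q}\gamma_{pq}gx_p\otimes x_q\otimes1$ and $\chi_{23}=\sum_{p,q}\gamma_{pq}1\otimes gx_p\otimes x_q$, and multiplying component-wise in each tensor slot using $gx_i=-x_ig$, $x_ix_j=-x_jx_i$ and $g^2=1$. The one delicate point, and the main bookkeeping obstacle, is sign tracking: in each of the two products entering $1)$ (respectively $2)$) one sign arises from commuting a $g$ past an $x$ and a second from anticommuting the two relevant $x$'s, and these two sign changes cancel, so the two sides agree term by term. This confirms $1)$ and $2)$, and Proposition \ref{prop:quant} then delivers the quasitriangular structure $\tilde{\Rr}=\Rr\,\mathrm{exp}(\hbar\chi)$ on $E(n)[[\hbar]]$, giving an affirmative answer to the quantization problem for $E(n)$.
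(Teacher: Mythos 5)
Your proof is correct, and it verifies the two hypotheses of Proposition \ref{prop:quant} by a genuinely different route than the paper. The paper never identifies the conjugated elements $\Rr^{-1}_{12}\chi_{13}\Rr_{12}$ and $\Rr^{-1}_{23}\chi_{13}\Rr_{23}$ explicitly: it works purely with the sign relations $\Rr\chi=-(g\otimes g)\chi\Rr$ and $\chi\Rr^{-1}=-\Rr^{-1}(g\otimes g)\chi$ (imported from the proof of Lemma \ref{lemmaCartier}), combined with the commutation identities $\chi_{13}(g\otimes g\otimes1)\chi_{12}=(g\otimes g\otimes1)\chi_{12}\chi_{13}$ and $\chi_{13}(1\otimes g\otimes g)\chi_{23}=(1\otimes g\otimes g)\chi_{23}\chi_{13}$, so that conditions $1)$ and $2)$ follow by moving factors past each other. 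You instead solve for the conjugates from the pre-Cartier axioms \eqref{cqtr2} and \eqref{cqtr3} (legitimate, since $\chi$ is assumed to be an infinitesimal $\Rr$-matrix), finding that both equal $\sum_{p,q}\gamma_{pq}\,gx_p\otimes g\otimes x_q$; then $1)$ and $2)$ become concrete term-by-term computations in $E(n)^{\otimes 3}$, and your sign bookkeeping is accurate: in slot one, $(gx_p)(gx_r)=-x_px_r$ on one side versus $(gx_r)(gx_p)=-x_rx_p=x_px_r$ on the other, with the compensating sign coming from $x_qg=-gx_q$ (respectively $x_sx_q=-x_qx_s$ for condition $2)$). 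Your route buys transparency, pinning down the conjugate once and for all and making the independence of the choice of $R_A$ visible; the paper's route buys brevity, since it never needs to know what the conjugate actually is. You also prove $\chi^{n+1}=0$ explicitly by pigeonhole, a hypothesis of Proposition \ref{prop:quant} that the paper's proof leaves implicit (it is indeed immediate from $x_i^2=0$ and anticommutativity). Both arguments are sound.
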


\begin{proof}
    We just have to verify 1) and 2) of Proposition \ref{prop:quant}.
    As it is shown in the proof of Lemma \ref{lemmaCartier}, we have that
    \begin{align}\label{eq:ggchiR1}
    \Rr\chi&=-(\sum_{p,q=1}^{n}{\gamma_{p,q}x_{p}\otimes gx_{q}})\Rr=-(g\otimes g)\chi\Rr,\,\, \text{ and so }\\
\label{eq:ggchiR2}\chi\Rr^{-1}&=-\Rr^{-1}(g\otimes g)\chi.
    \end{align}
Moreover, we have
\begin{align}\label{eqchi13}
\chi_{13}(g\otimes g\otimes1)\chi_{12}&=-(g\otimes g\otimes1)\chi_{13}\chi_{12}=(g\otimes g\otimes 1)\chi_{12}\chi_{13},\,\, \text{ and }\\
\label{eqchi13'}
    \chi_{13}(1\otimes g\otimes g)\chi_{23}&=-(1\otimes g\otimes g)\chi_{13}\chi_{23}=(1\otimes g\otimes g)\chi_{23}\chi_{13}.
\end{align}
Then, we obtain
\[
(\Rr^{-1}_{12}\chi_{13}\Rr_{12})\chi_{12}\overset{\eqref{eq:ggchiR1}}{=}-\Rr^{-1}_{12}\chi_{13}(g\otimes g\otimes1)\chi_{12}\Rr_{12}\overset{\eqref{eqchi13}}{=}-\Rr^{-1}_{12}(g\otimes g\otimes1)\chi_{12}\chi_{13}\Rr_{12}\overset{\eqref{eq:ggchiR2}}{=}\chi_{12}(\Rr^{-1}_{12}\chi_{13}\Rr_{12})
\]
and 
\[
(\Rr^{-1}_{23}\chi_{13}\Rr_{23})\chi_{23}\overset{\eqref{eq:ggchiR1}}{=}-\Rr^{-1}_{23}\chi_{13}(1\otimes g\otimes g)\chi_{23}\Rr_{23}\overset{\eqref{eqchi13'}}{=}-\Rr^{-1}_{23}(1\otimes g\otimes g)\chi_{23}\chi_{13}\Rr_{23}\overset{\eqref{eq:ggchiR2}}{=}\chi_{23}(\Rr^{-1}_{23}\chi_{13}\Rr_{23}).
\]
\end{proof}

\noindent\textbf{Acknowledgements.} The authors would like to thank A.\@ Ardizzoni for meaningful suggestions regarding the classification of the infinitesimal $\Rr$-matrices for the Hopf algebras $E(n)$, M. Faitg, A. Gainutdinov and C. Schweigert for useful observations on the paper and A. Chirvăsitu, D. Ştefan and T.\@ Weber for nice discussions and valuable comments. 
This paper was written while LB and AS were members of the ``National Group for Algebraic and Geometric Structures and their Applications'' (GNSAGA-INdAM). FR would also like to extend his gratitude to the Department of Mathematics ``G. Peano'' of the University of Turin for the warm hospitality. All the authors were partially supported by the project funded by the European Union -NextGenerationEU under NRRP, Mission 4 Component 2 CUP D53D23005960006 - Call PRIN 2022 No.\, 104 of February 2, 2022 of Italian Ministry of University and Research; Project 2022S97PMY \textit{Structures for Quivers, Algebras and Representations (SQUARE).}

\end{document}